\theoremstyle:=definition,remark,plain\do{%
      \expandafter\g@addto@macro\csname th@\theoremstyle\endcsname{%
        \addtolength\thm@preskip\parskip
        }%
      }
  \newcommand\address[1]{}
  \newcommand\email[1]{}
  \newcommand\dedicatory[1]{}
  \theoremstyle{plain}
  \newtheorem{theorem}{Theorem}[section]
  \newtheorem{proposition}[theorem]{Proposition}
  \newtheorem{corollary}[theorem]{Corollary}
  \newtheorem{lemma}[theorem]{Lemma}
  \newtheorem{introthm}{Theorem}
  \theoremstyle{definition}
  \newtheorem{definition}[theorem]{Definition}
  \newtheorem{remark}[theorem]{Remark}
  \newtheorem*{claim*}{Claim}
  \newtheorem*{question*}{Questions}
  \newtheorem*{answer*}{Answer}
  \newtheorem*{application*}{Application}
  \renewcommand{\eqref}[1]{Equation~(\ref{Eq:#1})}
  \newcommand{\N}{\ensuremath{\mathbb{N}}\xspace}
  \newcommand{\Z}{\ensuremath{\mathbb{Z}}\xspace}
  \newcommand{\R}{\ensuremath{\mathbb{R}}\xspace}
  \newcommand{\Q}{\ensuremath{\mathbb{Q}}\xspace}
  \DeclareMathOperator{\Homeo}{Homeo}
  \DeclareMathOperator{\id}{id}
  \DeclareMathOperator{\image}{im}
  \DeclareMathOperator{\mcg}{MCG}
  \DeclareMathOperator{\pmcg}{PMCG}
  \DeclareMathOperator{\HH}{H}
  \DeclareMathOperator{\Sym}{Sym}
  \newcommand{\set}[1]{\ensuremath{\left\{ {#1} \right\}}\xspace} 
  \newcommand{\param}{{\mathchoice{\mkern1mu\mbox{\raise2.2pt\hbox{$
  \centerdot$}}
  \mkern1mu}{\mkern1mu\mbox{\raise2.2pt\hbox{$\centerdot$}}\mkern1mu}{
  \mkern1.5mu\centerdot\mkern1.5mu}{\mkern1.5mu\centerdot\mkern1.5mu}}}
\begin{document}


  \title    {Self-Similar Surfaces: Involutions and Perfection}
  \author   {\stepcounter{footnote}Justin Malestein\thanks{Partially supported by Simons
  Collaboration Grant 713006}\,\, and\, Jing Tao\thanks{Partially
  supported by NSF DMS-1651963.}}
  \date{}

  \maketitle
  \thispagestyle{empty}

  \begin{abstract} 
    We investigate the problem of when big mapping class groups are
    generated by involutions. Restricting our attention to the class of
    \emph{self-similar} surfaces, which are surfaces with self-similar ends
    space, as defined by Mann and Rafi, and with $0$ or infinite genus, we
    show that, when the set of maximal ends is infinite, then the mapping
    class groups of these surfaces are generated by involutions, normally
    generated by a single involution, and uniformly perfect. In fact, we
    derive this statement as a corollary of the corresponding statement for
    the homeomorphism groups of these surfaces. On the other hand, among
    self-similar surfaces with one maximal end, we produce infinitely many
    examples in which their big mapping class groups are neither perfect
    nor generated by torsion elements. These groups also do not have the
    automatic continuity property.
  \end{abstract}

\section{Introduction}

  Consider a connected and oriented surface $\Sigma$. We distinguish two
  types of surfaces, those of finite type, i.e.\ a closed surface minus
  finitely many points, or of infinite type otherwise. Let $G(\Sigma)$ be
  either the group $\Homeo^+(\Sigma)$ of orientation preserving
  self-homeomorphisms of $\Sigma$ or the mapping class group $\mcg(\Sigma)$
  of $\Sigma$. We are interested in the algebraic structure of $G(\Sigma)$,
  especially when $\Sigma$ has infinite type. 

  As a topological group, equipped with the compact open topology,
  $\Homeo^+(\Sigma)$ is a non-locally-compact Polish group. $\mcg(\Sigma)$,
  being a quotient of $\Homeo^+(\Sigma)$, inherits a topology. When
  $\Sigma$ has finite type, then this topology is discrete and
  $\mcg(\Sigma)$ is finitely presented. But when $\Sigma$ has infinite
  type, then $\mcg(\Sigma)$ is also a non-locally-compact Polish group,
  similar to the homeomorphism group. In particular, $\mcg(\Sigma)$ is not
  countably generated, justifying the nomenclature of \emph{big mapping
  class group} in the literature. 
  
  An obvious group-theoretic problem is to identify canonical generating
  sets for $G(\Sigma)$. For any group, a natural choice is its set of
  involutions, or more broadly, its set of torsion elements. This leads us
  to ask if $G(\Sigma)$ is generated by involutions (or torsion elements).
  (The set of Dehn twists, being countable, can never generate a big
  mapping class group; and often, they do not even topologically generate
  \cite{APV20}.) For finite type surfaces, this question is well studied
  for their mapping class groups; see \cite{MP87, Luo00, BF04, Kas03,
  Kor20, LM21, Yil20} and the references within for the story on generating
  by involutions. The goal of this paper is to explore this question for
  surfaces of infinite type. 
  
  To answer this question for all surfaces of infinite type should be
  challenging, as $G(\Sigma)$ is as complicated as the homeomorphism group
  of the \emph{ends space} of $\Sigma$. In trying to tame the world of
  surfaces of infinite type, Mann and Rafi \cite{MR21} introduced a
  preorder on an ends space, and showed that the induced partial order
  always has maximal elements. They also introduced the notion of
  self-similar ends spaces. We call a surface self-similar if it has a
  self-similar ends space and $0$ or infinite genus. Among these, we
  identity a subclass, called \emph{uniformly} self-similar, which are
  self-similar with infinitely many maximal ends. This subclass, which is
  uncountable, exhibits additional symmetry, to which the sphere minus a
  Cantor set belongs. It was observed by Calegari \cite{Cal09} that the
  mapping class group of the sphere minus a Cantor set is uniformly
  perfect. We extend this result to all uniformly self-similar surfaces,
  along with answering the generation problem by involutions for these
  surfaces. Our main theorem is the following.
  
  \begin{introthm} \label{introthm1}
   
    Let $\Sigma$ be a uniformly self-similar surface and $G(\Sigma)$ be
    either $\Homeo^+(\Sigma)$ or $\mcg(\Sigma)$. Then $G(\Sigma)$ is
    generated by involutions, normally generated by a single involution,
    and uniformly perfect. Moreover, each element of $G(\Sigma)$ is a
    product of at most 3 commutators and 12 involutions. 
    
  \end{introthm}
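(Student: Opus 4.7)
My plan is to first reduce the entire theorem to the case $G(\Sigma) = \Homeo^+(\Sigma)$, since $\mcg(\Sigma)$ is a quotient and every property asserted (generation by involutions, normal generation by one involution, bounded commutator length, bounded involution length) descends to quotients.

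The technical heart is to extract from uniform self-similarity a concrete ``infinite shift'' structure on $\Sigma$. Because the set of maximal ends is infinite and the ends space is self-similar, I expect to construct (i) a $\Z$-indexed family of pairwise disjoint subsurfaces $\{\Sigma_n\}_{n\in\Z}$, each homeomorphic to $\Sigma$ with an open disk removed, (ii) a shift $s\in\Homeo^+(\Sigma)$ with $s(\Sigma_n)=\Sigma_{n+1}$, and (iii) an involution $\tau\in\Homeo^+(\Sigma)$ pairing $\Sigma_n\leftrightarrow\Sigma_{1-n}$. Existence of this data is precisely where the hypothesis of infinitely many maximal ends combined with self-similarity enters: the maximal ends give points that can be cyclically shifted, and self-similarity lets the complementary topology be distributed evenly among the $\Sigma_n$.

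Given this setup, the Mather--Anderson swindle handles elements of small support: for any $g\in\Homeo^+(\Sigma)$ supported in $\Sigma_1$, the infinite product $g_\infty = \prod_{n\ge 0}s^{n}gs^{-n}$ is well-defined because the supports $s^n\Sigma_1=\Sigma_{n+1}$ are pairwise disjoint, and the telescoping identity $g_\infty = g\cdot(sg_\infty s^{-1})$ gives $g = [g_\infty,s]$. To extend this to arbitrary elements, I would prove a fragmentation lemma: every $g\in\Homeo^+(\Sigma)$ factors as a product of three elements, each conjugate into a single subsurface $\Sigma_n$. Roughly, one factor should absorb the permutation of maximal ends induced by $g$, and one each should handle the ``positive'' and ``negative'' halves of the shift. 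This gives commutator length at most three, and hence uniform perfection with bound $3$.

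For the bound of $12$ involutions, I would exploit $\tau$: any element of the form $y\cdot(\tau y^{-1}\tau^{-1})$ with $y$ supported in $\Sigma_1$ is inverted by conjugation by $\tau$, so it equals $\tau\cdot(y\tau)$, a product of two involutions. Each of the three commutators from the previous step can be rewritten via further shifts as a product of four involutions of this symmetric type, yielding $12$ involutions overall. Normal generation by the single involution $\tau$ should then follow because uniform self-similarity makes all the involutions appearing in this decomposition conjugate in $\Homeo^+(\Sigma)$ to either $\tau$ or a short product of $\tau$-conjugates. The main obstacle I anticipate is the fragmentation lemma: extracting from the Mann--Rafi definition a decomposition of an arbitrary homeomorphism into a \emph{uniformly bounded} number of pieces supported in single shift-translates, while bookkeeping the action on the maximal ends. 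The infinite-product swindle and the involution-swap identity become routine once the shift--swap data $(s,\tau,\{\Sigma_n\})$ is in hand.
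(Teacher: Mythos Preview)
Your architecture matches the paper's exactly: reduce to $\Homeo^+(\Sigma)$, build a $\Z$-shift structure out of uniform self-similarity, apply the swindle to get commutator length one for elements supported in a single translate, then fragment an arbitrary element into three such pieces and count. Two places deserve sharpening.

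First, the shift itself must be built as a product of two conjugate involutions; this is what drives both the involution bound and normal generation by a single involution. The paper does this by taking two ``rotations'' $\tau,\sigma$ of the bi-infinite cylinder model about adjacent axes, so that $\varphi=\sigma\tau$ shifts by two; then each commutator $[\hat f,\varphi^{-1}]$ is literally $(\hat f\varphi^{-1}\hat f^{-1})\varphi$, a product of two conjugates of $\varphi$, hence four conjugates of $\tau$. Your paragraph about elements of the form $y\cdot(\tau y^{-1}\tau^{-1})$ does not quite close this loop, since the swindle hands you $[g_\infty,s]$ with $s$ of infinite order, not $\tau$.

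Second, and more substantively, your fragmentation heuristic (``one factor absorbs the permutation of maximal ends, one each for the positive and negative halves'') is not how the paper gets the bound of three, and it is not clear it can be made to work: the action on the Cantor set of maximal ends is not obviously correctable by a single half-space-supported map. The paper instead tracks only where a \emph{single} half-space $H$ goes under $f$. The key lemma is that for any two half-spaces $H_1,H_2$, one of $H_1\cap H_2^c$ or $H_1^c\cap H_2^c$ contains a half-space; from this one moves $f(H)$ back to $H$ by at most two half-space-supported maps, and a third corrects the complement. This is the step you correctly flagged as the main obstacle, and the half-space-intersection lemma is the missing idea.
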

  
  Note that the case of $\mcg(\Sigma)$ follows immediately from that of
  $\Homeo^+(\Sigma)$. For the latter case, we use a method akin to
  \emph{fragmentation}, a well known tool in the study of homeomorphism
  groups. In more detail, we first observe that a uniformly self-similar
  ends space $E$ behaves very much like a Cantor set. Namely, any clopen
  subset $U \subset E$ containing a proper subset of the maximal ends is
  homeomorphic to its complement $U^c$. This gives rise to the notion of a
  \emph{half-space} in a uniformly self-similar surface $\Sigma$, which is
  a subsurface $H \subset \Sigma$ with a single connected, compact boundary
  component, such that $\overline{H^c}$ is also a half-space and
  homeomorphic to $H$. (The exact definition is different and appears as
  Definition \ref{def:halfspaces}). We then find an
  $H$--\emph{translation}, that is, a homeomorphism $\phi$ such that
  $\set{\phi^n(H)}_{n \in \Z}$ are all disjoint. This is a key step in the
  proof and requires putting the surface $\Sigma$ into a particular form
  that reflects its symmetry. By our construction, the $H$--translation
  $\phi$ is a product of two conjugate involutions. Then, using a standard
  trick, we write every $f \in \Homeo(H,\partial H)$ as a commutator of the
  form $f=[\hat{f},\phi]$ for some $\hat{f}$. The final step is  to show
  $\Homeo^+(\Sigma)$ is the normal closure of $\Homeo(H,\partial H)$, and
  so it is normally generated by $\phi$ and hence by a single involution.
  The other statements are achieved by keeping track of the number of
  commutators or involutions needed at each step. 
  
  Many of our steps above carry over to the case of equipping $\Sigma$ with
  a marked point $\ast$. The key difference is now one can find a curve
  $\alpha \subset \Sigma$ which is not contained in any half-space $H$ of
  $\Sigma$. Thus, it is no longer immediate that the Dehn twist about
  $\alpha$ can be generated by elements supported on $H$ and their
  conjugates. To deal with this issue, we invoke the lantern relation.
  Using self-similarity, we can find an appropriate lantern, i.e.\ a
  4-holed sphere bounding $\alpha$ with the other boundary components lying
  in half-spaces. Once we get all Dehn twists, then combining our previous
  method together with the fact that mapping class groups of compact
  surfaces are generated by Dehn twists, we obtain the following theorem.
  
  \begin{introthm} \label{introthm2}

    Let $\Sigma$ be a uniformly self-similar surface with a marked point
    $\ast \in \Sigma$. Then, $\mcg(\Sigma, \ast)$ is perfect, generated by
    involutions, and normally generated by a single involution. 

  \end{introthm}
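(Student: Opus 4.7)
The plan is to adapt the proof of Theorem A to the marked-point setting, addressing the one new obstruction — Dehn twists about curves that cannot be pushed into any half-space avoiding $\ast$ — via the lantern relation. First, I would reproduce the construction of a half-space $H \subset \Sigma$ with $\ast \notin H$ and an $H$-translation $\phi = \iota_1 \iota_2$ from the proof of Theorem A, further arranging for $\iota_1, \iota_2$ to fix $\ast$. This is possible because $\Sigma$ has infinitely many maximal ends, leaving room to position $\ast$ in a residual region fixed pointwise by $\phi$. Then, exactly as in Theorem A, every $f \in \mcg(\Sigma, \ast)$ supported on $H$ satisfies $f = [\hat f, \phi]$ for a suitable $\hat f$, so every such $f$ lies in the normal closure of the single involution $\iota_1$.

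Next I would show that every Dehn twist $T_\alpha \in \mcg(\Sigma, \ast)$ lies in the normal closure of $\iota_1$, splitting into two cases. In the \emph{good case}, $\alpha$ is contained in some half-space $H' \subset \Sigma$ with $\ast \notin H'$; then $T_\alpha$ is supported on $H'$ and, after conjugating the $H$-translation picture, lies in the normal closure of $\iota_1$ by the previous paragraph. In the \emph{bad case}, $\alpha$ separates $\ast$ from maximal ends on both sides. Here I would build a 4-holed sphere $L \subset \Sigma$ containing $\ast$ with $\partial L = \alpha \sqcup \beta_1 \sqcup \beta_2 \sqcup \beta_3$, where each $\beta_i$ bounds a half-space on the side away from $\ast$, chosen carefully so that the three interior curves $x_{12}, x_{13}, x_{23}$ of the lantern also bound half-spaces not containing $\ast$. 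The lantern relation
\[
  T_\alpha \;=\; T_{x_{12}} T_{x_{13}} T_{x_{23}} \, T_{\beta_3}^{-1} T_{\beta_2}^{-1} T_{\beta_1}^{-1}
\]
then expresses $T_\alpha$ as a product of good Dehn twists, returning to the first case.

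Finally, I would combine these with Theorem A's fragmentation argument to handle an arbitrary $g \in \mcg(\Sigma, \ast)$. Fragmentation writes $g = g_1 g_0$, where $g_1$ is a bounded product of conjugates of elements of $\Homeo(H', \partial H')$ for half-spaces $H' \not\ni \ast$, and $g_0$ is supported on a compact subsurface $K \ni \ast$. The element $g_1$ is handled as in Theorem A, while $g_0 \in \mcg(K, \ast)$ is a product of finitely many Dehn twists, each in the normal closure of $\iota_1$ by the previous step. Normal generation by a single involution, generation by involutions, and perfection then follow by bookkeeping.

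The hardest step is the bad-case lantern construction: the six curves $\beta_1, \beta_2, \beta_3, x_{12}, x_{13}, x_{23}$ must simultaneously bound half-spaces of $\Sigma$ not containing $\ast$. I expect this to require the full strength of uniform self-similarity — specifically, that the ends space can be partitioned into three self-similar clopen pieces each carrying infinitely many maximal ends, and that the union of two half-spaces joined by a pair of pants is again topologically a half-space in $\Sigma$. Without infinitely many maximal ends, such a symmetric tripartite decomposition need not exist, and the interior curves of the lantern may fail to be good.
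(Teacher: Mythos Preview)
Your overall plan mirrors the paper's proof: reproduce the $H$-translation $\varphi$ and involution $\tau$ so that both fix $\ast$ (the paper arranges this explicitly at the end of Lemma~\ref{lem:involhavetrans}), apply the swindle to get $\mcg(H,\partial H)$ into the normal closure of $\tau$, reduce an arbitrary element to Dehn twists via a fragmentation argument, and handle the recalcitrant Dehn twists by a lantern relation. Where your proposal departs from the paper, however, it contains a genuine error.

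The problem is your lantern. In the true bad case, $\alpha$ bounds a subsurface $S_1$ containing $\ast$ and \emph{no} maximal ends, with all maximal ends lying in the complementary piece $S_2$. (Your phrase ``$\alpha$ separates $\ast$ from maximal ends on both sides'' does not describe this situation; if both sides of $\alpha$ carried maximal ends, the unmarked side would already be a half-space and $T_\alpha$ would lie in its mapping class group --- you would be in the good case.) You then place the $4$-holed sphere $L$ so that $\ast \in L$ and $\alpha \subset \partial L$. Since $\ast \in S_1$, this forces $L \subset S_1$. But then each $\beta_i$ bounds, on the side away from $L$, a region contained in $S_1$, which carries no maximal ends and hence cannot be a half-space of $\Sigma$. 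So none of the six auxiliary curves $\beta_i, x_{ij}$ is ``good'' in your sense, and the reduction collapses. There is a second, independent obstruction: with $\ast \in L$, the lantern relation is a relation in $\mcg(L)$, not in $\mcg(L,\ast)$, and the discrepancy under the Birman sequence is a point-pushing class which you have no reason to expect to be trivial.

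The paper's Lemma~\ref{lemma:haveDTs} avoids both issues by building the lantern on the $S_2$ side, so that $\ast \notin L$: one chooses three disjoint half-spaces $H_1, H_2, H_3 \subset S_2$, takes $L$ to be a regular neighborhood of $\alpha \cup \partial H_1 \cup \partial H_2$ together with connecting arcs (all in the complement of $S_1$), and then checks that each of the four boundary curves and each of the three interior curves of $L$ bounds an unmarked half-space. Your final fragmentation step is also somewhat coarser than the paper's Theorem~\ref{thm:unmarkedhalfspacegen} --- there one first makes $g$ the identity on a fixed unmarked half-space $H_1$ via half-space moves, then uses Dehn twists in a compact subsurface only to align a single curve bounding an annulus about $\ast$, and finishes with one more half-space-supported map --- but your version is in the same spirit and would likely go through once the lantern is placed on the correct side of $\alpha$.
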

  
  Because we used the lantern relation, our proof does not apply to the
  homeomorphism group. For a different argument that the mapping class
  group of the marked sphere minus a Cantor set is perfect, see
  \cite{Vla21}. Theorem \ref{introthm2} is sharp in the sense that we
  cannot expect a statement about uniform perfection or a bound on the
  number of involutions, due to the fact that the marked sphere minus a
  Cantor set provides a counterexample, by \cite{Bav16}.
  
  It is not possible for  all big mapping class groups to be generated by
  torsion elements or be perfect, even among the class of self-similar
  surfaces. One counterexample is the infinite genus surface with one end.
  This is a self-similar surface, but, by Domat and Dickmann \cite{DD21},
  the abelianization of its mapping class group contains
  $\bigoplus_{2^{\aleph_0}} \Q$ as a summand. 
  
  Using their results and a covering trick, we can show the mapping class
  group of the surface $\R^2 \setminus \N$ has similarly large
  abelianization. Note that this surface is also self-similar but not
  uniformly. On the other hand, using a method similar to our proof of
  Theorem \ref{introthm1}, we can also show $\mcg(\R^2 \setminus \N)$ is
  \emph{topologically} generated by involutions. Since any homomorphism
  from a Polish group to $\Z$ is always continuous, this makes its first
  cohomology group vanish, in contrast with homology.   

  \begin{introthm} \label{introthm3}

    The group $\mcg(\R^2 \setminus \N)$ surjects onto
    $\bigoplus_{2^{\aleph_0}} \Q$. In particular, it is not perfect or
    generated by torsion elements. On the other hand, $\mcg(\R^2 \setminus
    \N)$ is topologically generated by involutions, so $\HH^1(\mcg(\R
    \setminus \N),\Z)=0$. 

  \end{introthm}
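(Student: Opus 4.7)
I would prove the three assertions in turn, with the first requiring a covering argument and the remaining two following by more direct methods.

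\emph{Surjection onto $\bigoplus_{2^{\aleph_0}} \Q$.} My plan is to realize $\R^2 \setminus \N$ as a $\Z/2$-quotient of an infinite-genus surface. Take $L$ to be the double branched cover of $\R^2$ ramified at every point of $\N$; then $L$ is an infinite-genus surface with one end (the Loch Ness monster), and its hyperelliptic involution $\iota$ has $L/\iota \cong \R^2$ with branch set $\N$. Deleting the ramification points yields an unramified double cover $L \setminus \mathrm{Fix}(\iota) \to \R^2 \setminus \N$. Orbifold lifting produces a central short exact sequence $1 \to \langle \iota \rangle \to \mcg(L)^\iota \to \mcg(\R^2 \setminus \N) \to 1$, where $\mcg(L)^\iota$ denotes the centralizer. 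Domat--Dickmann \cite{DD21} provide a surjection $\Phi\colon \mcg(L) \twoheadrightarrow \bigoplus_{2^{\aleph_0}} \Q$ built from independent quasimorphisms attached to certain infinite families of separating curves in $L$. The plan is to choose these families in an $\iota$-symmetric fashion, so that $\Phi$ restricted to $\mcg(L)^\iota$ remains surjective onto a rank-$2^{\aleph_0}$ summand. Since $\iota$ is torsion and the target is torsion-free, $\Phi|_{\mcg(L)^\iota}$ descends along the quotient by $\langle \iota \rangle$ to produce the desired surjection on $\mcg(\R^2 \setminus \N)$. The main technical obstacle here is exhibiting uncountably many independent $\iota$-symmetric curve families so that the restriction of $\Phi$ genuinely remains of full rank; this should be possible because any curve family indexing a quasimorphism in the DD construction can be refined to an $\iota$-invariant one without changing its cohomology class.

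\emph{Non-perfection and absence of torsion generators.} This is immediate from the previous step, since any commutator or torsion element maps to zero in the nontrivial torsion-free abelian group $\bigoplus_{2^{\aleph_0}} \Q$.

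\emph{Topological generation by involutions and $\HH^1$-vanishing.} For this I would adapt the fragmentation strategy used in the proof of Theorem \ref{introthm1}. Although the ends space $\N \cup \{\infty\}$ has a single maximal end and is not uniformly self-similar, it retains the key property that every clopen neighborhood of $\infty$ is homeomorphic to the full ends space. Using this, I would choose a bi-infinite family $\{H_n\}_{n \in \Z}$ of pairwise disjoint compact subsurfaces of $\R^2 \setminus \N$, each a closed disk containing finitely many punctures, whose union exhausts every isolated end, together with a homeomorphism $\phi$ with $\phi(H_n) = H_{n+1}$ written as a product of two involutions $\sigma_2 \sigma_1$, modelled on the decomposition of the integer shift as a product of two reflections about $0$ and $\tfrac{1}{2}$. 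The standard commutator trick then expresses every homeomorphism supported in a single $H_n$ as $[\hat{f}, \phi]$, hence as a product of four involutions. Combining this with the classical fact that the mapping class group of a compact surface is generated by involutions, one sees that every compactly supported mapping class of $\R^2 \setminus \N$ is a product of involutions; since compactly supported classes are dense in $\mcg(\R^2 \setminus \N)$, the involutions topologically generate. For the cohomology consequence: any homomorphism from the Polish group $\mcg(\R^2 \setminus \N)$ to $\Z$ is automatically continuous and must vanish on torsion, hence on the closure of the involution subgroup, which is everything, giving $\HH^1(\mcg(\R^2 \setminus \N), \Z) = 0$. The main obstacle here is constructing the bi-infinite family $\{H_n\}$ and the translation $\phi$ honestly as a product of two involutions, given that $\N$ (unlike $\Z$) has a distinguished ``starting'' end; this will require a careful bi-infinite relabeling of the isolated ends together with an auxiliary involution mixing the two halves of a symmetric model.
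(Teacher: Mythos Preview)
Your covering-space idea for the first assertion matches the paper's, but you run it in the opposite direction. The paper constructs a \emph{lifting} homomorphism $\mcg(\Sigma_F,\ast)\to\mcg(\Sigma_{PL},\tilde\ast)\to\mcg(\Sigma_L,\tilde\ast)$ using elementary covering-space theory (checking that the characteristic subgroup $K=\ker(\pi_1(\Sigma_F)\to\Z/2)$ is $\mcg$-invariant), then observes that specific downstairs twists $T_{\alpha_n}^2$ lift to the Domat--Dickmann curves $T_{\gamma_n}$; the Birman exact sequence removes the marked point at the end. Your route instead asserts the short exact sequence $1\to\langle\iota\rangle\to\mcg(L)^\iota\to\mcg(\R^2\setminus\N)\to 1$ and descends. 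That sequence is a Birman--Hilden statement for an infinite-type surface; neither its well-definedness nor its exactness is automatic, and you have not justified it. (Also, the Domat--Dickmann classes arise in the abelianization, not from quasimorphisms.) Your approach can likely be made to work, but the paper's lifting argument sidesteps these issues entirely.

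There is a genuine gap in your argument for topological generation. You take the $H_n$ to be \emph{compact} punctured disks, all homeomorphic to one another since $\phi(H_n)=H_{n+1}$; in particular each contains the same finite number of punctures. The swindle then gives you $\mcg(H_0,\partial H_0)$ as products of involutions, but your bridge from there to ``every compactly supported mapping class'' invokes the claim that mapping class groups of compact surfaces are generated by involutions. This is false for surfaces with boundary: the relevant groups here are braid groups $B_k=\mcg(D_k,\partial D_k)$, which have infinite cyclic abelianization and contain no involutions at all. More fundamentally, a Dehn twist about a curve enclosing more punctures than $H_0$ contains cannot be conjugated into any $H_n$, so it is unclear how it enters the subgroup you have built.

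The paper avoids this by working with a \emph{non-compact} half-space: it models the surface as $\R^2\setminus\Z^2$, takes $\tau$ to be rotation by $\pi$ and $\phi$ the unit horizontal translation, and lets $H$ be one side of a properly embedded arc going to infinity, so that $H$ contains \emph{infinitely} many punctures. Then $[\phi,\tau]=\phi^2$ is an $H$-translation, the swindle gives $\mcg(H,\partial H)\subset\langle\langle\tau\rangle\rangle$, and now \emph{every} Dehn twist is conjugate to one supported in $H$ by a simple change of coordinates. The proof concludes via the sequence $1\to\pmcg(\Sigma)\to\mcg(\Sigma)\to\Sym(\Z^2)\to 1$: the Dehn twists topologically generate $\pmcg(\Sigma)$ by \cite{PV18}, and an explicit infinite product of transpositions in $\mcg(H,\partial H)$ handles the quotient. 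The essential repair to your outline is to replace the compact $H_n$ by a single half-space with infinitely many punctures.
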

  
  The statement of topological generation by involutions also extends to
  the mapping class group of the one-ended infinite genus surface
  $\Sigma_L$. Additionally, we can get infinitely many examples of surfaces
  whose mapping class groups have similarly large abelianization, by
  considering appropriate maps to $\Sigma_L$ or $\R^2 \setminus \N$. Many
  of these examples are self-similar but not uniformly. 
  
  Another application of our result beyond the ones mentioned is the
  \emph{automatic continuity property}. Recall a Polish group $G$ has the
  automatic continuity property if every homomorphism from $G$ to a
  separable topological group is necessarily continuous. The family of
  surfaces we construct also gives rise to a large family of homeomorphism
  groups or big mapping class groups that do not have this property. This
  gives some progress towards answering \cite[Question 2.4]{Man20}. We
  highlight the following examples and refer to Theorem \ref{thm:bigabel}
  and Corollary \ref{cor:bigabel} for the full technical statement.

  \begin{introthm} \label{introthm4}

    Let $\Sigma = S^2 \setminus E$, where $S^2$ is the 2-sphere and $E$ is
    a countable closed subset of the Cantor set homeomorphic to the ordinal
    $\omega^\alpha +1$, where $\alpha$ is a countable successor ordinal.
    Let $G(\Sigma)$ be either $\Homeo^+(\Sigma)$ or $\mcg(\Sigma)$. Then
    $G(\Sigma)$ is not perfect, is not generated by torsion elements, and
    does not have the automatic continuity property. 

  \end{introthm}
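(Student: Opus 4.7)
The plan is to construct a surjection $G(\Sigma) \twoheadrightarrow \bigoplus_{2^{\aleph_0}} \Q$ by transferring the Domat--Dickmann \cite{DD21} abelianization machinery from the one-ended infinite genus surface $\Sigma_L$ to $\Sigma = S^2 \setminus E$. Once this is in hand, non-perfection and the absence of torsion generators are immediate because $\bigoplus_{2^{\aleph_0}} \Q$ is abelian and torsion-free, and the failure of automatic continuity will follow from a cardinality argument on homomorphisms to $\Q$.

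The hypothesis that $\alpha = \beta + 1$ is a successor is the structural engine. Under this hypothesis, $\omega^{\alpha} + 1 = \omega^{\beta} \cdot \omega + 1$, so $E$ is canonically the one-point compactification of a countable disjoint union $\bigsqcup_n E_n$ with $E_n \cong \omega^{\beta} + 1$, the unique maximal end being the point at infinity. Removing finitely many $E_n$ from $E$ yields a set still homeomorphic to $E$, so $\Sigma$ admits a nested exhaustion by subsurface neighborhoods $\Sigma \supset \Sigma_1 \supset \Sigma_2 \supset \cdots$ of the maximal end, each $\Sigma_k$ a once-holed copy of $\Sigma$ with ends space again $\cong E$. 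This is the same formal setup as the exhaustion of $\Sigma_L$ by one-holed copies of $\Sigma_L$ exploited in \cite{DD21}, and as signaled in the introduction, the case $\alpha = 1$ of this scheme is already underlying the proof of Theorem~\ref{introthm3}. I would then run the Domat--Dickmann construction: for each cofinal choice of nested subsurfaces one produces a rational-valued flux homomorphism on $G(\Sigma)$, and for $2^{\aleph_0}$-many inequivalent cofinal choices (encoded, for example, by distinct non-principal ultrafilters on $\N$) the resulting homomorphisms are independent, assembling into the desired surjection onto $\bigoplus_{2^{\aleph_0}} \Q$.

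The main obstacle is verifying that the Domat--Dickmann flux is a genuine homomorphism on $G(\Sigma)$ in our setting: one must check that it is invariant under isotopy when $G = \mcg$, that it is well defined on all of $\Homeo^+(\Sigma)$ and not merely on the compactly supported subgroup, and that its value on a composition is the sum of the values on the factors when the factors can move mass between levels of the exhaustion in intricate ways. This verification leans on the fact that the consecutive shells $\Sigma_k \setminus \Sigma_{k+1}$ all have comparable topology, so that a well-defined rational difference can be read off; the successor hypothesis is essential here because without it the shells would not be mutually homeomorphic. Finally, for automatic continuity: any homomorphism from a Polish group with the automatic continuity property to $\Q$ (discrete) factors through a second-countable quotient and hence there are at most $2^{\aleph_0}$ such; however, $\bigoplus_{2^{\aleph_0}} \Q$ is a $\Q$-vector space of dimension $2^{\aleph_0}$ and therefore admits $2^{2^{\aleph_0}}$ distinct $\Q$-linear maps to $\Q$ via choices of values on a Hamel basis, each pulling back through our surjection to a distinct homomorphism $G(\Sigma) \to \Q$, and the resulting cardinality mismatch rules out automatic continuity for both $\Homeo^+(\Sigma)$ and $\mcg(\Sigma)$.
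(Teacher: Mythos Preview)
Your endgame---the cardinality argument for automatic continuity---matches the paper's exactly, and your identification of the successor hypothesis as supplying an immediate-predecessor class of ends is precisely what the paper exploits. But the core of your proposal has a genuine gap: you propose to run the Domat--Dickmann flux construction \emph{directly} on $\Sigma$, yet that flux is defined by counting genus transferred across a family of separating curves, and $\Sigma$ has genus zero. You never say what invariant replaces genus in your ``rational difference,'' and there is no evident candidate. The shells $\Sigma_k \setminus \Sigma_{k+1}$ are planar; any count of ends in a fixed equivalence class is unchanged by Dehn twists and can be scrambled by the action on ends, which factors through a perfect symmetric group. Invoking ultrafilters to produce $2^{\aleph_0}$ independent fluxes presupposes that a single flux exists, and that is the missing piece.

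The paper does not attempt a direct flux on a planar surface. Instead it proceeds in two steps. First (Proposition~\ref{prop:flute}), it transfers the Domat--Dickmann result from $\Sigma_L$ to $\Sigma_F = \R^2 \setminus \N$ via a \emph{covering} argument: an involution on $\Sigma_L$ exhibits it as a $2$-fold branched cover of $\R^2$, and after deleting branch points one gets a lifting homomorphism $\mcg(\Sigma_F,\ast) \to \mcg(\Sigma_{PL},\tilde\ast) \to \mcg(\Sigma_L,\tilde\ast)$; the point is that squares $T_{\alpha_n}^2$ of Dehn twists on $\Sigma_F$ lift to the twists $T_{\gamma_n}$ on $\Sigma_L$ that Domat--Dickmann show are independent in the abelianization. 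Second (Theorem~\ref{thm:bigabel}, case (2)), for the given $\Sigma$ one applies the \emph{forgetful} map that fills in every end except the maximal one $y$ and its immediate-predecessor class $E(x)$; since $E(x)$ is countably infinite with $y$ as its unique accumulation point, the target is $\Sigma_F$, and since mapping classes preserve equivalence classes of ends this induces a homomorphism $\mcg(\Sigma,\ast) \to \mcg(\Sigma_F,\ast)$ hitting the relevant Dehn twists. A Birman-sequence trick removes the marked point, and Corollary~\ref{cor:bigabel} then reads off all three conclusions (with $\Homeo^+(\Sigma)$ inheriting them as a group that surjects onto $\mcg(\Sigma)$).

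So the paper's route is: $\bigoplus_{2^{\aleph_0}}\Q$ for $\Sigma_L$ (quoted from \cite{DD21}) $\Rightarrow$ for $\Sigma_F$ (branched cover and lifting) $\Rightarrow$ for $\Sigma$ (forgetful map on ends). Your proposal tries to collapse these into a single direct construction on $\Sigma$ that, as written, does not go through because the genus-based flux has no analogue in the planar setting.
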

  
  One may wonder what happens in the case of positive genus, rather than
  $0$ or infinite genus. Our methods do not extend to these surfaces.
  However, for a surface $\Sigma$ obtained by removing a Cantor set from a
  surface of finite type, Calegari and Chen \cite{CC21} showed various
  results for $\mcg(\Sigma)$ including that it is generated by torsion.
  Additionally, Mann \cite{Man20} showed $G(\Sigma)$ has the automatic
  continuity property. It would be interesting to know if their techniques
  extend to uniformly self-similar ends spaces. We refer to their papers
  for more details.
  
  One may also wonder whether our results extend to other surfaces of
  infinite type. In \cite{Fie21}, using very similar methods that were
  developed independently and concurrently, Field, Patel, and Rasmussen
  proved analogues of some of the above results for other classes of
  surfaces. Specifically, for their class of surfaces, which are required
  to have locally CB mapping class group and infinitely many maximal ends
  among other minor conditions, they show that the commutator lengths of
  elements in the commutator subgroup are uniformly bounded above and
  $\HH_1(\mcg(\Sigma), \Z)$ is finitely generated. See \cite{Fie21} for
  precise statements.

  As many cases still remain open, we invite the reader to explore other
  classes of surfaces of infinite type which may verify the properties in
  Theorem \ref{introthm1} or admit an obstruction. It would also be
  interesting to find other natural generating sets for big mapping class
  groups or homeomorphism groups. Similar questions can also be asked for
  the homeomorphism groups of ends spaces. 

  Here is a brief outline of the paper. In Section \ref{sec:preliminaries},
  we introduce ends spaces and the classification of surfaces of infinite
  type. Following \cite{MR21}, we define self-similar ends spaces and
  surfaces and a partial order on ends spaces. We also observe some nice
  properties about self-similar ends spaces that lead to the definition of
  half-spaces in uniformly self-similar surfaces. The proof of Theorem
  \ref{introthm1} is contained in Section \ref{sec:genunmarked}, and the
  proof of Theorem \ref{introthm2} in Section \ref{sec:genmarked}. The two
  parts of Theorem \ref{introthm3} appear in Section \ref{sec:bigabel} as
  Proposition \ref{prop:flute} and Theorem \ref{thm:topgen}. Theorem
  \ref{introthm4} follows from Corollary \ref{cor:bigabel} as a special
  case.
 
 \paragraph{Acknowledgements} We would like to thank the anonymous referee for their helpful comments.

\section{Preliminaries}

  \label{sec:preliminaries}

  \subsection{Partial order on ends spaces}

  \label{sec:po}

  An \emph{ends space} is a pair $(E,F)$, where $E$ is a totally
  disconnected, compact, metrizable space and $F \subset E$ is a (possibly
  empty) closed subspace. For simplicity, we will often suppress the
  notation $F$, but by convention, all homeomorphisms of $E$ will be
  relative to $F$. For instance, to say $C\subset E$ is homeomorphic to $D
  \subset E$ means there is a homeomorphism from $(C,C\cap F)$ to $(D,D
  \cap F)$. We denote by $\Homeo(E,F)$ the group of homeomorphisms of $E$
  preserving $F$.
  
  The assumptions on $E$ imply it is homeomorphic to a closed subspace of
  the standard Cantor set (see \cite[Proposition 5]{Ric63}). We will often
  view $E$ as this subspace (and $F$ as a further closed subspace).

  \begin{definition}

    An ends space $(E,F)$ is called \emph{self-similar} if for any
    decomposition of $E = E_1 \sqcup E_2 \sqcup \cdots \sqcup E_n$ into pairwise
    disjoint clopen sets, then there exists some clopen set $D$ contained in
    some $E_i$ such that $(D,D\cap F)$ is homeomorphic to $(E,F)$. 
  
  \end{definition}

  Following \cite{MR21}, given an ends space $(E,F)$, define a preorder
  $\preceq$ on $E$ where for $x,y \in E$, we say $x \preceq y$ if every
  neighborhood $U$ of $y$ contains some homeomorphic copy of a neighborhood
  $V$ of $x$. \textit{Here and throughout the paper, a neighborhood in an
  end space will always be a clopen neighborhood.} We say $x$ and $y$ are
  equivalent, and write $x \sim y$, if $x \preceq y$ and $y \preceq x$.
  This defines an equivalence relation on $E$. For $x \in E$, denote by
  $E(x)$ the equivalence class of $x$, and denote by $[E]$ the set of
  equivalence classes. From this we get a partial order $\prec$ on $[E]$,
  defined by $E(x) \prec E(y)$ if $x \preceq y$ and $x \nsim y$. Note that
  by definition, if $x \preceq y$, then $y$ is either locally homeomorphic
  to $x$ or an accumulation point of homeomorphic images of $x$ under
  $\Homeo(E,F)$. One easily verifies that, since $F \subset E$ is closed,
  either $E(x) \cap F = \emptyset$ or $E(x) \cap F = E(x)$. Note additionally
  that if there is a homeomorphism $(E, F) \to (E, F)$ which maps $x \mapsto y$,
  then $x \sim y$. Consequently, self-homeomorphims of $(E, F)$ preserve each equivalence class. 
  
  We say a point $x \in E$ is \emph{maximal} if $E(x)$ is maximal with
  respect to $\prec$. Denote by $M(E)$ the set of
  maximal elements in $E$. 

  \begin{proposition}[\cite{MR21}] \label{prop:orderends}

    Let $(E,F)$ be an ends space. The following statements hold.  
    \begin{itemize}
      \item The set $M(E)$ of maximal elements under the partial order
        $\prec$ is non-empty.  
      \item For every $x \in M(E)$, its equivalence class $E(x)$ is either
        finite or homeomorphic to a Cantor set.
      \item If $(E,F)$ is self-similar, then $M(E)$ is a single equivalence
        class $E(x)$, and $E(x)$ is either a singleton or homeomorphic to a
        Cantor set. 
    \end{itemize}

  \end{proposition}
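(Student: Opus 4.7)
The plan is to handle the three bullets in sequence: Zorn's lemma gives the existence statement, an accumulation argument shows that maximal equivalence classes are closed in $E$ (hence finite or Cantor), and the defining property of self-similarity pins down uniqueness. For the first bullet I would apply Zorn's lemma to the poset $[E]$ ordered by $\prec$, whose only verification is that every chain $\{E(x_\alpha)\}$ has an upper bound. Picking representatives $x_\alpha \in E$, pass to a convergent subnet by compactness and metrizability and call its limit $x_\infty$. Every clopen neighborhood $U$ of $x_\infty$ eventually contains some $x_\beta$ with $\beta \geq \alpha$, so by $x_\alpha \preceq x_\beta$ the set $U$ contains a homeomorphic copy of a neighborhood of $x_\alpha$; hence $x_\alpha \preceq x_\infty$ for every $\alpha$, and $E(x_\infty)$ is the required upper bound.

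For the second bullet, I first argue that $E(x)$ is closed whenever $x$ is maximal: if $x_n \in E(x)$ converges to $y$, then any neighborhood $U$ of $y$ contains a tail of the sequence and hence a homeomorphic copy of a neighborhood of $x$, giving $x \preceq y$ and so $y \sim x$ by maximality. Being closed in the compact totally disconnected metric space $E$, the set $E(x)$ is itself such a space, so by Brouwer's characterization of the Cantor set it suffices to rule out isolated points whenever $E(x)$ is infinite. Suppose then that $y \in E(x)$ is isolated, with witness clopen $V \ni y$ satisfying $V \cap E(x) = \{y\}$. Since $E(x)$ is closed and infinite, it contains an accumulation point $z \neq y$; by $z \preceq y$ there is a clopen neighborhood $W$ of $z$ and a homeomorphism $\phi \colon W \to W' \subset V$. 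The key observation, using nothing beyond the definition of $\preceq$, is that any such local homeomorphism between clopen subsets of $E$ automatically preserves $\sim$-classes: for each $p \in W$ one has $\phi(p) \sim p$. Applied to the infinitely many $p \in W \cap E(x)$ forced by $z$ being an accumulation point, all the images lie in $V \cap E(x) = \{y\}$, contradicting injectivity of $\phi$.

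For the third bullet, in the self-similar case I first rule out two distinct maximal classes $E(x) \neq E(y)$: both are closed and disjoint by the second bullet, so zero-dimensionality of $E$ supplies a clopen partition $E = U_x \sqcup U_y$ with $E(x) \subset U_x$ and $E(y) \subset U_y$. Self-similarity places a clopen copy of $(E, F)$ in $U_x$ or in $U_y$; the corresponding homeomorphism produces a point of $E(y)$ inside $U_x$, or of $E(x)$ inside $U_y$, contradicting the choice of partition. Hence $M(E) = E(x)$ is a single class. To upgrade ``finite or Cantor'' to ``singleton or Cantor'', suppose $E(x) = \{x_1, \ldots, x_n\}$ with $n \geq 2$, choose pairwise disjoint clopen $U_i \ni x_i$ meeting $E(x)$ only in $x_i$, let $V$ be the complement, and apply self-similarity to the partition $E = U_1 \sqcup \cdots \sqcup U_n \sqcup V$: any clopen copy of $E$ in a single piece would have to contain $n$ distinct points of $E(x)$, but no $U_i$ contains more than one such point and $V$ contains none. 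I expect the main obstacle to be the isolated-point analysis in the second bullet, where one must carefully separate the notion of a local homeomorphism between subsets of $E$ from a global self-homeomorphism of $E$, and then exploit the automatic preservation of $\sim$-classes under such purely local maps.
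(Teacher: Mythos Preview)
The paper does not include its own proof of this proposition; it is simply quoted from \cite{MR21}. Your argument is correct and self-contained, and the ``key observation'' you isolate---that any homeomorphism between clopen subsets of $(E,F)$ preserves $\sim$-classes---is exactly the right lever; note that you are implicitly using it again in the third bullet, since the self-similar embedding $E \hookrightarrow D$ must carry each maximal point to a point of the same class, which is what forces the contradictions there.

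Two minor clarifications worth making explicit. In the first bullet, compactness alone (not metrizability) supplies the convergent subnet; more importantly, you should say that the net is indexed by the chain $\mathcal{C}$ with its $\preceq$-order, so that subnet cofinality genuinely furnishes, for each fixed $\alpha$ and each clopen $U \ni x_\infty$, some $\beta$ with $x_\alpha \preceq x_\beta$ and $x_\beta \in U$. In the second bullet, the one-line verification of $\phi(p)\sim p$ is: for clopen $U \ni \phi(p)$, the set $\phi^{-1}(U\cap W')$ is a clopen neighborhood of $p$ mapped homeomorphically into $U$, giving $p \preceq \phi(p)$, and symmetrically via $\phi^{-1}$. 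With these in place the proof is complete.
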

  
  Observe that when $M(E)$ is a single equivalence class $E(x)$ and $F \ne
  \emptyset$, then $E(x) \cap F = E(x)$. 

  \subsection{Classification of infinite-type surfaces}

  By a \emph{surface} we always mean a connected, orientable $2$--manifold.
  A surface has \emph{finite type} if its fundamental group is finitely
  generated; otherwise, it has infinite type. In this paper, we are
  primarily interested in surfaces of infinite type. We refer to
  \cite{Ric63} for details.
  
  The collection of compact sets on a surface $\Sigma$ forms a directed set
  by inclusion. The \emph{space of ends} of $\Sigma$ is \[ E(\Sigma) =
  \lim_{\longleftarrow} \pi_0(\Sigma \setminus K), \] where the inverse
  limit is taken over the collection of compact subsets $K \subset \Sigma$.
  Equip each $\pi_0(\Sigma \setminus K)$ with the discrete topology. Then
  the limit topology on $E(\Sigma)$ is a totally disconnected, compact, and
  metrizable. An element of $E(\Sigma)$ is called an \emph{end} of
  $\Sigma$. 
  
  An end $e \in E(\Sigma)$ is \emph{accumulated by genus} if for all
  subsurface $S \subset \Sigma$ with $e \in E(S)$, then $S$ has infinite
  genus; otherwise, $e$ is called \emph{planar}. Let $E^g(\Sigma)$ be the
  subset of $E(\Sigma)$ consisting of ends accumulated by genus. This is
  always a closed subset of $E(\Sigma)$, with $E(\Sigma)=\emptyset$ if and
  only if $\Sigma$ has finite genus. Hence the pair
  $(E(\Sigma),E^g(\Sigma))$ forms an ends space. Conversely, by
  \cite[Theorem 2]{Ric63}, every ends space $(E,F)$ can be realized as the
  space of ends of some surface $\Sigma$, with
  $(E,F)=(E(\Sigma),E^g(\Sigma))$.

  Infinite type surfaces are completely classified by the following data:
  the genus (possibly infinite), and the homeomorphism type of the ends
  space $(E(\Sigma), E^g(\Sigma))$. More precisely: 
  
  \begin{theorem}[{\cite{Ker23} \cite[Theorem 1]{Ric63}}] \label{thm:classification}
    
    Suppose $\Sigma$ and $\Sigma'$ are boundaryless surfaces. Then, $\Sigma$ and $\Sigma'$ are
    homeomorphic if and only if they have the same (possibly infinite) genus
	and there is a homeomorphism between
    $(E(\Sigma),E^g(\Sigma))$ and $(E(\Sigma'),E^g(\Sigma'))$.

  \end{theorem}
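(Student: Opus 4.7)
The forward direction is essentially functoriality: any homeomorphism $f \from \Sigma \to \Sigma'$ carries compact sets to compact sets and so induces a homeomorphism of the inverse limits defining the ends, while the property of an end being accumulated by genus is intrinsic to any neighborhood basis, so $f$ sends $E^g(\Sigma)$ to $E^g(\Sigma')$. Total genus is preserved as well. The content is in the converse, so the plan is to construct a homeomorphism $\Sigma \to \Sigma'$ from a given homeomorphism $\phi \from (E(\Sigma),E^g(\Sigma)) \to (E(\Sigma'),E^g(\Sigma'))$ and the assumption of equal genus.

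The approach is a back-and-forth exhaustion argument. First I would pick nested compact exhaustions $\Sigma = \bigcup_n K_n$ and $\Sigma' = \bigcup_n K_n'$ by connected compact subsurfaces with boundary, chosen so that no complementary component is compact; each $K_n$ then induces a clopen partition of $E(\Sigma)$ indexed by its boundary components, and similarly for $K_n'$. Running the back-and-forth, at odd stages I enlarge $K_n'$ to match $K_n$ under $\phi$ and at even stages I enlarge $K_n$ to match $K_n'$ under $\phi^{-1}$. Two matchings need to be arranged at each stage: (i) $\phi$ carries the clopen partition of $E(\Sigma)$ induced by $K_n$ to that induced by $K_n'$, which is possible because any clopen subset of an ends space is realized by a compact separating subsurface of the corresponding surface; and (ii) each complementary region $\Omega$ of $\Sigma \setminus K_n$ has the same (possibly infinite) genus as its $\phi$-corresponding region in $\Sigma' \setminus K_n'$. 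For (ii), $\phi$ restricts to a homeomorphism of ends of $\Omega$ onto ends of its partner, so if the ends in $\Omega$ are accumulated by genus then so are the partner's; and if only finitely many handles occur in $\Omega$ one can absorb them all by enlarging $K_n$, and symmetrically for $\Sigma'$. The assumption that total genera agree guarantees that these finite tallies can be made consistent across all complementary regions. At the end of each stage, $K_n$ and $K_n'$ are connected compact surfaces with matching genus and matching number of boundary components, hence homeomorphic by the classical classification of compact surfaces.

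With matched exhaustions in hand, I would build the homeomorphism inductively. Having produced a homeomorphism $K_{n-1} \to K_{n-1}'$, I want to extend it across the layer $K_n \setminus \text{int}(K_{n-1})$, whose components pair up with the components of $K_n' \setminus \text{int}(K_{n-1}')$ under the matching. Each such pair is a compact surface with boundary, and the matching of genera and boundary counts makes the two members of a pair homeomorphic; the classification of compact surfaces with boundary additionally allows the homeomorphism to be prescribed on the $\partial K_{n-1}$--side up to isotopy, so it can be made to agree with the homeomorphism built at the previous stage. The nested union of the partial homeomorphisms is a bijection $\Sigma \to \Sigma'$, and continuity in both directions follows because the $K_n$ and $K_n'$ form cofinal exhaustions in the respective topologies.

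The main obstacle is the bookkeeping inside the inductive step: the annular layers must actually be homeomorphic, and simultaneously the boundary parametrization from the previous stage must be respected. Both constraints are sensitive to the order in which the matchings above are set up, which is why the back-and-forth must be run in the precise sequence ends partition first, then within-region genus, then layer extension. Once this order is fixed and the finite-handle absorption is carried out carefully, each inductive step reduces to the classification of compact surfaces with boundary with a prescribed boundary homeomorphism, which is standard.
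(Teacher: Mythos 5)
This theorem is quoted in the paper as the classical Ker\'ekj\'art\'o--Richards classification, with no proof supplied beyond the citation to \cite[Theorem 1]{Ric63}; your back-and-forth argument on matched compact exhaustions --- realizing clopen partitions of the ends space by compact separating subsurfaces, using $E^g$ to decide which complementary regions carry infinite genus, absorbing finite genus into the compact cores, and extending layer by layer via the classification of compact bordered surfaces --- is essentially Richards' original proof. The sketch is correct in outline; the only points deserving extra care are that the clopen partition should be indexed by complementary components (arranged to be non-compact with connected boundary) rather than by boundary circles, and that the boundary-prescribed extensions across each layer must be chosen compatibly with orientations, both of which are standard.
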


  We remark that although Richards' classification of infinite type
  surfaces is only stated for boundaryless surfaces, it easily extends to
  surfaces with finitely many compact boundary components. That is, two
  surfaces with the same genus, same number of (finitely many) compact
  boundary components, and homeomorphic end space pairs $(E, E^g)$ are in
  fact homeomorphic.

  Fix an orientation on a surface $\Sigma$ and set $(E,F) =
  (E(\Sigma),E^g(\Sigma))$. Let $\Homeo^+(\Sigma)$ be the group of
  orientation preserving homeomorphisms of $\Sigma$. This is a topological
  group equipped with the compact open topology, and moreover it is a
  Polish group. The connected component of the identity is a closed normal
  subgroup $\Homeo_0(\Sigma)$ comprised of homeomorphisms isotopic to the
  identity. The quotient group \[\mcg(\Sigma) = \Homeo^+(S)/\Homeo_0(S)\]
  is called the \emph{mapping class group} of $\Sigma$. When $\Sigma$ has
  finite type, then $\mcg(\Sigma)$ is discrete and finitely presented. When
  $\Sigma$ has infinite type, then $\mcg(\Sigma)$ is a non locally-compact
  Polish group. 
  
  Every homeomorphism of $\Sigma$ induces a homeomorphism of its ends space
  $(E,F)$, and two homotopic homeomorphisms of $\Sigma$ induce the same map
  on $(E,F)$. This gives a continuous homomorphism $\Phi : \Homeo^+(\Sigma)
  \to \Homeo(E,F)$ that factors through $\mcg(\Sigma)$. By \cite{Ric63},
  the map $\Phi$ is also surjective. 
  
  As noted in \cite[Section 4]{MR21}, we also know the preorder $\preceq$
  on $E$ is equivalent to: $x \preceq y$ if and only if for every
  neighborhood $U$ of $y$ there is a neighborhood $V$ of $x$ and $f \in
  \Homeo^+(\Sigma)$ such that $\Phi(f)(V) \subset U$. 

  \begin{definition}
    
    A surface $\Sigma$ is called \emph{self-similar} if its space of ends
    $(E(\Sigma),E^g(\Sigma))$ is self-similar and $\Sigma$ has genus 0 or
    infinite genus. 

  \end{definition}
  
  Note that when $\Sigma$ is self-similar and has infinite genus, then each
  maximal end of $E(\Sigma)$ must be accumulated by genus.

  \begin{remark}
    
    We point out our definition of self-similar surfaces is equivalent to
    another notion. First, following \cite{MR21}, a subset $A$ of a surface
    $\Sigma$ is called \emph{non-displaceable} if $f(A) \cap A \ne
    \emptyset$ for every $f \in \Homeo(S)$. Then, $\Sigma$ is self-similar
    if and only if $\Sigma$ has self-similar ends space and no
    non-displaceable compact subsurfaces. One direction is clear: if
    $\Sigma$ has finite positive genus, then $\Sigma$ has a compact
    non-displaceable subsurface. The other direction is observed by
    \cite[Lemma 5.9 and 5.13]{APV21}.

  \end{remark}

  \subsection{Stable neighborhoods of ends and self-similarity}
  
  \label{sec:stable}

  We now collect some facts about self-similar ends spaces. The key take
  away of this section is that self-similar ends spaces with infinitely
  many maximal ends behave very much like a Cantor set.
  
  \begin{definition} 
    
    Given $x \in E$, a neighborhood $U$ of $x$ is called \emph{stable} if
    any smaller neighborhood $V \subset U$ contains a homeomorphic copy of
    $U$. (Recall that this means that $(V, V \cap F)$ contains a
    homeomorphic copy of $(U, U\cap F)$). 
  
  \end{definition}

  \begin{lemma}[{\cite[Lemma 5.4]{APV21}}] \label{lemma:Eisstablenbhd} 

    If $(E,F)$ is self-similar, then for all maximal element $x \in E$, the
    set $E$ is a stable neighborhood of $x$. 
  
  \end{lemma}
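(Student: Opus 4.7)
The plan is, given an arbitrary clopen neighborhood $V$ of the maximal end $x$, to exhibit a clopen subset of $V$ homeomorphic to $(E, F)$. My first move would be to apply self-similarity directly to the two-piece partition $E = V \sqcup V^c$, which yields a clopen set $D \cong (E, F)$ contained in exactly one of the two pieces. If $D \subset V$, we are done at once.

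In the remaining case $D \subset V^c$, I would choose a homeomorphism $h \colon E \to D$ and note that the image $h(V) \subset D \subset V^c$ is a clopen copy of $V$ containing the maximal end $h(x)$. Because $D$ is clopen in $E$, the preorder on $D$ inherited from $E$ coincides with its preorder as an ends space in its own right (via $h$), so $h(x)$ is maximal in $E$, and is equivalent to $x$ by Proposition~\ref{prop:orderends}. I would then apply self-similarity to the three-piece partition $E = V \sqcup h(V) \sqcup R$, where $R = E \setminus (V \sqcup h(V))$, obtaining a clopen $D' \cong (E, F)$ in one of the three pieces. If $D' \subset V$ we are done; if $D' \subset h(V)$, then $h^{-1}(D') \subset V$ does the job; only $D' \subset R$ leaves work, in which case I would iterate, transporting further copies of $V$ into each newly produced clopen copy of $E$.

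The main obstacle is to argue this iteration cannot continue indefinitely. My approach would be to short-circuit it by promoting the equivalence $h(x) \sim x$ to an actual self-homeomorphism $g \in \Homeo(E, F)$ sending $h(x) \mapsto x$, constructed from the self-similar structure by a standard back-and-forth: one builds $g$ as a direct limit of finite clopen matchings, at each stage using self-similarity to exchange complementary clopen neighborhoods around $x$ and $h(x)$. Once such a $g$ is in hand, the set $g(D)$ is a clopen copy of $(E, F)$ containing $x$, and recursing the whole construction inside $g(D) \cong (E, F)$ with the clopen neighborhood $V \cap g(D)$ of $x$ produces the desired clopen copy of $E$ inside $V$; equivalently, the clopen embeddings $V \hookrightarrow E$ and $E \cong g(D) \hookrightarrow E$ set up a Schr\"oder--Bernstein argument for zero-dimensional compact metrizable ends pairs, concluding $(V, V \cap F) \cong (E, F)$, so $V$ itself serves as the desired copy. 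I expect the hardest technical point to be packaging the back-and-forth cleanly while keeping track of the closed subset $F$, as is done in \cite[Lemma 5.4]{APV21}.
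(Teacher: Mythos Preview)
The paper does not supply its own proof of this lemma; it simply quotes \cite[Lemma~5.4]{APV21} and moves on. So there is no paper argument to compare against directly, but your proposal has two genuine gaps worth flagging.

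First, the back-and-forth construction of the global homeomorphism $g$ with $g(h(x)) = x$ is not justified, and is in danger of being circular. You say you will ``use self-similarity to exchange complementary clopen neighborhoods'' at each stage, but you do not say how. The natural back-and-forth here is exactly the one the paper runs in the \emph{next} lemma (Lemma~\ref{lemma:allclopenshomeo}), and that argument explicitly invokes Lemma~\ref{lemma:Eisstablenbhd} at every step to embed clopen pieces into small neighborhoods of a maximal point. Deferring the details to \cite[Lemma~5.4]{APV21} is circular, since that is the result under discussion.

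Second, even granting that such a $g$ exists, your Schr\"oder--Bernstein setup does not yield the conclusion. You have the inclusion $V \hookrightarrow E$ and a clopen self-embedding $E \cong g(D) \hookrightarrow E$, but Schr\"oder--Bernstein requires an embedding $E \hookrightarrow V$, which you never produce: $g(D)$ contains $x$, but there is no reason for $g(D)$ to sit inside $V$. Your alternative, ``recurse inside $g(D)$ with the neighborhood $V \cap g(D)$,'' is literally the original problem again, with no parameter decreasing to force termination.

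A short argument that avoids $g$ entirely: iterate self-similarity, at each stage partitioning the current copy of $E$ into clopen pieces of small diameter, to obtain nested clopen sets $D_1 \supset D_2 \supset \cdots$, each homeomorphic to $(E,F)$, with $\bigcap_n D_n = \{z\}$ a single point. Every neighborhood of $z$ then contains some $D_n \cong E$, hence a copy of any neighborhood of any point of $E$; so $z$ is maximal and, by Proposition~\ref{prop:orderends}, $z \sim x$. Now $z \preceq x$ gives a clopen embedding of some neighborhood $W$ of $z$ into $V$, and for $n$ large we have $D_n \subset W$; the image of $D_n$ is the desired clopen copy of $(E,F)$ inside $V$.
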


  The following statement is reminiscent of the statement of \cite[Lemma
  4.17]{MR21}, but stronger than what the latter implies, though our proof
  is modeled after theirs.  

  \begin{lemma} \label{lemma:allclopenshomeo} 
    
    Suppose $(E,F)$ is self-similar. Then for all maximal points $x, y \in
    M(E)$ and all clopen neighborhoods $U, V$ resp.\ of $x, y$, there exists
    a homeomorphism $\varphi: (U, U \cap F) \to (V,V \cap F)$ such that
    $\varphi(x) = y$.

  \end{lemma}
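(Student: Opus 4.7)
The plan is to build $\varphi$ by a back-and-forth construction that matches clopen ``annuli'' around $x$ and $y$, using Lemma~\ref{lemma:Eisstablenbhd} as the essential tool: since $x, y$ are maximal in self-similar $(E,F)$, $E$ is a stable neighborhood of each of them.

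First I would pick descending clopen neighborhood bases $U = U_0 \supsetneq U_1 \supsetneq \cdots$ of $x$ and $V = V_0 \supsetneq V_1 \supsetneq \cdots$ of $y$ with $\bigcap_n U_n = \{x\}$ and $\bigcap_n V_n = \{y\}$, with diameters shrinking to zero. This is possible since $E$ is compact, totally disconnected, and metrizable. Setting $A_n = U_n \setminus U_{n+1}$ and $B_n = V_n \setminus V_{n+1}$, we obtain clopen decompositions
\[
U = \{x\} \sqcup \bigsqcup_{n \ge 0} A_n, \qquad V = \{y\} \sqcup \bigsqcup_{n \ge 0} B_n,
\]
and the idea is to define $\varphi$ piece by piece on each annulus and extend by $\varphi(x) = y$.

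The core step is to alternately refine the two sequences so that $(A_n, A_n \cap F) \cong (B_n, B_n \cap F)$ for every $n$. Each zigzag move reduces to the following task: given a target clopen subspace $A \subset E$ not containing $y$, find a smaller clopen neighborhood $W' \subsetneq W$ of $y$ (inside the current neighborhood $W$) with $(W \setminus W', (W \setminus W') \cap F) \cong (A, A \cap F)$, and symmetrically at $x$. Such a decomposition is available because stability of $E$ at the maximal point $y$ forces $W$ to ``look like'' $E$ itself, so it has enough room to accommodate any prescribed clopen subspace of $E$ as a clopen piece of its complement to a smaller neighborhood of $y$. Alternating the roles of $x$ and $y$ while forcing diameters to shrink on both sides guarantees that each nested sequence contracts to its intended basepoint while the annular types agree level by level.

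Finally, I would glue the chosen homeomorphisms $\varphi_n \colon A_n \to B_n$ (each respecting $F$) together with $\varphi(x) = y$ to obtain a bijection $\varphi \colon U \to V$. Continuity at $x$ is immediate from the shrinking diameters, and away from $x$ the map is locally a homeomorphism of clopen pieces; since $U$ is compact and $V$ is Hausdorff, $\varphi$ is a homeomorphism respecting $F$. The main obstacle is the refinement step---producing the decomposition $W = W' \sqcup B$ with $B$ of prescribed ends-space type while keeping $W'$ a clopen neighborhood of the maximal point---which is precisely where self-similarity and Lemma~\ref{lemma:Eisstablenbhd} do the essential work.
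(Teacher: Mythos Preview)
Your proposal is correct and follows essentially the same back-and-forth argument as the paper, using Lemma~\ref{lemma:Eisstablenbhd} to embed clopen pieces near the maximal points while alternately shrinking neighborhoods on each side. The only difference is bookkeeping: you aim for exact annulus matching $A_n \cong B_n$ (embedding $A$ as a clopen $B \subset W \setminus \{y\}$ and setting $W' = W \setminus B$), whereas the paper allows the image of each forward map $f_n$ to be a proper clopen subset of the annulus $V_n \setminus V_{n+1}$ and absorbs the leftover with a backward map $g_n$; the paper also makes explicit the case split ($M(E)$ a singleton versus a Cantor set) needed to ensure the embedded copy avoids $y$, which you leave implicit.
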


  \begin{proof}

    The proof follows a back-and-forth argument. As usual, we will suppress
    $F$, so all maps below are maps of pairs relative to $F$. 
    
    Let $U_0 = U$ and $V_0 = V$. We define the homeomorphism from $U$ to
    $V$ inductively on clopen subsets exhausting $U \setminus \{x \}, V
    \setminus \{y\}$. For convenience, we choose some metric on $E$. We
    choose $U_1 \subseteq U_0$ to be a proper neighborhood of $x$ of
    diameter less than $1$. Since $E$ is a stable neighborhood of $y$ by
    Lemma \ref{lemma:Eisstablenbhd}, and $U_0 \setminus U_1$ is clopen,
    there is a continuous map $$ f_0: U_0\setminus U_1 \to V_0$$ which is a
    homeomorphism onto a clopen image. We can choose $f_0$ such that
    $\image(f_0) \subseteq V_0 \setminus \{y\}$ for the following reasons.
    If $M(E) = \{y\}$, this is automatic. If $M(E)$ is a Cantor set,
    then $V_0 \setminus \{y\}$ contains some $z_0 \in M(E)$, and Lemma
    \ref{lemma:Eisstablenbhd} ensures we can map $U_0 \setminus U_1$
    homeomorphically into a sufficiently small neighborhood of $z_0$ which
    avoids $y$. Since $\image(f_0)$ is clopen, we can choose a proper
    clopen subset $V_1 \subseteq V_0 \setminus \image(f_0)$ of $y$ which
    has diameter less than $1$. By the same token, we can find a map $$
    g_0: V_0 \setminus (V_1 \cup \image(f_0)) \to U_1 \setminus \{x\} $$
    which is a homeomorphism onto a proper clopen image. We similarly
    define a proper clopen neighborhood $U_2 \subseteq U_1 \setminus
    \image(g_0)$ of $x$ which has diameter less than $\frac{1}{2}$.
    
    Inductively, suppose $U_0, \dots, U_{n+1}, V_0, \dots, V_n$ have been
    constructed along with maps that are homeomorphic onto their image
    $$f_i : U_i \setminus (U_{i+1} \cup \image(g_{i-1})) \to V_i \setminus
    V_{i+1}$$ $$g_i : V_i \setminus (V_{i+1} \cup \image(f_i)) \to U_{i+1}
    \setminus U_{i+2}$$ for $0 \leq i \leq n-1$. Using Lemma
    \ref{lemma:Eisstablenbhd} as above, we then define a map which is a
    homeomorphism onto its image $$f_n : U_n \setminus (U_{n+1} \cup
    \image(g_{n-1})) \to V_n \setminus \{y\}$$ and choose a proper clopen
    neighborhood $V_{n+1} \subseteq V_n \setminus \image(f_n)$ of $y$, of
    diameter less than $\frac{1}{n+1}$. Similarly, we define a map which is
    a homeomorphism onto its image $$g_n : V_n \setminus (V_{n+1} \cup
    \image(f_n)) \to U_{n+1} \setminus \{x\}$$ and choose a proper clopen
    neighborhood $U_{n+2} \subseteq U_{n+1} \setminus \image(g_n)$ of $x$,
    of diameter less than $\frac{1}{n+2}$. We thereby inductively construct
    such a sequence of maps $f_0, f_1, \dots$ and $g_0, g_1, \dots$.
    
    Now, restrict target spaces of $f_i, g_i$ to their images. Then, by
    construction, the domains and images of the $f_i$ and $g_i^{-1}$ are
    disjoint and their respective unions are $U \setminus \{x\}$ and $V
    \setminus \{y\}$. Thus, by taking the union of $f_i$ and $g_i^{-1}$, we
    obtain a continuous bijection $\psi: U \setminus \{x\} \to V \setminus
    \{y\}$ since their domains are open subsets. Similarly, we can define
    the continuous inverse of $\psi$ with the $f_i^{-1}$ and $g_i$.
    Moreover, we can extend $\psi$ to a homeomorphism $\varphi: U \to V$ by
    mapping $x$ to $y$. \qedhere 
   \end{proof}
   
\section{Generation of the homeomorphism group }
   
  \label{sec:genunmarked}
   
  Our proof of Theorem \ref{introthm1} in the case of an unmarked surface
  proceeds via the following steps. First, we define the notion of a
  \emph{half-space} of $\Sigma$ and show that the normal closure of a
  single involution contains an $H$-translation for some half-space $H$.
  Formally, if $H$ is a half-space, we say a homeomorphism $\varphi$ is an
  \textit{$H$-translation} if $\{\varphi^n(H)\}_{n \in \Z}$ are all
  pairwise disjoint. We then show that the normal closure of such a
  $\varphi$ contains all homeomorphisms supported on $H$ and that
  half-space supported homeomorphisms generate $\Homeo^+(\Sigma)$.
   
  \begin{definition}
   
    A self-similar ends space $(E,F)$ is \emph{uniformly} self-similar if
    $M(E)$ is one equivalence class homeomorphic to a Cantor set. A surface
    $\Sigma$ is called \textit{uniformly self-similar} if
    $(E(\Sigma),E^g(\Sigma))$ is uniformly self-similar and $\Sigma$ has
    genus 0 or infinity.

  \end{definition}
   
  \begin{definition} \label{def:halfspaces}
    
    For a uniformly self-similar surface $\Sigma$, we will define a
    \textit{half-space} to be a subsurface $H \subset \Sigma$ such that
    \begin{itemize}
      \item[(i)] $H$ is a closed subset of $\Sigma$.
      \item[(ii)] $H$ has a single connected, compact boundary component. 
      \item[(iii)] $E(H)$ and $E(\overline{H^c})$ both contain a maximal
        end of $E(\Sigma)$.
    \end{itemize}   

  \end{definition}
   
  We state the following useful lemma.  
  
  \begin{lemma}[Lemma 2.1 \cite{FGM21}] \label{lemma:clopensfromsurfs}
     
    Let $\Sigma$ be a surface. Every clopen set $U$ of $E(\Sigma)$ is
    induced by a connected subsurface of $\Sigma$ with a single boundary
    circle. Consequently, if $\Sigma$ is uniformly self-similar, and both
    $U, U^c$ contain maximal ends, then this subsurface is a half-space.
    
  \end{lemma}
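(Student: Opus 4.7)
The plan is to realize $U$ as the end set of certain complementary components of a compact exhaustion and then surger to obtain a connected subsurface with a single boundary circle. The consequent half-space statement will follow immediately once the primary construction is in place.

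I would first fix an exhaustion $\Sigma = \bigcup_n K_n$ by compact subsurfaces with smooth boundary, so that $E(\Sigma) = \lim_{\longleftarrow} \pi_0(\Sigma \setminus K_n)$. A basis of clopen sets on $E(\Sigma)$ is given by cylindrical sets $V_{n,C} = \{e : e \text{ lies in } C\}$ for $C$ a component of $\Sigma \setminus K_n$. Since $U$ is clopen and $E(\Sigma)$ is compact, $U$ is a finite union of such basic sets, which can be put at a common level: there exist $n$ and $J \subseteq \pi_0(\Sigma \setminus K_n)$ with $U = \bigsqcup_{C \in J} V_{n,C}$. The degenerate cases $U = \emptyset$ and $U = E(\Sigma)$ are handled by taking $H$ to be a small closed disk in $\Sigma$ or the closure of its complement, respectively.

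Assume now that $J$ is proper and nonempty, and set $S = \overline{\bigcup_{C \in J} C}$, so that $E(S) = U$ and $\partial S$ is a finite disjoint union of $b \geq 1$ circles in $\partial K_n$; since each component of $S$ meets $\partial K_n$, there are at most $b$ components of $S$. If $b = 1$ then $S$ is already connected with a single boundary circle and one sets $H = S$. Otherwise I would iteratively choose $b - 1$ pairwise disjoint embedded arcs in $K_n$, each joining two distinct boundary circles of $S$, so that the graph on the $b$ boundary circles with these arcs as edges is a spanning tree. Such arcs exist at each step because $K_n$ is a connected compact surface and only finitely many previously chosen arcs must be avoided. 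Attaching small tubular neighborhoods of these arcs to $S$ produces $H$: each strip fuses two boundary circles into one, the spanning-tree arrangement makes $H$ connected, and since the strips are compact, $E(H) = U$.

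For the consequent statement, if $\Sigma$ is uniformly self-similar and both $U, U^c$ contain a maximal end, then $J$ is automatically proper and nonempty, and the constructed $H$ is closed, has a single connected compact boundary circle, and satisfies $E(H) = U$ and $E(\overline{H^c}) = U^c$, each containing a maximal end. Conditions (i)--(iii) of Definition \ref{def:halfspaces} are then immediate. The main technical point is the surgery step --- choosing the $b - 1$ arcs disjointly inside $K_n$ in a spanning-tree pattern so that the subsequent strip attachments simultaneously connect $S$ and collapse the boundary into one circle --- which is routine given the connectedness of $K_n$ and the iterative arc-selection described above.
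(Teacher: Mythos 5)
Your proof is correct. Note that the paper does not prove this lemma at all --- it is quoted from [FGM21] --- so there is no internal argument to compare against; your write-up supplies the standard proof (compactness of $E(\Sigma)$ puts the clopen set $U$ at a single level of a compact exhaustion, then band surgery along a spanning tree of arcs merges the finitely many boundary circles into one and connects the pieces), and the deduction of the half-space property from Definition \ref{def:halfspaces} is immediate as you say. Two small points deserve a word if this were written out in full: the exhaustion should be chosen with each $K_n$ connected (always possible), and the claim that the $b-1$ disjoint arcs exist ``because only finitely many previously chosen arcs must be avoided'' is slightly glib --- cutting a surface along an arc can disconnect it in general. What saves you is exactly the spanning-tree discipline: each new arc joins two boundary circles lying in distinct components of the tree built so far, hence two \emph{distinct} boundary circles of the surface obtained by cutting $K_n$ along the previous arcs, and cutting along an arc with endpoints on distinct boundary circles preserves connectedness, so the induction goes through.
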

  	
  The following corollary follows easily from the above Lemma and the fact
  that $E(\Sigma)$ is a subspace of a Cantor set. 

  \begin{corollary} \label{cor:nestedgoodhs}
    
    Let $\Sigma$ be a uniformly self-similar surface, and let $x \in M(E)$.
    There exists a sequence of nested half-spaces $S_1 \supseteq S_2
    \supseteq \dots$ such that $\{x\} = \displaystyle \bigcap_i E(S_i)$ and
    $\partial S_i$ is compact and connected for all $i$.

  \end{corollary}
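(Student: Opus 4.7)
The plan is to reduce the problem to building a nested sequence of clopen neighborhoods of $x$ satisfying the hypothesis of Lemma~\ref{lemma:clopensfromsurfs}, and then to realize them as nested half-spaces by applying the lemma recursively.

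First I would use that $E(\Sigma)$ is totally disconnected, compact, and metrizable, hence embeds into the Cantor set, so $x$ admits a countable decreasing basis of clopen neighborhoods $U_1 \supseteq U_2 \supseteq \cdots$ with $\bigcap_i U_i = \{x\}$. Since $\Sigma$ is \emph{uniformly} self-similar, $M(E)$ is homeomorphic to a Cantor set, and in particular contains some point $y \neq x$. After discarding finitely many of the $U_i$, I may assume $y \notin U_1$. Then each $U_i$ contains the maximal end $x$, while each complement $U_i^c$ contains the maximal end $y$, so the hypotheses of Lemma~\ref{lemma:clopensfromsurfs} are satisfied for every $U_i$.

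Next, I would construct the half-spaces recursively. The base case applies the lemma to $U_1 \subset E(\Sigma)$, yielding a half-space $S_1 \subset \Sigma$ with compact, connected boundary and $E(S_1) = U_1$. In the inductive step, given a half-space $S_n$ with $E(S_n) = U_n$, I would apply the lemma inside $\mathrm{int}(S_n)$ (an open connected surface whose ends space equals $U_n$) to the clopen subset $U_{n+1}$, obtaining a connected subsurface $S_{n+1} \subset S_n$ with a single compact, connected boundary circle and $E(S_{n+1}) = U_{n+1}$. Since $x \in U_{n+1}$ and $y \in U_{n+1}^c$ are both maximal in $E(\Sigma)$, the subsurface $S_{n+1}$ meets the three conditions of Definition~\ref{def:halfspaces} and is therefore a half-space of $\Sigma$. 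The intersection condition $\{x\} = \bigcap_i E(S_i) = \bigcap_i U_i$ is then immediate from the construction.

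The main obstacle is enforcing nestedness of the $S_i$: applying Lemma~\ref{lemma:clopensfromsurfs} independently to each $U_i$ in $\Sigma$ would not, in general, produce a nested family of subsurfaces, since the lemma only asserts existence. The recursive choice, performing the construction inside the previously built half-space, is exactly what forces the inclusions $S_{n+1} \subset S_n$ for free, and I do not expect any further subtleties beyond this.
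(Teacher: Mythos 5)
Your proof is correct and matches the paper's intended argument: the paper gives no explicit proof of this corollary, asserting only that it ``follows easily'' from Lemma~\ref{lemma:clopensfromsurfs} and the fact that $E(\Sigma)$ is a subspace of a Cantor set, which is precisely the nested-clopen-basis-plus-recursive-application argument you give, with the recursion correctly identified as the point that forces nestedness. The only quibble is that $\mathrm{int}(S_n)$ has one extra (isolated, planar) end arising from the deleted boundary circle, so its ends space is $U_n$ together with one additional point rather than $U_n$ itself; this does not affect the argument, since that extra end never lies in $U_{n+1}$ and the resulting subsurface still induces $U_{n+1}$ when viewed inside $\Sigma$.
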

   
  \begin{lemma} \label{lem:involhavetrans}

    Let $\Sigma$ be a uniformly self-similar surface. Then, there exists a
    half-space $H \subseteq \Sigma$, an involution $\tau$, and $\varphi \in
    \Homeo^+(\Sigma)$ such that $\set{\varphi^n(H)}_{n \in \Z}$ are all
    pairwise disjoint and $\varphi \in \langle \langle \tau \rangle
    \rangle$. Moreover, $\varphi$ is a product of two conjugates of $\tau$,
    and we can choose $\tau$ and $\varphi$ to fix some point in the
    complement of $\displaystyle \bigcup_{n \in \Z} \varphi^n(H)$.

  \end{lemma}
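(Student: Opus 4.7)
The plan is to construct an explicit model $\Sigma'$ with manifest ``bi-infinite chain plus hub'' structure and then invoke Theorem~\ref{thm:classification} to identify $\Sigma' \cong \Sigma$. Using the self-similarity of $(E, F)$ together with Lemma~\ref{lemma:Eisstablenbhd} and Lemma~\ref{lemma:allclopenshomeo}, I first produce a decomposition
\[ E = \{x_+, x_-\} \sqcup \bigsqcup_{n \in \frac{1}{2}\Z} U_n \]
where $x_+, x_-$ are two distinct maximal ends, each $U_n$ is a clopen neighborhood of a maximal end (so all the $U_n$ are mutually homeomorphic as ends pairs by Lemma~\ref{lemma:allclopenshomeo}), and $U_n \to x_\pm$ as $n \to \pm\infty$. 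Indexing by $\tfrac{1}{2}\Z$ rather than $\Z$ will allow the half-shift $U_n \mapsto U_{n+1/2}$ to be realized as a homeomorphism of $E$, crucial for making the two involutions below conjugate.

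I then realize this at the surface level. Take the open cylinder $C = S^1 \times \R$ with $S^1 = \R/\Z$. For each $n \in \tfrac{1}{2}\Z$ cut a small disk around $(0, n + \tfrac{1}{4}) \in C$ and glue in a subsurface $H_n$ having one compact boundary circle, ends pair $(U_n, U_n \cap F)$, and genus $0$ or infinite matching $\Sigma$; also cut a small disk around the point $(\tfrac{1}{2}, 0)$ and glue in a closed disk $S_*$ with a distinguished center $p$. By construction $\Sigma'$ has the correct genus, and its ends pair is homeomorphic to $(E, F)$, so Theorem~\ref{thm:classification} yields $\Sigma' \cong \Sigma$, along which everything below gets transported.

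Fix a continuous even function $f : S^1 \to \R$ with $f \equiv 1$ near $0$ and $f \equiv 0$ near $\tfrac{1}{2}$. On $C$, define
\[ \varphi(x,y) = (x, y + f(x)), \quad \beta(x,y) = (x, y + f(x)/2), \quad \tau(x,y) = (-x, -y), \]
and extend $\varphi|_{S_*}, \beta|_{S_*}$ to be the identity and $\tau|_{S_*}$ to be the $\pi$-rotation fixing $p$. Each is orientation-preserving on $\Sigma'$, $\tau$ is an involution, and the maps act on the nubs by $\varphi(H_n) = H_{n+1}$, $\beta(H_n) = H_{n+1/2}$, and $\tau(H_n) = H_{-n-1/2}$. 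A direct calculation using evenness of $f$ shows $\tau \varphi \tau = \varphi^{-1}$, so $\sigma = \tau \varphi$ is an involution with $\varphi = \tau \sigma$; a parallel calculation shows $\sigma = \beta^{-1} \tau \beta$. Hence $\varphi$ is a product of two conjugates of $\tau$, and in particular lies in $\langle\langle \tau \rangle\rangle$. Setting $H = H_0$, the iterates $\{\varphi^n(H)\}_{n \in \Z} = \{H_n\}_{n \in \Z}$ are pairwise disjoint, and $p \in S_*$ is a common fixed point of $\tau$ and $\varphi$ in the complement of $\bigcup_{n \in \Z} \varphi^n(H)$.

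The main obstacle is producing the ends space decomposition. Balancing the back-and-forth procedure so that $E$ is exhausted simultaneously toward $x_+$ and $x_-$, while keeping every peeled clopen homeomorphic to the others, requires careful bookkeeping with Lemma~\ref{lemma:Eisstablenbhd} and Lemma~\ref{lemma:allclopenshomeo}. Once that decomposition is in hand, the rest is just unwinding definitions and applying Theorem~\ref{thm:classification}.
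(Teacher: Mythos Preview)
Your approach is essentially the paper's: build an explicit model surface as a bi-infinite chain of copies of a fixed half-space glued along a cylinder, invoke Theorem~\ref{thm:classification} to identify it with $\Sigma$, and read off the involution and translation from the manifest symmetry. The paper uses two rotation axes on the cylinder (involutions $\tau$ and $\sigma$ about axes through $D_0$ and $D_1$, with $\varphi = \sigma\tau$ shifting the chain by $2$), whereas you use a single rotation $\tau$ together with a bump-function shift $\varphi$ and a half-shift $\beta$ to exhibit the conjugacy; your inclusion of the fixed-point hub $S_*$ from the outset is a bit cleaner than the paper's a~posteriori tangential modification of $D_0$ and $D_1$.

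One point deserves tightening. You define $\varphi, \beta, \tau$ by explicit formulas on the cylinder $C$ and verify $\tau\varphi\tau = \varphi^{-1}$ and $\sigma = \beta^{-1}\tau\beta$ by direct calculation---but that calculation lives on $C$, not on $\Sigma'$. For the relations to hold on all of $\Sigma'$ you must extend the three maps over the glued pieces $H_n$ compatibly, and with the natural choice (extend $\varphi,\beta$ by the identity on a fixed model $H_*$) the constraints force the extension of $\tau$ to be a single orientation-preserving involution of $H_*$ restricting to rotation by $\pi$ on $\partial H_*$. Such an involution does exist, but it merits a sentence. Alternatively, and more robustly, first extend $\tau$ and $\beta$ arbitrarily over the $H_n$ (subject only to $\tau^2=\id$), then \emph{define} $\sigma := \beta^{-1}\tau\beta$ and $\varphi := \tau\sigma$; the relations then hold by construction, and the cylinder calculation shows $\varphi$ agrees with your formula there, so $\varphi(H_n)=H_{n+1}$ as needed. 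The paper is comparably terse on the analogous step (that its $\sigma$ is conjugate to its $\tau$), so this is a shared soft spot rather than a defect peculiar to your write-up.
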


  \begin{proof}
   
    We first construct a somewhat explicit surface which is homeomorphic to
    $\Sigma$. Let $E = E(\Sigma)$ and $M=M(E)$. Let $y, z \in M$ be
    distinct points. Since $E$ is homeomorphic to a subspace of the Cantor
    set, we can find disjoint clopen subsets $\set{U_i \mid i \in \Z}$ such
    that

    \begin{itemize}
      \item $U_i \cap M \neq \emptyset$
      \item $E = \set{y, z} \cup \bigcup_i U_i$
      \item $y$ is an accumulation point of $\{U_i \mid i \leq 0\}$ but not
       $\{U_i \mid i \geq 0\}$, and $z$
      is an accumulation point of $\set{U_i \mid i \geq 0}$ but not $\{U_i \mid i \leq 0\}$.
    \end{itemize}

    By Lemma \ref{lemma:clopensfromsurfs}, there is a half-space $\Sigma_0
    \subseteq \Sigma$ where $E(\Sigma_0) = U_0$. We let $S_i$ be a copy of
    $\Sigma_0$ for each $i \in \Z$, and we let $\underline{S}$ be the
    (oriented) infinite cylinder with countably many disjoint open discs
    removed in a periodic fashion. (We make sure to choose discs with
    disjoint closures.) See Figure \ref{fig:cylinder}. Let $\set{C_i \mid i
    \in \Z}$ be the boundary components of $\underline{S}$. Let $S$ be the
    surface obtained by gluing $C_i$ to $\partial S_i$ via some
    homeomorphism $\psi_i: C_i \to \partial S_i$ which respects orientation
    of the surfaces.
    
    \begin{figure}[htp!] 
      \begin{center}
       \includegraphics{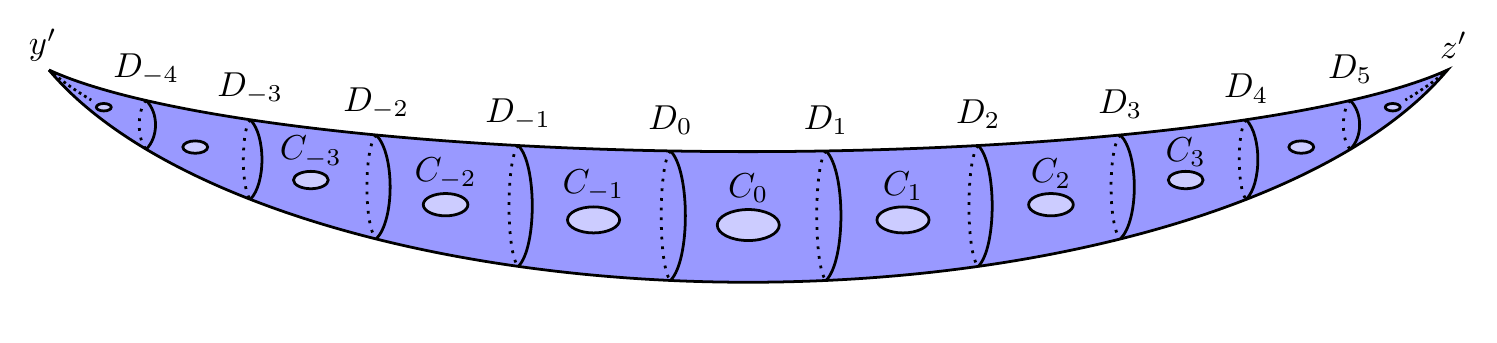} 
       \end{center} 
       \caption{The surface $\underline{S}$.}
      \label{fig:cylinder} 
    \end{figure}

    We first claim that $S \cong \Sigma$. By Theorem
    \ref{thm:classification}, we need only prove that $S, \Sigma$ have the
    same genus and that there is a homeomorphism of end spaces mapping
    $E^g(S)$ to $E^g(\Sigma)$. We will implicitly use some results from
    \cite{Ric63} without referencing them. Recall that $\Sigma$ has genus
    $0$ or $\infty$, and in the latter case, a maximal end must be
    accumulated by genus. Thus $S_i$ and $S$ have infinite genus if and
    only if $\Sigma$ does. By Lemma \ref{lemma:allclopenshomeo}, we have
    that $E(S_i) \cong U_0 \cong U_i$ (respecting genus accumulation). By
    construction of $S$, all of $E(S_i)$ are clopen subsets of $E(S)$.
    Moreover, for $i \in \Z$, let $D_i$ be disjoint curves in the cylinder
    $\underline{S}$ such that they are all translates of each other,
    separate the two ends of the cylinder, and the two sides of $D_i$
    contain $\{C_j \mid j < i\}$ and $\set{C_j \mid j \geq i}$. See Figure \ref{fig:cylinder}.
    Let $P_i^+,
    P_i^-$ be the subsurfaces of $S$ on either side of $D_i$. Then, $P_i^+$
    for $i \geq 0$ (resp. $P_i^-$ for $i \leq 0$) defines an end $z'$
    (resp. $y'$) of $S$. Then, for $n \in \N$, $$E(S) = E(P_{-n}^-) \cup
    E(P_{n}^+) \cup \bigcup_{i=-n}^{n-1} E(S_i),$$ and since only $y', z'$
    are in all $E(P_{-n}^-)$ and $E(P_n^+)$ resp., we have $E(S) = \{y',
    z'\} \cup \bigcup_i E(S_i)$. Moreover, it's clear that $E(S_i)$
    accumulate to $y'$ but not $z'$ as $i \to -\infty$ and $E(S_i)$ accumulates to $z'$
    but not $y'$
    as $i \to \infty$. Therefore, we may define a homeomorphism $E(S) \to
    E(\Sigma)$ mapping $E(S_i) \to U_i$ and $\{y', z'\} \to \{y, z\}$.

    Recall that the homeomorphism $E(S_i) \cong U_i$ maps ends accumulated
    by genus to ends accumulated by genus. If $S$ has infinite genus, then
    every $S_i$ has infinite genus and so $y', z'$ are accumulated by genus (as
    must $y, z$ as they are maximal in $E(\Sigma)$). Consequently, $E(S)
    \cong E(\Sigma)$ maps $E^g(S)$ to $E^g(\Sigma)$ and only $E^g(S)$ to
    $E^g(\Sigma)$. Consequently, $S \cong \Sigma$.

    We now construct an explicit involution $\tau$ of $S$ which normally
    generates the desired $\varphi$. First, we define an involution
    $\underline{\tau}$ on $\underline{S}$. We simply take the ``rotation''
    about an axis piercing $D_0$ and interchanging the ends $y', z'$. This
    induces a homeomorphism between pairs of curves $C_i$ and  $C_j$, where
    $j \ge 0$ and $i=-(j+1)$. For such a pair $i < j$ related by
    $\underline{\tau}(C_i) = C_j$, define a homeomorphism $\tau_{i, j}: S_i
    \to S_j$ such that
    \[\tau_{i, j}|_{\partial S_i} = \psi_j \circ \underline{\tau} \circ
    \psi_i^{-1}.\] For the same pair, define $\tau_{j, i}: S_j \to S_i$ as
    the inverse of $\tau_{i, j}$. Note that
    \[\tau_{j, i}|_{\partial S_j} = \psi_i \circ \underline{\tau}^{-1}
    \circ \psi_j^{-1} = \psi_i \circ \underline{\tau} \circ \psi_j^{-1}\]
    since $\underline{\tau}$ has order $2$. Thus, the $\tau_{i, j}$ agree
    on the overlap with $\underline{\tau}$, and so we extend
    $\underline{\tau}$ to a homeomorphism $\tau$ on all of $S$ via the
    $\tau_{i, j}$. It is clear that $\tau$ has order $2$.
       
    We can similarly define an involution $\sigma$ which is a ``rotation''
    with axis piercing $D_1$. Then, $\sigma(\tau(S_i)) = S_{i+2}$. I.e.
    $\varphi = \sigma \circ \tau$ is the desired $H$-translation where $H =
    S_0$. This establishes the lemma.
       
    To show that we can choose $\tau$ and $\varphi$ to also fix a point
    outside the $S_i$, we do the following. We homotope $D_0$ and $D_1$
    within $\underline{S}$ towards each other until they meet tangentially
    at one point. One can choose an involution $\tau$ that permutes the
    $C_i$ in the same manner as above, maps $D_0$ to itself and fixes the
    common point of $D_0 \cap D_1$. Similarly, $\sigma$ may be chosen to
    map $D_1$ to itself and fix the common point of $D_0 \cap D_1$. Since
    the new $\tau$ and $\sigma$ each permute the $C_i$ in the same manner
    as before, the rest of the argument goes through, and $\varphi$ will
    fix the same point. \qedhere
     
   \end{proof}
   
   \begin{remark} \label{rem:nondisplace}

     Note that in the above proof, the translation $\varphi$ (in the
     version without a fixed point) verifies that a surface $\Sigma$ with
     uniformly self-similar ends space with 0 or infinite genus has no
     non-displaceable surfaces. This also follows from \cite[Lemma 5.9,
     Lemma 5.13]{APV21} which prove it in the case where $E(\Sigma)$ is
     merely self-similar, but our construction gives a different
     perspective.

   \end{remark}
 
   We now show that an $H$-translation normally generates $\Homeo(H,
   \partial H) < \Homeo^+(\Sigma)$ for some half-space $H$. The proof
   technique is sometimes referred to as a ``swindle''.
   
   \begin{lemma} \label{lem:swindle}
  
     Let $\Sigma$ be uniformly self-similar, and let $\varphi$ have the
     properties as described in Lemma \ref{lem:involhavetrans}. Then,
     $\langle \langle \varphi \rangle \rangle$ contains $\Homeo(H,
     \partial H)$. Moreover, every element of $\, \Homeo(H,\partial H)$
     is a product of $\varphi$ and a conjugate of $\varphi^{-1}$.

   \end{lemma}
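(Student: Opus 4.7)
The plan is to carry out a Mazur-style swindle, using the disjointness of the translates $\{\varphi^n(H)\}_{n \in \Z}$ to glue infinitely many commuting copies of $f$ into a single homeomorphism, and then telescoping.

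First I would define, for any $f \in \Homeo(H, \partial H)$, an auxiliary homeomorphism
\[
\hat{f} \;=\; \prod_{n \geq 0} \varphi^n f \varphi^{-n},
\]
where each factor is extended by the identity outside $\varphi^n(H)$. Since $\{\varphi^n(H)\}_{n \geq 0}$ are pairwise disjoint by hypothesis and each $f$ restricts to the identity on $\partial H$, the factors have disjoint supports and fit together into a well-defined bijection of $\Sigma$ that is the identity off $\bigcup_{n \geq 0} \varphi^n(H)$.

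Next I would observe that by construction
\[
\varphi \hat{f} \varphi^{-1} \;=\; \prod_{n \geq 0} \varphi^{n+1} f \varphi^{-(n+1)} \;=\; \prod_{m \geq 1} \varphi^m f \varphi^{-m},
\]
which agrees with $\hat{f}$ on every $\varphi^m(H)$ with $m \geq 1$ and is the identity elsewhere. Comparing with $\hat{f}$, whose only additional factor is $f$ on $H$, yields the telescoping identity $\hat{f} = f \cdot (\varphi \hat{f} \varphi^{-1})$, equivalently
\[
f \;=\; \hat{f} \,\varphi\, \hat{f}^{-1} \,\varphi^{-1} \;=\; [\hat{f}, \varphi].
\]
Reading this as $f = (\hat{f} \varphi \hat{f}^{-1}) \cdot \varphi^{-1}$ expresses $f$ as a conjugate of $\varphi$ times $\varphi^{-1}$; applying the same construction to $f^{-1}$ in place of $f$ and inverting gives $f = \varphi \cdot (\hat{g}\,\varphi^{-1}\hat{g}^{-1})$ for $\hat g = \widehat{f^{-1}}$, which is the product of $\varphi$ and a conjugate of $\varphi^{-1}$ as asserted. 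In particular $f \in \langle\langle \varphi \rangle\rangle$.

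The main obstacle is verifying that $\hat{f}$ is actually a homeomorphism of $\Sigma$, not merely a set-theoretic bijection. Continuity inside any single $\varphi^n(H)$ is clear, and continuity along $\partial \varphi^n(H)$ follows from the fact that $f$, and hence each $\varphi^n f \varphi^{-n}$, restricts to the identity on the boundary. The delicate point is continuity at points of $\Sigma$ lying in the closure of $\bigcup_{n \geq 0} \varphi^n(H)$ but not in any single $\varphi^n(H)$; here I would use that the half-spaces $\varphi^n(H)$ have compact boundary and pairwise disjoint closures, so they leave every compact subset of $\Sigma$ (accumulating only at ends), and that the fixed point guaranteed by Lemma~\ref{lem:involhavetrans} lies in the complement of their union. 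Thus every point of $\Sigma$ has a neighborhood meeting only finitely many $\varphi^n(H)$, on which $\hat{f}$ agrees with a finite product of homeomorphisms, and continuity follows.
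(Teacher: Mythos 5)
Your proof is correct and is essentially the paper's own argument: the same Mazur-type swindle with $\hat f = \prod_{n\ge 0} \varphi^{\pm n} f \varphi^{\mp n}$ and the telescoping commutator identity $f = [\hat f, \varphi^{\pm 1}]$ (the paper conjugates by negative powers and takes the commutator with $\varphi^{-1}$, an immaterial sign choice, and reads off the ``moreover'' clause directly from that identity rather than via your extra pass through $f^{-1}$). Your added care about continuity of $\hat f$ at accumulation points is a reasonable elaboration of the paper's one-line appeal to disjointness of supports, justified here because the translates $\varphi^n(H)$ in the explicit model of Lemma~\ref{lem:involhavetrans} visibly escape every compact set.
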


   \begin{proof}

      Let $f \in \Homeo(H, \partial H) \subseteq \Homeo^+(\Sigma)$. We let
      $\hat{f} = \prod_{i=0}^\infty \varphi^{-i} f \varphi^i$. This is
      well-defined since $\varphi^{-i} f \varphi^i$ is supported on
      $\varphi^{-i}(H)$ and these are pairwise disjoint for all $i \geq 0$
      by assumption. Then, we have the following computation, which, again,
      is valid because of disjoint supports. $$ \left[\hat f,
      \varphi^{-1}\right] = \hat f \varphi^{-1} \hat f^{-1} \varphi =
      \left(\prod_{i=0}^\infty \varphi^{-i} f \varphi^i\right) \varphi^{-1}
      \left(\prod_{i=0}^\infty \varphi^{-i} f^{-1} \varphi^i\right) \varphi
      = \left(\prod_{i=0}^\infty \varphi^{-i} f \varphi^i\right)
      \left(\prod_{i=1}^\infty \varphi^{-i} f^{-1} \varphi^{i}\right) = f.
      \qedhere$$
      
   \end{proof}
   
   \subsection{Half-space homeomorphisms generate}
   
   \label{section:generation} 

   Our proof that homeomorphisms of half-spaces generate $\Homeo^+(\Sigma)$
   relies on a few key facts about half-spaces in uniformly self-similar
   surfaces. We record these as lemmas which we will prove below.
   
   \begin{lemma} \label{lemma:halfspaceintersections}

     Let $H_1, H_2 \subset \Sigma$ be two half-spaces. Then, one of $H_1
     \cap H_2^c, H_1^c \cap H_2^c$ contains a half-space.

   \end{lemma}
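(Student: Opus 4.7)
The plan is to locate a maximal end of $\Sigma$ inside one of the two regions and then extract a small half-space of $\Sigma$ around it. First, I would observe that $\overline{H_2^c}$ is itself a half-space by the symmetry of condition (iii) in Definition \ref{def:halfspaces}, so $E(\overline{H_2^c}) \cap M(E)$ is a nonempty clopen subset of the Cantor set $M(E)$, and hence is infinite. After an isotopy making $\partial H_1$ transverse to $\partial H_2$, the clopen partition $E(\Sigma) = E(H_1) \sqcup E(\overline{H_1^c})$ restricts to a partition of $E(\overline{H_2^c}) \cap M(E)$, so at least one side is nonempty. Without loss of generality, pick a maximal end $x \in E(H_1) \cap E(\overline{H_2^c})$.

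Next, let $C$ be the connected component of $H_1 \cap \overline{H_2^c}$ whose end set contains $x$. Then $C$ is a connected subsurface of $\Sigma$ with compact frontier (built from arcs of $\partial H_1 \cup \partial H_2$ under the transversality assumption), and $x \in E(C)$. Viewing $\mathrm{int}(C)$ as a surface in its own right, I would apply Lemma \ref{lemma:clopensfromsurfs} to a sufficiently small clopen neighborhood $U$ of $x$ in $E(\mathrm{int}(C))$ chosen so that $U$ avoids the ``boundary ends'' produced by $\partial C$ and satisfies $M(E) \setminus U \neq \emptyset$. Such a $U$ exists because $x$ has arbitrarily small clopen neighborhoods in $E(\Sigma)$ and $M(E)$ is a Cantor set, so has more than one point. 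The lemma then produces a connected subsurface $H' \subseteq \mathrm{int}(C)$ with a single compact boundary circle satisfying $E(H') = U$.

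Finally, $H'$ is a half-space of $\Sigma$: it is closed with connected compact boundary, $E(H')$ contains the maximal end $x$, and $E(\overline{H'^c}) = E(\Sigma) \setminus U$ still meets $M(E)$ by our choice of $U$. Since $H' \subseteq \mathrm{int}(C) \subseteq \mathrm{int}(H_1) \cap H_2^c \subseteq H_1 \cap H_2^c$, this completes the proof. The main technical obstacle is the last application of Lemma \ref{lemma:clopensfromsurfs}: one must be sure the realizing subsurface, together with its single frontier circle, actually sits inside $\mathrm{int}(C)$ rather than spilling out through $\partial C$. I would handle this by choosing the compact exhaustion used in the lemma's construction to contain $\partial C$ at the first stage, so that every component of the complement lies either entirely inside $\mathrm{int}(C)$ or entirely outside $C$, and any arcs used to merge multiple boundary circles into one can be taken inside $\mathrm{int}(C)$.
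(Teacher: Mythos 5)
Your argument is correct in outline and shares the paper's core idea (find a maximal end $x$ in $E(\overline{H_2^c})$ and extract a small half-space of $\Sigma$ near it), but it implements the extraction step quite differently, and more laboriously, than the paper does. The paper invokes Corollary \ref{cor:nestedgoodhs} to get nested half-spaces $S_1 \supset S_2 \supset \dots$ in $\Sigma$ with $\bigcap_i E(S_i) = \{x\}$; since $\partial H_1 \cup \partial H_2$ is compact, some $S_i$ misses it, and connectedness of $S_i$ then forces $S_i \subseteq H_1 \cap H_2^c$ or $S_i \subseteq H_1^c \cap H_2^c$ (the two options containing $x$). This entirely sidesteps transversality, passing to components, and re-applying Lemma \ref{lemma:clopensfromsurfs} inside a subsurface with boundary --- which is exactly where your version requires the most care. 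In your route you must check that $E(C)$ is clopen in $E(\Sigma)$ (true, because $\partial C$ is compact), that the extra isolated ends of $\mathrm{int}(C)$ coming from its deleted boundary circles can be avoided by $U$, and that the resulting $H'$ is actually closed in $\Sigma$ and not just in $\mathrm{int}(C)$ (this follows because $U$ avoids those boundary ends, so $H'$ cannot accumulate on $\partial C$); you correctly flag this last obstacle and your proposed fix works. What your version buys is slightly more information --- it tells you \emph{which} of the two regions contains the half-space, according to whether $E(H_1) \cap E(\overline{H_2^c})$ or $E(\overline{H_1^c}) \cap E(\overline{H_2^c})$ meets $M(E)$ --- but for the lemma as stated this is not needed, and the paper's connectedness argument is the shorter and safer path.
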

   
   \begin{lemma} \label{lemma:homeohalfspaces}
     
     If $H_1, H_2$ are two distinct half-spaces contained in a third
     distinct half-space $H_3$ and both are disjoint from a fourth
     half-space $H_4 \subset H_3$, then there exists $\varphi \in
     \Homeo^+(\Sigma)$ supported on $H_3$ such that $\varphi(H_1) = H_2$.   		

   \end{lemma}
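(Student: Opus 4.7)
The plan is to construct a homeomorphism $\varphi$ of $H_3$ that fixes $\partial H_3$ pointwise and carries $H_1$ to $H_2$; extending $\varphi$ by the identity on $\Sigma \setminus H_3$ then gives the required element of $\Homeo^+(\Sigma)$ supported on $H_3$. Let $A_i = \overline{H_3 \setminus H_i}$ for $i = 1, 2$. I will build $\varphi$ by producing homeomorphisms $f_1 \colon H_1 \to H_2$ and $f_2 \colon A_1 \to A_2$ and then gluing them along $\partial H_1$.

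The existence of $f_1$ and $f_2$ follows from the extension of Theorem \ref{thm:classification} to surfaces with finitely many compact boundary circles, combined with Lemma \ref{lemma:allclopenshomeo}. By Definition \ref{def:halfspaces}, each $E(H_i)$ is a clopen subset of $E(\Sigma)$ containing a maximal end, so Lemma \ref{lemma:allclopenshomeo} provides a pair-homeomorphism $E(H_1) \cong E(H_2)$. Both $H_i$ have one compact boundary component and equal genus (either $0$, or infinite since a maximal end is accumulated by genus), so $H_1 \cong H_2$. For the complements, the hypothesis that $H_4 \subset H_3$ is disjoint from each $H_i$ ensures $E(H_4) \subset E(A_i)$ contributes a maximal end to $E(A_i)$, and the same argument gives $A_1 \cong A_2$, each with two compact boundary circles and matching genus. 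If the resulting $f_2$ swaps $\partial H_3$ with $\partial H_1$, I post-compose with a self-homeomorphism of $A_2$ swapping its two boundary components, which exists via a symmetric model built from uniform self-similarity in analogy with the construction in Lemma \ref{lem:involhavetrans}.

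To glue $f_1$ and $f_2$ into $\varphi$, I would perform two boundary adjustments inside collar neighborhoods. Since $\Homeo^+(S^1)$ is path-connected, the restriction $f_2|_{\partial H_3}$ is isotopic to the identity; extending this isotopy across a collar of $\partial H_3$ in $A_2$ and post-composing $f_2$ with its inverse arranges that the new $f_2$ is the identity on $\partial H_3$ while leaving its action on $\partial H_1$ unchanged. Similarly, $f_1|_{\partial H_1}$ and $f_2|_{\partial H_1}$ are two orientation-preserving circle homeomorphisms $\partial H_1 \to \partial H_2$, differing by a self-homeomorphism of $\partial H_2$ isotopic to the identity, which can be absorbed by post-composing $f_1$ with a homeomorphism of $H_2$ supported in a collar of $\partial H_2$. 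The adjusted pieces glue along $\partial H_1$ to give the desired $\varphi$.

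The main obstacle is ensuring that $f_1$ and $f_2$ are orientation-preserving at the outset, since Theorem \ref{thm:classification} only guarantees unoriented homeomorphisms. I would handle this by noting that every orientable surface arising here admits an orientation-reversing self-homeomorphism, realizable as a reflection in a bilaterally symmetric model of the half-space or its complement (again in the spirit of the model in the proof of Lemma \ref{lem:involhavetrans}); post-composing $f_1$ or $f_2$ with such a reflection when necessary flips its orientation, so both can be taken orientation-preserving, and hence so is the glued $\varphi$.
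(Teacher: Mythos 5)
Your proposal is correct and follows essentially the same route as the paper: decompose $H_3$ as $H_i\cup\overline{H_3\setminus H_i}$, use the hypothesis on $H_4$ so that Lemma \ref{lemma:allclopenshomeo} and the boundary version of Theorem \ref{thm:classification} give $H_1\cong H_2$ and $\overline{H_3\setminus H_1}\cong\overline{H_3\setminus H_2}$, and glue along $\partial H_1$. The paper compresses the boundary-matching and orientation adjustments into one sentence, whereas you spell them out; the extra care is harmless and the argument goes through.
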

   
   \begin{lemma} \label{lemma:selfsimhalfspaces}
    
     If $H \subseteq \Sigma$ is a half-space, then so is $\overline{H^c}$.
     All half-spaces are homeomorphic via an ambient homeomorphism of
     $\Sigma$. Every half-space contains two disjoint half-spaces.

   \end{lemma}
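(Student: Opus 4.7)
The plan is to address the three assertions in sequence. For the first, note that $\overline{H^c}$ is closed automatically. Since $H$ is closed with compact connected boundary circle $\partial H$, we have $\overline{H^c} = H^c \cup \partial H$, whose topological boundary in $\Sigma$ is again precisely $\partial H$---still a single connected compact circle. The ends condition (iii) is symmetric under swapping $H$ and $\overline{H^c}$: writing $(\overline{H^c})^c = \mathrm{int}(H)$, one has $\overline{(\overline{H^c})^c} = H$, so the two ends sets appearing in (iii) for $\overline{H^c}$ are $E(\overline{H^c})$ and $E(H)$, both of which contain maximal ends by assumption on $H$.

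For the second, I would apply Theorem \ref{thm:classification} (extended to surfaces with finitely many compact boundary components) to compare any two half-spaces $H_1, H_2$. Both have exactly one boundary circle. Their genera agree and equal $0$ or $\infty$ according to whether $\Sigma$ has genus $0$ or infinite, since in the infinite-genus case each maximal end of $\Sigma$ is accumulated by genus and $E(H_i)$ contains such an end. Their ends pairs $(E(H_i), E(H_i) \cap F)$ are homeomorphic by Lemma \ref{lemma:allclopenshomeo} applied to any choice of maximal end in each. Thus $H_1 \cong H_2$ as surfaces with boundary; by part (i) and the same reasoning, $\overline{H_1^c} \cong \overline{H_2^c}$. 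To promote these into an ambient homeomorphism of $\Sigma$, I would fix a homeomorphism $\alpha \colon H_1 \to H_2$, choose any homeomorphism $\beta_0 \colon \overline{H_1^c} \to \overline{H_2^c}$, and then modify $\beta_0$ within a collar neighborhood of $\partial H_1$ so that its boundary restriction agrees with $\alpha|_{\partial H_1}$. This is possible because any orientation-preserving self-homeomorphism of a circle is isotopic to the identity, and the isotopy can be spread across the collar.

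For the third, observe that $M(E) \cap E(H)$ is a non-empty clopen subset of the Cantor set $M(E)$, hence itself a Cantor set. I would split it into three non-empty disjoint clopen subsets $W_1, W_2, W_3$ and extend these to a clopen partition $E(H) = U_1 \sqcup U_2 \sqcup U_3$ with $W_j \subset U_j$. Applying Lemma \ref{lemma:clopensfromsurfs} inside $H$ (relative to a compact exhaustion of $H$ containing $\partial H$), I would realize each $U_j$ as the ends of a connected subsurface $H_j \subset H$ with a single compact boundary circle, arranging the three subsurfaces to be pairwise disjoint. Each $E(H_j) \supset W_j$ contains a maximal end, and $E(\overline{H_j^c})$ contains $E(\overline{H^c})$, which has a maximal end by hypothesis on $H$; hence $H_1$ and $H_2$ are the desired disjoint half-spaces inside $H$. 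The main obstacle is precisely the disjointness-within-$H$ claim: I expect this to follow by building the three subsurfaces sequentially at a sufficiently large level of the compact exhaustion of $H$, connecting up components by arcs chosen disjoint from previously constructed pieces. The role of the auxiliary piece $U_3$ is to serve as a buffer preventing the boundary circles of $H_1$ and $H_2$ from colliding with each other or with $\partial H$.
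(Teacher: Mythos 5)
Your treatment of the first two assertions matches the paper's: the symmetry of the definition for $\overline{H^c}$, and the classification theorem (for surfaces with boundary) plus Lemma \ref{lemma:allclopenshomeo} to identify any two half-spaces, extended to an ambient homeomorphism by sending complement to complement and matching the two maps on $\partial H_1$ via a collar. That is exactly the intended argument, with the collar adjustment spelled out a bit more explicitly than in the paper.

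For the third assertion you take a genuinely different route, and it is precisely where you flag "the main obstacle" that your approach gets harder than it needs to be. Partitioning $M(E)\cap E(H)$ into clopen pieces, extending to a clopen partition of $E(H)$, and realizing each piece by a connected one-boundary subsurface of $H$ forces you to argue that the three realizing subsurfaces can be made pairwise disjoint, which requires re-proving a relative version of Lemma \ref{lemma:clopensfromsurfs} inside $H$ with control on where the connecting strips go. This can be pushed through, but the paper sidesteps it entirely: since $M(E)$ is a Cantor set and $E(H)$ is clopen, $E(H)$ contains two \emph{distinct} maximal ends $x$ and $y$; applying Corollary \ref{cor:nestedgoodhs} to each gives nested half-spaces $S_i\searrow x$ and $T_j\searrow y$ with compact connected boundaries, and for $i,j$ large these are automatically contained in $H$ and disjoint from each other (each eventually misses the compact sets $\partial H$ and $\partial T_j$, and connectedness plus the end sets shrinking to distinct points forces containment in $H$ and exclusion from $T_j$). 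I would recommend replacing your partition argument with this one: it uses only machinery already established and eliminates the disjointness gap rather than patching it. Your auxiliary buffer piece $U_3$ is also unnecessary in that framing.
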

   
   Using these three lemmas we can prove one of our main results, namely,
   that half-space homeomorphisms generate $\Homeo^+(\Sigma)$.
   
   \begin{theorem} \label{thm:halfspacehomeogen}
     
     Let $\Sigma$ be a uniformly self-similar surface, and let $H \subseteq
     \Sigma$ be a half-space. Then, $\Homeo^+(\Sigma)$ is the normal
     closure of the subgroup $\Homeo(H, \partial H)$. Furthermore, every
     element of $\Homeo^+(\Sigma)$ is a product of at most $3$
     homeomorphisms, each of which is conjugate to an element of $\,
     \Homeo(H, \partial H)$.

   \end{theorem}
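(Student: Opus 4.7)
The plan is to fragment any $f \in \Homeo^+(\Sigma)$ into a product of at most three homeomorphisms, each supported on some half-space of $\Sigma$. By Lemma~\ref{lemma:selfsimhalfspaces}, all half-spaces of $\Sigma$ are related by ambient homeomorphisms, so every such half-space-supported element is automatically a conjugate of an element of $\Homeo(H,\partial H)$. Hence expressing every $f$ as such a three-fold product establishes both the normal-closure statement and the quantitative bound of three.

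To set up the fragmentation, I would first pick a maximal end $m \in M(E)$ and, using Corollary~\ref{cor:nestedgoodhs}, choose a sufficiently small half-space $K$ around $m$. I would then select a half-space $B$ containing both $K$ and $f(K)$ such that $E(B) \setminus (E(K) \cup E(f(K)))$ still contains a maximal end, and use the latter to produce a half-space $H_4 \subseteq B$ disjoint from $K$ and $f(K)$. Such a configuration exists because $M(E)$ is a Cantor set with plenty of maximal ends available away from $K \cup f(K)$; Lemmas~\ref{lemma:halfspaceintersections} and~\ref{lemma:selfsimhalfspaces} guarantee that the desired clopen subsets of $M(E)$ can be realized as half-spaces in the prescribed positions.

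Assuming $K \neq f(K)$ as subsurfaces, Lemma~\ref{lemma:homeohalfspaces} supplies $\varphi$ supported on $B$ with $\varphi(K) = f(K)$. I would then adjust $\varphi$ to also match $f$ on $\partial K$: the map $\rho := (\varphi|_{\partial K})^{-1} \circ f|_{\partial K}$ is an orientation-preserving homeomorphism of the circle $\partial K$, hence isotopic to the identity. Using such an isotopy in a thin annular collar of $\partial K$ contained in $B$, I would build $\sigma \in \Homeo^+(\Sigma)$ supported on $B$, preserving $K$ setwise, with $\sigma|_{\partial K} = \rho$. The composition $\tilde\varphi := \varphi \circ \sigma$ is still supported on $B$, sends $K$ to $f(K)$, and now satisfies $\tilde\varphi|_{\partial K} = f|_{\partial K}$.

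Finally, $g := \tilde\varphi^{-1} f$ preserves $K$ setwise and fixes $\partial K$ pointwise, so separating $g|_K$ and $g|_{\overline{K^c}}$ (each extended by the identity) gives a factorization $g = h_1 \cdot h_2$ with $h_1 \in \Homeo(K, \partial K)$ and $h_2 \in \Homeo(\overline{K^c}, \partial K)$. Since $\overline{K^c}$ is a half-space by Lemma~\ref{lemma:selfsimhalfspaces}, the identity $f = \tilde\varphi \cdot h_1 \cdot h_2$ realizes $f$ as a product of three half-space-supported homeomorphisms; the degenerate case $K = f(K)$ is handled by taking $\varphi = \id$ and running the same argument. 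The main obstacle I anticipate is Step 1 — verifying that the configuration $(K, B, H_4)$ can always be arranged, especially when $\Phi(f)$ fixes all maximal ends so that $K$ and $f(K)$ are forced to intersect — but the flexibility in Lemmas~\ref{lemma:halfspaceintersections} and~\ref{lemma:selfsimhalfspaces} should cover this.
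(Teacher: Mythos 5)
Your overall architecture is genuinely different from the paper's and is salvageable, but Step 1 has a real gap as written, and the lemmas you cite do not fill it. You need a half-space $B$ containing $K\cup f(K)$ whose complement still contains a maximal end, plus a half-space $H_4\subset B$ disjoint from both $K$ and $f(K)$; together these force $M(E)\setminus\bigl(E(K)\cup E(f(K))\bigr)$ to contain at least two maximal ends. Lemma~\ref{lemma:halfspaceintersections} does not guarantee this: it only produces a half-space in \emph{one} of $K\cap f(K)^c$ or $K^c\cap f(K)^c$, and it may well be only the former. Indeed the configuration can fail outright for a fixed $K$: on the sphere minus a Cantor set, take $f$ with $\Phi(f)\bigl(E(K)\bigr)=E(K)^c$, so that $E(K)\cup E(f(K))=E(\Sigma)$ and no half-space is disjoint from both, nor can any half-space contain both. (You also misdiagnose the dangerous case: $\Phi(f)$ fixing all maximal ends merely forces $K$ and $f(K)$ to overlap, which Lemma~\ref{lemma:homeohalfspaces} tolerates; the problem is $\Phi(f)$ carrying $E(K)\cap M(E)$ onto a set covering $M(E)\setminus E(K)$.) The fix is to choose $K$ \emph{after} seeing $f$: pick distinct $z_1,z_2\in M(E)\setminus\{m,\Phi(f)(m)\}$ and shrink $K$ along the nested sequence of Corollary~\ref{cor:nestedgoodhs} until $E(K)$ avoids $z_1,z_2,\Phi(f)^{-1}(z_1),\Phi(f)^{-1}(z_2)$; then a whole clopen set of maximal ends survives outside $E(K)\cup E(f(K))$, and $B$ can be assembled (as in the strip construction of Lemma~\ref{lemma:homeohalfspacesv2}) with one boundary circle, containing $K$, $f(K)$, and a half-space around $z_1$, while omitting a half-space around $z_2$. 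This choice is legitimate for the theorem since Lemma~\ref{lemma:selfsimhalfspaces} conjugates $\Homeo(K,\partial K)$ into $\Homeo(H,\partial H)$.

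By contrast, the paper keeps $H_1=H$ fixed, applies Lemma~\ref{lemma:halfspaceintersections} to $H$ and $f(H)$, and never needs a half-space disjoint from both: in the case where the only room is inside $H_1$, it first pushes $f(H)$ into $H_1$ by a half-space-supported map, reducing to the case where $H_1^c\cap H_2^c$ contains a half-space. That two-case reduction is exactly what your construction must replace with the ``shrink $K$ to dodge $z_i$ and $\Phi(f)^{-1}(z_i)$'' argument. The remainder of your proof --- correcting $\varphi$ on $\partial K$ by an isotopy of the circle in a collar, then splitting $\tilde\varphi^{-1}f$ into pieces supported on $K$ and on $\overline{K^c}$ --- is sound, and is in fact more explicit about the boundary matching than the paper's own write-up; it yields the same count of three half-space-supported factors.
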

   
   \begin{proof}

     Let $f \in \Homeo^+(\Sigma)$. First, note that by Lemma
     \ref{lemma:selfsimhalfspaces}, any half-space supported homeomorphism
     is conjugate into $\Homeo(H, \partial H)$. Thus, it suffices to show
     $f$ is a product of at most $3$ half-space supported homeomorphisms.

     Let $H_1 = H$ and $H_2 = f(H)$. We now apply Lemma
     \ref{lemma:halfspaceintersections}, and first consider the case where
     $H_1^c \cap H_2^c$ contains a half-space. By Lemma
     \ref{lemma:selfsimhalfspaces}, $H_1^c \cap H_2^c$ contains two
     disjoint half-spaces $H_3, H_4$. Applying Lemma
     \ref{lemma:homeohalfspaces} to $H_1, H_2, \overline{H_3^c},$ and
     $H_4$, we see that there is a homeomorphism $\varphi_1$, supported on
     $\overline{H_3^c}$, such that $\varphi_1(H_2) = \varphi_1(f(H_1)) =
     H_1$. By further composing by some $\varphi_2$ supported on $H_1$, we
     can ensure $\varphi_2 \circ \varphi_1 \circ f$ restricts to the
     identity on $H_1$. Finally, composing by an appropriate third
     homeomorphism $\varphi_3$ supported on $\overline{H_1^c}$, we obtain
     $\varphi_3 \circ \varphi_2 \circ \varphi_1 \circ f = \text{Id}$. Note
     that $\varphi_2 \circ \varphi_1$ is supported on $\overline{H_3^c}$,
     so in this case we only require two half-space supported
     homeomorphisms.
    
     Now, suppose we are in the case where $H_1 \cap H_2^c$ contains  a
     half-space. By Lemma \ref{lemma:selfsimhalfspaces}, we can assume $H_1
     \cap H_2^c$ contains three disjoint half-spaces $H_3, H_4, H_5$. By
     Lemma \ref{lemma:homeohalfspaces} applied to $H_2, H_4,
     \overline{H_3^c},$ and $H_5$, there exists a homeomorphism $\psi$
     supported on $\overline{H_3^c}$ such that $\psi(H_2) = H_5$. Since
     $H_5 \subset H_1$, the subsurface $H_1^c \cap \psi(f(H_1))^c = H_1^c$
     contains a half-space, and we are reduced to the previous case. In
     this case, we see that $f$ is a product of three half-space supported
     homeomorphisms. \qedhere

   \end{proof}
   
   We now prove the required lemmas.
   
   \begin{proof}[Proof of Lemma \ref{lemma:halfspaceintersections}]

     By definition, $E(\overline{H_2^c})$ contains some maximal end $x$. By
     Corollary \ref{cor:nestedgoodhs}, there exist nested half-spaces $S_1
     \supset S_2 \supset \dots$ such that $\bigcap_i E(S_i) = \{x\}$. Since
     these half-spaces leave every compact set, eventually some $S_i$ does
     not intersect $\partial H_1 \cup \partial H_2$. Since $x \in
     E(\overline{H_2^c})$ and the $S_i$ are connected, either $S_i
     \subseteq H_1 \cap H_2^c$ or $S_i \subseteq H_1^c \cap H_2^c$.
     \qedhere

   \end{proof}
   
   \begin{proof}[Proof of Lemma \ref{lemma:selfsimhalfspaces}]

     The first statement follows immediately from the definition of
     half-space. Since a half-space has a maximal end of $\Sigma$, it has
     the same genus as $\Sigma$. Thus, by the classification of surfaces
     and Lemma \ref{lemma:allclopenshomeo}, any two half-spaces are
     homeomorphic. Since the (closures of) the complements are half-spaces
     too, we can map the complement to the complement and extend the
     homeomorphism to all of $\Sigma$.
    
     By assumption, $E(H)$ has some maximal end $x$ of $E(\Sigma)$. Since
     $E(H)$ is a clopen in $E(\Sigma)$ and the set of maximal ends is a
     Cantor set, $E(H)$ contains another distinct maximal end $y$. Using
     Corollary \ref{cor:nestedgoodhs} for $x$ and $y$ each and compactness
     of boundaries of half spaces, we can easily deduce the existence of
     the required half-spaces. \qedhere

   \end{proof}
   
   \begin{proof}[Proof of Lemma \ref{lemma:homeohalfspaces}]

     The presence of the half-space $H_4$ guarantees that $E(\overline{H_3
     \setminus H_1})$ and $E(\overline{H_3 \setminus H_2})$ both contain a
     maximal end. Thus, by Lemma \ref{lemma:allclopenshomeo},
     $$(E(\overline{H_3 \setminus H_1}), E^g(\overline{H_3 \setminus H_1}))
     \cong (E(\overline{H_3 \setminus H_2}), E^g(\overline{H_3 \setminus
     H_2})).$$ Clearly, the two subsurfaces have the same genus and finite
     number of boundary components, and so $\overline{H_3 \setminus H_1}
     \cong \overline{H_3 \setminus H_2}$. Similarly, by Lemma
     \ref{lemma:selfsimhalfspaces}, $H_1 \cong H_2$. By arranging the
     homeomorphisms to be identical on the overlapping boundary component,
     we produce a homeomorphism $H_3 \to H_3$ mapping $H_1 \to H_2$ and
     $\overline{H_3 \setminus H_1} \to \overline{H_3 \setminus H_2}$.
     \qedhere
     
   \end{proof}
	
   \begin{theorem} \label{thm:unmarkedhomeo}

     If $\Sigma$ is uniformly self-similar, then $\Homeo^+(\Sigma)$

     \begin{itemize}
       \item is normally generated by a single involution,
       \item is normally generated by an $H$-translation,
       \item is uniformly perfect.
      \end{itemize}
      
      Moreover, each element of $\, \Homeo^+(\Sigma)$ is a product of at
      most $3$ commutators, $6$ $H$-translations, and $12$ involutions.

   \end{theorem}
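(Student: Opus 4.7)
The plan is to package Lemma \ref{lem:involhavetrans}, Lemma \ref{lem:swindle}, and Theorem \ref{thm:halfspacehomeogen} into a single bookkeeping argument. Fix once and for all the involutions $\tau, \sigma$, the $H$-translation $\varphi = \sigma\tau$, and the half-space $H$ supplied by Lemma \ref{lem:involhavetrans}.

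The three normal generation statements follow by chaining the three inclusions already established. Theorem \ref{thm:halfspacehomeogen} gives $\Homeo^+(\Sigma) = \langle\langle \Homeo(H,\partial H) \rangle\rangle$; Lemma \ref{lem:swindle} gives $\Homeo(H,\partial H) \subseteq \langle\langle \varphi \rangle\rangle$; and Lemma \ref{lem:involhavetrans} exhibits $\varphi$ as a product of two conjugates of $\tau$, so $\varphi \in \langle\langle \tau \rangle\rangle$. Composing these inclusions yields $\Homeo^+(\Sigma) = \langle\langle \varphi \rangle\rangle = \langle\langle \tau \rangle\rangle$, which handles the two normal generation bullets.

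For the quantitative claims, let $f \in \Homeo^+(\Sigma)$. Theorem \ref{thm:halfspacehomeogen} writes $f = g_1 g_2 g_3$ with each $g_i = \eta_i h_i \eta_i^{-1}$ for some $\eta_i \in \Homeo^+(\Sigma)$ and $h_i \in \Homeo(H,\partial H)$. Lemma \ref{lem:swindle} exhibits each $h_i$ as a single commutator $[\hat h_i, \varphi^{-1}]$; conjugation preserves the commutator form, so each $g_i$ is a commutator and $f$ is a product of at most three commutators, proving uniform perfection with commutator width at most $3$. Expanding the bracket as $h_i = (\hat h_i \varphi^{-1} \hat h_i^{-1}) \cdot \varphi$ displays $h_i$, hence also $g_i$, as a product of two translations (a conjugate of $\varphi^{-1}$ followed by a conjugate of $\varphi$, each of which is a translation of the corresponding conjugate half-space); this gives the bound of $6$ translations. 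Finally, $\varphi = \sigma\tau$ and $\varphi^{-1} = \tau\sigma$ are each products of two involutions, and conjugation turns each such expression into a product of two (conjugate) involutions, replacing each of the $6$ translations by $2$ involutions for a total of at most $12$.

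There is no real obstacle here: all the structural work has been done in the preceding lemmas, and the argument is pure assembly plus counting. The only mild care required is to interpret ``$H$-translation'' flexibly enough to include conjugates of $\varphi^{\pm 1}$, which are translations of the conjugate half-spaces $\eta_i(H)$; once this is granted, the three counts $3$, $6$, $12$ fall out from the factorization $3 \times (1\text{ commutator}) = 3 \times (2\text{ translations}) = 3 \times (4\text{ involutions})$.
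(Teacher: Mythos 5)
Your proof is correct and matches the paper's argument, which likewise just combines Lemma \ref{lem:involhavetrans}, Lemma \ref{lem:swindle}, and Theorem \ref{thm:halfspacehomeogen}; your write-up simply makes explicit the bookkeeping ($3$ conjugates of half-space homeomorphisms, each a single commutator, each a product of two conjugates of $\varphi^{\pm 1}$, each of those a product of two involutions) that the paper leaves to the reader.
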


   \begin{proof}

      Combine Lemma \ref{lem:involhavetrans}, Lemma \ref{lem:swindle}, and
      Theorem \ref{thm:halfspacehomeogen}. \qedhere

   \end{proof}
	
   By considering quotients of $\Homeo^+(\Sigma)$ onto the mapping class
   group $\mcg(\Sigma)$ and the homeomorphism group of its ends space
   $(E(\Sigma),E^g(\Sigma))$, we also derive the following corollaries.
   Note that for a half-space $H \subset \Sigma$, $E(H)$ is a clopen set
   containing a non-empty proper subset of $M(E(\Sigma))$. 

   \begin{corollary}

     If $\Sigma$ is uniformly self-similar, then the statements of
     Theorem \ref{thm:unmarkedhomeo} also hold for $\mcg(\Sigma)$.

   \end{corollary}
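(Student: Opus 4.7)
The plan is to deduce the corollary directly from \thmref{unmarkedhomeo} by pushing everything through the quotient homomorphism $\pi \from \Homeo^+(\Sigma) \to \mcg(\Sigma)$, which is surjective by the discussion in \secref{preliminaries}. Since $\pi$ is a surjective group homomorphism, the image of a normal generating set is a normal generating set, and the image of a commutator (respectively, an involution, an $H$-translation) is again a commutator (respectively, has order dividing 2, has the $H$-translation property at the level of mapping classes). In particular, any bound on the number of commutators, involutions, or $H$-translations needed to express an element of $\Homeo^+(\Sigma)$ transfers to the same bound for $\mcg(\Sigma)$.

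The one delicate point is to verify that the specific normal generators used in \thmref{unmarkedhomeo} do not become trivial in the quotient. For the involution $\tau$ from \lemref{involhavetrans}, note that by construction $\tau$ swaps the two distinguished maximal ends $y$ and $z$ in $E(\Sigma)$, so $\Phi(\tau)$ is a non-trivial homeomorphism of the ends space; since $\mcg(\Sigma)$ acts through $\Phi$ on $E(\Sigma)$, the class $\pi(\tau)$ is non-trivial. As $\pi(\tau)^2 = \pi(\tau^2) = 1$, this class is genuinely an involution in $\mcg(\Sigma)$. Similarly, the $H$-translation $\varphi = \sigma \circ \tau$ sends $H$ to a half-space disjoint from $H$, so $\Phi(\varphi)(E(H)) \cap E(H) = \emptyset$, and hence $\pi(\varphi) \ne 1$; the pairwise disjointness of $\{\varphi^n(H)\}_{n \in \Z}$ in $\Sigma$ is a property of a representative and so $\pi(\varphi)$ remains an $H$-translation in the mapping class sense.

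With these two verifications in hand, each of the three bulleted conclusions of \thmref{unmarkedhomeo} transfers verbatim: $\mcg(\Sigma) = \pi(\Homeo^+(\Sigma))$ is normally generated by $\pi(\tau)$ (a single involution) and by $\pi(\varphi)$ (an $H$-translation), and it is uniformly perfect with the same quantitative bound of 3 commutators per element. The final quantitative statement — 3 commutators, 6 $H$-translations, and 12 involutions — similarly descends, since any factorization in $\Homeo^+(\Sigma)$ projects to a factorization of the same length in $\mcg(\Sigma)$. There is no real obstacle here beyond confirming non-triviality of the chosen generators in the quotient, which the explicit end-permuting construction of \lemref{involhavetrans} makes immediate.
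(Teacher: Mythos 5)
Your proposal is correct and matches the paper's (very brief) argument: the paper simply notes that the corollary follows "by considering quotients of $\Homeo^+(\Sigma)$ onto the mapping class group," exactly the transfer-through-the-surjection $\Homeo^+(\Sigma)\to\mcg(\Sigma)$ that you carry out. Your extra check that $\tau$ and $\varphi$ remain non-trivial in the quotient (via their action on the ends space) is a reasonable refinement the paper leaves implicit, but it is the same route.
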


   \begin{corollary}

     If $(E,F)$ is uniformly self-similar, then the statements of Theorem
     \ref{thm:unmarkedhomeo} also hold for $\Homeo(E,F)$, where a
     half-space $H \subset E$ is a clopen set containing a non-empty
     proper subset of $M(E)$.

   \end{corollary}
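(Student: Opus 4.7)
The plan is to transport the conclusions of Theorem~\ref{thm:unmarkedhomeo} across the surjective homomorphism induced by the action on ends. Realize $(E,F)$ as $(E(\Sigma),E^g(\Sigma))$ for a uniformly self-similar surface $\Sigma$: by Theorem~\ref{thm:classification}, take $\Sigma$ of genus $0$ when $F=\emptyset$, and of infinite genus when $F\neq\emptyset$ (this is consistent since, in the self-similar case, the maximal equivalence class is contained in $F$ whenever $F\neq\emptyset$). By Richards' theorem, the induced map $\Phi\colon\Homeo^+(\Sigma)\to\Homeo(E,F)$ is a surjective, continuous group homomorphism.

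Since surjective homomorphisms preserve normal generation by a given element and commutator/involution-length bounds, it suffices to verify that the distinguished normal generators supplied by Theorem~\ref{thm:unmarkedhomeo} descend to elements of the types claimed in the corollary. For the involution $\tau$ from Lemma~\ref{lem:involhavetrans}, the explicit construction interchanges the clopen ends-sets $U_i$ and $U_{-i-1}$, so $\Phi(\tau)$ is a nontrivial involution in $\Homeo(E,F)$. For the $H$-translation $\varphi$ of the corresponding half-space $H\subset\Sigma$, the set $E(H)$ is clopen in $E$ and contains a nonempty proper subset of $M(E)$, and the family $\Phi(\varphi)^n(E(H))=E(\varphi^n(H))$ is pairwise disjoint; thus $\Phi(\varphi)$ is a half-space translation in the sense of the corollary.

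Applying $\Phi$ to the factorizations furnished by Theorem~\ref{thm:unmarkedhomeo} then writes an arbitrary $g\in\Homeo(E,F)$ as a product of at most $3$ commutators, $6$ $E(H)$-translations, and $12$ involutions, and normal generation by $\Phi(\tau)$ and by $\Phi(\varphi)$ follows formally from normal generation by $\tau$ and $\varphi$ upstairs. The only risk in this pullback is that $\Phi(\tau)$ could be trivial, in which case the single-involution normal-generation statement would collapse; the one substantive point to verify is therefore that the specific $\tau$ built in Lemma~\ref{lem:involhavetrans} acts nontrivially on the ends, which is immediate from its construction as a swap of the prescribed clopen sets $U_i$.
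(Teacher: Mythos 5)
Your proposal is correct and follows the same route as the paper: realize $(E,F)$ as the ends space of a uniformly self-similar surface $\Sigma$ (genus $0$ or infinite according to whether $F$ is empty) and push the conclusions of Theorem~\ref{thm:unmarkedhomeo} through the surjection $\Homeo^+(\Sigma)\to\Homeo(E,F)$. The extra verifications you supply (that $\Phi(\tau)$ is a genuine involution and that $\Phi(\varphi)$ is a half-space translation for the clopen set $E(H)$) are sound but left implicit in the paper; note only that the existence of the realizing surface is Richards' realization theorem rather than the classification theorem.
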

   
   \begin{proof}

      By \cite[Theorem 2]{Ric63}, there is a surface $\Sigma$ such that
      $(E(\Sigma), E^g(\Sigma)) \cong (E, F)$ and the genus is $0$ if $F =
      \emptyset$ and infinite if $F \neq \emptyset$. Thus $\Sigma$ is
      uniformly self-similar when $(E,F)$ is. The corollary then follows
      from Theorem \ref{thm:unmarkedhomeo} and surjectivity of
      $\Homeo^+(\Sigma) \to \Homeo(E(\Sigma), E^g(\Sigma))$. \qedhere

   \end{proof}

\section{Surfaces with a marked point}

  \label{sec:genmarked}

  The proof in the case of a marked surface is very similar to the unmarked
  case, and we will use some of the same lemmas. Let $\Sigma$ be a
  uniformly self-similar surface with a fixed basepoint $\ast \in \Sigma$.
  We define half-space exactly as before, but distinguish between
  \textit{marked half-spaces} which contain $\ast$ and \textit{unmarked
  half-spaces} which don't. The main new lemma we require is the following.

  \begin{lemma} \label{lemma:haveDTs}

    Let $\Sigma$ be a uniformly self-similar surface with a marked point
    $\ast \in \Sigma$. Let $H \subseteq \Sigma$ be an unmarked half-space.
    Then, every Dehn twist in $\mcg(\Sigma, \ast)$ is contained in the
    normal closure of $\mcg(H, \partial H)$.

  \end{lemma}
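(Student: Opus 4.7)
The plan is to use the lantern relation to write $T_\alpha$ as a product of six Dehn twists, each supported on some unmarked half-space of $\Sigma$, and then to conjugate each factor into $\mcg(H, \partial H)$ using (a strengthening of) Lemma \ref{lemma:selfsimhalfspaces}. First I would dispose of two easy situations: if $\alpha$ bounds a disk (possibly containing $\ast$), then $T_\alpha$ is trivial in $\mcg(\Sigma, \ast)$; and if $\alpha$ is separating with the side not containing $\ast$ already forming an unmarked half-space of $\Sigma$, then $T_\alpha$ is directly supported on an unmarked half-space and the claim follows immediately by conjugation. In the remaining cases, one genuinely needs the lantern.

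Next, I would construct a 4-holed sphere $P \subset \Sigma \setminus \{\ast\}$ with $\alpha = b_1$ as one of its four boundary components, such that the other three boundary components $b_2, b_3, b_4$ bound pairwise disjoint unmarked half-spaces $Q_2, Q_3, Q_4$ of $\Sigma$. Uniform self-similarity makes this possible: the set of maximal ends is a Cantor set, so using Lemmas \ref{lemma:clopensfromsurfs} and \ref{lemma:allclopenshomeo} I can produce three disjoint unmarked half-spaces whose boundaries avoid a neighborhood of $\alpha \cup \{\ast\}$ and whose complements still carry maximal ends. The surface $P$ is then assembled by taking two pairs of pants glued along a common curve inside $\Sigma \setminus (\{\ast\} \cup Q_2 \cup Q_3 \cup Q_4)$, a subsurface whose topology is rich enough (again by self-similarity) to accommodate this.

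Applying the lantern relation in $P$ yields
\[T_\alpha = T_{x_{12}} T_{x_{13}} T_{x_{14}} T_{b_4}^{-1} T_{b_3}^{-1} T_{b_2}^{-1},\]
where $x_{12}, x_{13}, x_{14}$ are the three interior lantern curves. By construction $T_{b_i}$ is supported on $Q_i$ for $i = 2, 3, 4$. For each interior curve $x_{ij}$, the subsurface of $\Sigma$ on the side not containing $\ast$ consists of a pair of pants cut from $P$ together with two of the $Q_k$; this subsurface has a single connected compact boundary $x_{ij}$, contains maximal ends (from the two $Q_k$), and has complement carrying maximal ends as well (from $Q_1 \ni \ast$-side plus the remaining $Q_l$), so it is itself an unmarked half-space, and $T_{x_{ij}}$ is supported there. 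Finally, Lemma \ref{lemma:selfsimhalfspaces} provides an ambient homeomorphism carrying any unmarked half-space onto $H$; composing with a homeomorphism of the complementary (marked) half-space that acts transitively on interior points, one can arrange this homeomorphism to fix $\ast$. Thus each of the six factors is conjugate in $\mcg(\Sigma, \ast)$ into $\mcg(H, \partial H)$, completing the proof.

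The main obstacle will be the second step, namely the explicit construction of the 4-holed sphere $P$ with all the required separation properties. Choosing the three outer unmarked half-spaces $Q_2, Q_3, Q_4$ is straightforward given the abundance of maximal ends, but arranging them together with $\alpha$ to cobound a 4-holed sphere disjoint from $\ast$ requires care, especially when $\alpha$ is non-separating. One must build pairs of pants inside carefully controlled subsurfaces of $\Sigma$, and verify that no stray ends or genus get absorbed into $P$ in a way that would invalidate the lantern setup.
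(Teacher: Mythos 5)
Your central idea---reduce $T_\alpha$ to the lantern relation and check that each of the six twist curves bounds an unmarked half-space---is the same as the paper's, and your analysis of the interior curves and the final conjugation into $\mcg(H,\partial H)$ is essentially correct. The gap is in the case analysis. The configuration you propose forces $\alpha$ to be separating: since each $Q_i$ is a half-space with $\partial Q_i = b_i$ and is disjoint from $P$, the union $W = P \cup Q_2 \cup Q_3 \cup Q_4$ is a subsurface with $\partial W = \alpha$ and $\ast \notin W$, so $\alpha$ bounds $W$ against its complement. Hence for nonseparating $\alpha$ the lantern setup you describe cannot exist at all---this is not a matter of ``care'' in building $P$. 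Moreover, even for separating $\alpha$, the side away from $\ast$ must contain maximal ends in order to house $Q_2, Q_3, Q_4$; so your lantern only applies when $\alpha$ is separating and its unmarked side carries maximal ends, and if the marked side also carries maximal ends you are already in your easy case (b).

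The two cases left uncovered are exactly the ones the paper handles by elementary surgeries rather than the lantern: (i) $\alpha$ nonseparating, where one takes any half-space containing $\alpha$ and, if it is marked, deletes a regular neighborhood of an arc from its boundary to $\ast$ avoiding $\alpha$ (possible precisely because $\alpha$ does not separate), producing an unmarked half-space containing $\alpha$; and (ii) $\alpha$ separating with the unmarked side containing no maximal ends, where one joins that side by a strip avoiding $\ast$ to an unmarked half-space on the other side. The lantern is genuinely needed only when $\alpha$ separates off a \emph{marked} piece with no maximal ends; there your argument works, modulo the deferred verification that the unmarked side decomposes as $P \cup Q_2 \cup Q_3 \cup Q_4$ (the paper avoids this by taking the lantern to be a regular neighborhood of $\alpha \cup \partial H_1 \cup \partial H_2$ together with two connecting arcs, letting the fourth boundary curve bound whatever half-space remains). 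You should add the two missing cases with their direct arguments before invoking the lantern.
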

  
  \begin{remark}

    For convenience and simplicity, we will conflate half-spaces and simple
    closed curves with their ambient isotopy classes rel $\ast$ throughout
    this section.

  \end{remark}
  
  \begin{proof}

    Let $T_\gamma \in \mcg(\Sigma, \ast)$ be the Dehn twist about a simple
    closed curve $\gamma$ (which avoids $\ast$). First, we consider the
    case where $\gamma$ is nonseparating. Then, $\Sigma$ has infinite
    genus. Since $\gamma$ is compact, Corollary \ref{cor:nestedgoodhs}
    implies that $\gamma$ is contained in some half-space (or the closure
    of its complement which is also a half-space) which we denote by $H$.
    This case is concluded if $H$ is unmarked. Suppose instead $H$ is
    marked. Then, since $\gamma$ is nonseparating, we can find a path from
    $\partial H$ to $\ast$ which avoids $\gamma$. Deleting some small
    regular neighborhood of this path from $H$, we obtain an unmarked
    half-space containing $\gamma$.
    
    Now, suppose $\gamma$ is a separating curve, and let $S_1, S_2
    \subseteq \Sigma$ be the two surfaces on either side of $\gamma$. If
    both $E(S_1), E(S_2)$ contain a maximal end of $\Sigma$, then they are
    both half-spaces whose mapping class groups contain $T_\gamma$, and one
    must be unmarked. Suppose, w.l.o.g., then that $E(S_1)$ contains no
    maximal ends. If $S_1$ is also unmarked, then we can connect it by some
    strip (avoiding $\ast)$ to an unmarked half-space in $S_2$ to create a
    new unmarked half-space $H'$ which contains $S_1$. Then $\Homeo(H',
    \partial H')$ contains $T_\gamma$.
    
    The difficult case is when $S_1$ contains no maximal ends but is
    marked. Using Corollary \ref{cor:nestedgoodhs} repeatedly, we can find
    three disjoint half-spaces $H_1, H_2, H_3$ contained in $S_2$. Since
    half-spaces have connected boundary, the complement of $H_1 \cup H_2
    \cup H_3 \cup S_1$ is connected, and we may choose disjoint paths
    $\alpha_1, \alpha_2$ in this complement connecting $\gamma = \partial
    S_1$ to $\partial H_1, \partial H_2$ respectively. Let $L$ be a regular
    neighborhood of $\gamma \cup \partial H_1 \cup \partial H_2 \cup
    \alpha_1 \cup \alpha_2$ in this complement. Then, $L$ is a sphere with
    $4$ boundary components, i.e.\ a lantern, where three boundary curves
    are $\gamma, \partial H_1,$ and $\partial H_2$ and the fourth is some
    simple closed curve $\beta$ bounding a half-space $H_4$ containing
    $H_3$.
    
    We seek to use the lantern relation to show that $f$ is a product of
    homeomorphisms supported on an unmarked half-space. The lantern
    relation implies that $T_\gamma$ is equal to a word in the Dehn twists
    about $\partial H_1, \partial H_2, \beta$ and three other simple closed
    curves $\delta_1, \delta_2, \delta_3$ each of which separates $L$ into
    two three-holed spheres. Thus for all $i = 1, 2, 3$, each side of
    $\delta_i$ must contain at least one of $H_1, H_2, H_3$, i.e.\ each
    $\delta_i$ separates $\Sigma$ into one marked and one unmarked
    half-space, and thus $\delta_i$ lies in an unmarked half-space.
    Consequently, the twists about $\partial H_1, \partial H_2, \beta,$ and
    the $\delta_i$ are all supported on an unmarked half-space. The lemma
    follows. \qedhere

  \end{proof}
  
  In the unmarked case, we replace Lemma \ref{lemma:homeohalfspaces} with
  the following.

  \begin{lemma} \label{lemma:homeohalfspacesv2}

    If $H_1, H_2,$ and $H_3$ are disjoint unmarked half-spaces, then there
    is a homeomorphism $\varphi \in \Homeo(\Sigma)$ supported on some
    unmarked half-space $H_4$ such that $\varphi(H_1) = H_2$.   		

  \end{lemma}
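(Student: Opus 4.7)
The plan is to reduce the statement to Lemma \ref{lemma:homeohalfspaces} by constructing an unmarked half-space $H_4$ that contains $H_1$ and $H_2$ together with a fourth half-space disjoint from both; Lemma \ref{lemma:homeohalfspaces} will then directly produce a homeomorphism $\varphi$ supported on $H_4$ with $\varphi(H_1) = H_2$, and unmarkedness of $H_4$ gives the conclusion.

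First I will use Lemma \ref{lemma:selfsimhalfspaces} to split $H_3$ into two disjoint unmarked sub-half-spaces $H_3'$ and $H_3''$. The sub-half-space $H_3'$ will sit inside $H_4$ and serve as the ``fourth'' half-space required by Lemma \ref{lemma:homeohalfspaces}, while $H_3''$ will sit outside $H_4$ and provide the maximal end needed in $E(\overline{H_4^c})$. So the goal becomes: among the four pairwise disjoint unmarked half-spaces $H_1, H_2, H_3', H_3''$, build an unmarked half-space $H_4$ containing $H_1, H_2, H_3'$ and disjoint from $H_3''$.

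To do this I will let $K = \overline{\Sigma \setminus (H_1 \cup H_2 \cup H_3' \cup H_3'')}$, a connected subsurface containing $\ast$ (connectedness follows because removing a subsurface with a single connected boundary from a connected surface preserves connectedness, iterated four times). Inside $K \setminus \{\ast\}$ I will choose two disjoint simple arcs $\alpha_{12}$ from $\partial H_1$ to $\partial H_2$ and $\alpha_{2,3'}$ from $\partial H_2$ to $\partial H_3'$, and attach thin bands $T_{12}, T_{2,3'}$ around them, taken small enough to miss $\ast$ and $H_3''$. Setting
\[
    H_4 = H_1 \cup H_2 \cup H_3' \cup T_{12} \cup T_{2,3'},
\]
each band merges two of the circles $\partial H_1, \partial H_2, \partial H_3'$ into one, so after adding both bands the three original boundary circles collapse into a single connected compact boundary $\gamma$. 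Since $H_4$ avoids $\ast$ and $H_3''$, and since $E(H_4) \supseteq E(H_1)$ and $E(\overline{H_4^c}) \supseteq E(H_3'')$ each contain a maximal end, $H_4$ is an unmarked half-space by Definition \ref{def:halfspaces}. Applying Lemma \ref{lemma:homeohalfspaces} to $H_1, H_2 \subset H_4$ together with $H_3' \subset H_4$ then produces the desired $\varphi$ supported on $H_4$.

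The main obstacle I anticipate is the geometric verification that $H_4$ has a single connected boundary circle. This is essentially a ribbon-graph calculation: the graph $\partial H_1 \cup \partial H_2 \cup \partial H_3' \cup \alpha_{12} \cup \alpha_{2,3'}$ has Euler characteristic $-2$, and one has to check that its regular neighborhood in $K$ is a planar four-holed sphere rather than a higher-genus surface. This is arrangeable because each arc meets each boundary circle transversely at a single point, so the cyclic order of edges at every vertex of the graph is forced; three of the four resulting boundary components of the ribbon neighborhood cancel with $\partial H_1, \partial H_2, \partial H_3'$ upon gluing, leaving the single curve $\gamma$ as claimed.
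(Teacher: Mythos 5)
Your proposal is correct and follows essentially the same route as the paper: split $H_3$ into two disjoint unmarked half-spaces via Lemma \ref{lemma:selfsimhalfspaces}, use connectedness of the complement to attach $H_1$, $H_2$, $H_3'$ by two strips avoiding $\ast$ and $H_3''$ into a single-boundary unmarked half-space $H_4$, and then apply Lemma \ref{lemma:homeohalfspaces}. (The ribbon-graph worry is unnecessary: each band joins two distinct boundary circles, so each attachment reduces the boundary count by exactly one.)
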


  \begin{proof}

    By Lemma \ref{lemma:selfsimhalfspaces}, $H_3$ contains two unmarked
    disjoint half-spaces $H_3', H_3''$. Since half-spaces have single
    boundary components, the complement of $H_1 \cup H_2 \cup H_3' \cup
    H_3''$ is connected, and we can attach $H_1$ to $H_2$ and $H_3'$ by two
    strips disjoint from $H_3''$ and the marked point to create a
    subsurface $H_4$ with a single boundary circle that contains $H_1,
    H_2,$ and $H_3'$. Since both $E(H_4) \supset E(H_1)$ and $E(H_4^c)
    \supset E(H_3'')$ contain a maximal end, $H_4$ is a half-space. We can
    now apply Lemma \ref{lemma:homeohalfspaces} to $H_1, H_2, H_4,$ and
    $H_3'$. \qedhere

  \end{proof}
  
  We can now prove the analogous theorem that half-space supported
  homeomorphisms generate $\mcg(\Sigma, \ast)$.
  
  \begin{theorem} \label{thm:unmarkedhalfspacegen}

    Let $\Sigma$ be a uniformly self-similar surface with a marked point
    $\ast \in \Sigma$, and let $H \subseteq \Sigma$ be an unmarked
    half-space. Then, $\mcg(\Sigma, \ast)$ is generated by the normal
    closure of $\mcg(H, \partial H)$.

  \end{theorem}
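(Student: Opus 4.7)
The strategy is to show that every $f \in \mcg(\Sigma, \ast)$ lies in the normal closure $N$ of $\mcg(H, \partial H)$ by reducing $f$ modulo $N$ to a compactly supported mapping class. By Lemma \ref{lemma:haveDTs}, $N$ contains every Dehn twist, and since the mapping class group of any compact subsurface of $\Sigma$ is generated by Dehn twists (by the finite-type theory), any mapping class of compact support lies in $N$. Thus it suffices to reduce $f$ modulo $N$ to an element of compact support.

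I would first establish a decomposition $\Sigma = H_L \cup K \cup H_R$, where $H_L, H_R$ are disjoint unmarked half-spaces and $K$ is a compact subsurface containing $\ast$, with $\partial K = \partial H_L \sqcup \partial H_R$. This is possible because the Cantor set $M(E)$ can be partitioned into two nonempty clopen subsets; extending each to a clopen of $E(\Sigma)$ and applying Lemma \ref{lemma:clopensfromsurfs} yields two boundary curves, which can be chosen disjoint and cobounding a compact region containing $\ast$.

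Since $f$ fixes $\ast$, both $f(H_L)$ and $f(H_R)$ are disjoint unmarked half-spaces. Mimicking the proof of Theorem \ref{thm:halfspacehomeogen} but using Lemma \ref{lemma:homeohalfspacesv2} in place of Lemma \ref{lemma:homeohalfspaces}, I would find $\psi_1 \in N$, supported on an unmarked half-space, such that $\psi_1 f(H_L) = H_L$. Then, by running the analogous argument inside the half-space $\overline{H_L^c}$, I would produce $\psi_2 \in N$ supported on an unmarked half-space disjoint from $H_L$ with $\psi_2 \psi_1 f(H_R) = H_R$. Setting $g := \psi_2 \psi_1 f$, the mapping class $g$ preserves the decomposition $\Sigma = H_L \cup K \cup H_R$, and so (up to isotopy fixing $\ast$) can be written as $g = g_L \cdot g_K \cdot g_R$ with supports in $H_L, K, H_R$, respectively. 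By Lemma \ref{lemma:selfsimhalfspaces}, $g_L$ and $g_R$ are conjugate to elements of $\mcg(H, \partial H) \subset N$, while $g_K$ has compact support and therefore lies in $N$ by the first paragraph. Hence $f \in N$.

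The main obstacle I anticipate is executing the half-space reduction above using only unmarked half-spaces: this requires a case analysis paralleling that of Theorem \ref{thm:halfspacehomeogen} (depending on whether $H_L$ and $f(H_L)$ intersect), with every auxiliary half-space chosen to be unmarked, which is possible by the abundance of disjoint unmarked half-spaces guaranteed by Lemma \ref{lemma:selfsimhalfspaces} together with uniform self-similarity. A secondary subtlety is ensuring that the second application (for $H_R$) does not undo the first (for $H_L$); this is handled by localizing the proof of Lemma \ref{lemma:homeohalfspacesv2} to $\overline{H_L^c}$, choosing all strips inside $\overline{H_L^c}$, which remains connected after removing finitely many disjoint half-spaces.
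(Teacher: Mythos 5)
Your overall architecture matches the paper's: decompose $\Sigma$ into two unmarked half-spaces and a compact middle piece containing $\ast$, reduce $f$ modulo $N$ until it preserves the decomposition, and then handle the half-space pieces by conjugation into $\mcg(H,\partial H)$ and the compact piece by Dehn twists via Lemma \ref{lemma:haveDTs}. The first reduction (finding $\psi_1$ with $\psi_1 f(H_L)=H_L$) is carried out in the paper essentially as you describe. However, there is a genuine gap in your second reduction. Write $h=\psi_1 f$. Since $h(H_L)=H_L$, the half-space $h(H_R)$ lies in $\overline{H_L^c}=K\cup H_R$ and satisfies $E(h(H_R))=E(H_R)=E(\overline{H_L^c})$. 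Consequently there is \emph{no} half-space in $\overline{H_L^c}$ disjoint from both $H_R$ and $h(H_R)$: any half-space must contain a maximal end, and every maximal end of $\overline{H_L^c}$ already lies in $E(H_R)=E(h(H_R))$. For the same reason $H_R$ and $h(H_R)$ cannot be disjoint, and both $H_R\cap h(H_R)^c$ and $H_R^c\cap h(H_R)^c\cap H_L^c$ are compact. So neither Lemma \ref{lemma:halfspaceintersections} nor Lemma \ref{lemma:homeohalfspacesv2} (which requires three pairwise disjoint half-spaces) can be ``localized to $\overline{H_L^c}$'' as you propose; applied in $\Sigma$ they only produce auxiliary half-spaces inside $H_L$, which is exactly the region your $\psi_2$ must avoid. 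This is not a bookkeeping issue: a homeomorphism supported on an unmarked half-space disjoint from $H_L$ and taking $h(H_R)$ to $H_R$ exists only after one checks a genus-matching condition on the compact region between $\partial h(H_R)$ and $\partial H_R$, and your argument never produces that check.

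The paper confronts precisely this point. After the first reduction it observes that the image curve $C'=g_1(f(C))$ cobounds with $\partial H_1$ an annulus containing $\ast$, but \emph{need not be isotopic to $C$ rel $\ast$}. It then places $C$ and $C'$ in a common compact subsurface $S$ with $\ast\in S$, uses a change-of-coordinates argument inside $S$ (where the genus comparison is automatic) to find an element of $\mcg(S,\partial S\cup\ast)$ taking $C'$ to $C$, and notes that this element is a product of Dehn twists, hence lies in $N$ by Lemma \ref{lemma:haveDTs}. In other words, the second reduction is accomplished not by half-space-supported homeomorphisms but by compactly supported mapping classes fixing $\ast$ --- which is the real reason Lemma \ref{lemma:haveDTs} is indispensable here, beyond merely absorbing your final factor $g_K$. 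To repair your proof, replace the claimed $\psi_2$ by such a Dehn-twist correction confined to a compact subsurface of $\overline{H_L^c}$ containing $\ast$ and the symmetric difference of $\partial H_R$ and $h(\partial H_R)$ (or, alternatively, supply the genus-counting argument showing the correction can be supported on a single unmarked half-space containing $H_R\cup h(H_R)$). With that change the remainder of your argument goes through.
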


  \begin{proof}
    
    Let $f \in \mcg(\Sigma, \ast)$. All unmarked half-spaces are the same
    up to $\Homeo(\Sigma, \ast)$ by an argument nearly identical to that in
    the proof of Lemma \ref{lemma:selfsimhalfspaces}. Therefore, it
    suffices to show $f$ is a product of mapping classes supported on
    unmarked half-spaces.
    
    Let $H_1$ be an unmarked half-space, and let $C$ be a simple closed
    curve such that $C$ and $\partial H_1$ bound an annulus containing
    $\ast$ (in the interior). Let $H_2 = f(H_1)$. Then, by Lemma
    \ref{lemma:halfspaceintersections}, either $H_1 \cap H_2^c$ or $H_1^c
    \cap H_2^c$ contains a half-space, which we can choose to be unmarked,
    by passing to a deeper half-space if necessary. 
    
    We first show that there is some mapping class $g$ in the normal
    closure of $\mcg(H, \partial H)$ such that $g_1(f(H_1)) = H_1$ and $g_1
    \circ f|_{H_1} = \id|_{H_1}$. Let's first consider the case where
    $H_1^c \cap H_2^c$ contains an unmarked half-space. Then, by Lemma
    \ref{lemma:selfsimhalfspaces}, $H_1^c \cap H_2^c$ contains two disjoint
    unmarked half-spaces $H_3, H_4$. By Lemma
    \ref{lemma:homeohalfspacesv2}, there are two mapping classes supported
    on some unmarked half-spaces, one which maps $H_2$ to $H_3$ and another
    which maps $H_3$ to $H_1$. By composing these maps with some
    appropriate third mapping class supported on $H_1$, we obtain the
    desired $g_1$. If instead $H_1 \cap H_2^c$ contains an unmarked
    half-space, then $H_1 \cap H_2^c$ contains two disjoint unmarked
    half-spaces $H_3, H_4$. By Lemma \ref{lemma:homeohalfspacesv2}, there
    is some mapping class $h$ supported on an unmarked half-space such that
    $h(f(H_1)) = h(H_2) = H_3 \subseteq H_1$. Thus $H_1^c \cap H_3^c =
    H_1^c$ contains an unmarked half-space, the one bounded by $C$, and we
    are reduced to the first case.
    
    Let $C' = g_1(f(C))$. Then $C'$ and $\partial H_1$ bound an annulus
    containing $\ast$. (Note that $C'$ need not be $C$ up to ambient
    isotopy fixing $\ast$.) Let $S \subseteq \Sigma$ be a compact
    subsurface with the following properties.
    \begin{itemize}
    \item $S$ contains the annulus bounded by $\partial H_1$ and $C$
    and the annulus bounded by $\partial H_1$ and $C'$.
    \item $S$ does not intersect the interior of $H_1$
    \item no boundary component bounds a disc in $\Sigma$. 	
    \end{itemize}
    Within $S$, each of $C, C'$ is a separating curve which bounds an
    annulus with $\partial H_1 \subset \partial S$ containing a marked
    point. Consequently, the genus of the separating curves $C,
    C'$ must be identical and they partition the boundary of $S$
    identically. Thus, there is some mapping class in $\mcg(S, \partial S
    \cup {\ast})$ mapping $C'$ to $C$. Since $\mcg(S, \partial S \cup
    {\ast})$ is generated by Dehn twists, by Lemma \ref{lemma:haveDTs},
    there is some $g_2$ in the normal closure of $\mcg(H, \partial H)$ such
    that $g_2(g_1(f(C)) = C$ and $g_2 \circ g_1 \circ f|_{H_1} =
    \id|_{H_1}$.
    
    Let $H_0$ be the unmarked half-space bounded by $C$. Clearly,
    $g_2(g_1(f(H_0)) = H_0$. Since the mapping class group of the annulus
    with a marked point between $H_0$ and $H_1$ is generated by Dehn
    twists, by Lemma \ref{lemma:haveDTs}, we can compose by some third
    element $g_3$ in the normal closure of $\mcg(H, \partial H)$ such that
    $g_3 \circ g_2 \circ g_1 \circ f = \id$. \qedhere

  \end{proof}
  
  We can now easily prove the analogous theorem for the mapping class group
  of a marked uniformly self-similar surface. Note that we have no
  statements about uniform perfection, or about a bound on the word length
  of an element as a product of involutions, or about the homeomorphism
  group. The first two are impossible by a result of J. Bavard \cite{Bav16}
  in the case of $S^2$ minus a Cantor set. The proof fails to show every
  element is a word of uniformly bounded length in involutions and
  half-space supported homeomorphisms only because of the step where we map
  $C'$ to $C$. The theorem is only proven for the mapping class group and
  not the homeomorphism group because we use the lantern relation in the
  proof of Lemma \ref{lemma:haveDTs}.
  
  \begin{theorem}

    Let $\Sigma$ be a uniformly self-similar surface with a marked point
    $\ast \in \Sigma$. Then, $\mcg(\Sigma, \ast)$ is generated by
    involutions. Moreover, $\mcg(\Sigma, \ast)$ is normally generated by a
    single involution and is a perfect group.

  \end{theorem}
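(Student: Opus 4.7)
The plan is to deduce this theorem from three tools already in hand: Theorem~\ref{thm:unmarkedhalfspacegen} (which reduces $\mcg(\Sigma,\ast)$ to the normal closure of $\mcg(H,\partial H)$ for an unmarked half-space $H$), Lemma~\ref{lem:involhavetrans} (which produces an involution $\tau$ and an $H$--translation $\varphi$ with $\varphi$ a product of two conjugates of $\tau$), and the swindle of Lemma~\ref{lem:swindle} (which writes every element of $\mcg(H,\partial H)$ as a single commutator with $\varphi^{-1}$). The main task is to arrange these objects so that they live in $\mcg(\Sigma,\ast)$ rather than merely in $\mcg(\Sigma)$.

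To do this I would use the ``moreover'' clause of Lemma~\ref{lem:involhavetrans}: that construction produces $\tau$ and $\varphi$ with a common fixed point $p$ lying in the complement of $\bigcup_{n \in \Z}\varphi^n(H)$. Since $\Homeo^+(\Sigma)$ acts transitively on $\Sigma$, there is some $\psi \in \Homeo^+(\Sigma)$ with $\psi(p)=\ast$; replacing $\tau,\varphi,H$ by their $\psi$--conjugates yields an involution and an $H$--translation representing elements of $\mcg(\Sigma,\ast)$, with $H$ still an unmarked half-space, the relation $\varphi \in \langle\langle \tau \rangle\rangle$ still valid, and the translates $\varphi^n(H)$ still pairwise disjoint and disjoint from $\ast$.

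Next I would rerun the swindle of Lemma~\ref{lem:swindle} in the marked category. Because every $\varphi^i(H)$ avoids $\ast$, each factor $\varphi^{-i} f \varphi^i$ of the infinite product $\hat f = \prod_{i\geq 0}\varphi^{-i} f \varphi^i$ fixes $\ast$, so $\hat f$ is a well-defined homeomorphism fixing $\ast$ and the identity $f = [\hat f, \varphi^{-1}]$ descends to a commutator identity in $\mcg(\Sigma,\ast)$. Thus every element of $\mcg(H,\partial H)$ is a single commutator in $\mcg(\Sigma,\ast)$ and in particular lies in the normal closure of $\varphi$. Chaining the three normal-closure inclusions
\[
\mcg(\Sigma,\ast) \;=\; \langle\langle \mcg(H,\partial H)\rangle\rangle \;\subseteq\; \langle\langle \varphi \rangle\rangle \;\subseteq\; \langle\langle \tau \rangle\rangle,
\]
gives normal generation by the single involution $\tau$, and hence generation by its involutive conjugates. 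For perfection, observe that every element of $\mcg(\Sigma,\ast)$ is a product of conjugates of elements of $\mcg(H,\partial H)$; each of the latter is a commutator by the swindle, and a conjugate of a commutator is again a commutator, so $\mcg(\Sigma,\ast)$ equals its commutator subgroup.

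The only delicate point I expect is verifying that the swindle's infinite product behaves well rel $\ast$ — this is why the placement of $\ast$ at a fixed point of $\varphi$ in the complement of all $\varphi^n(H)$ is crucial. Once that is secured, each step is essentially the marked analogue of the proof of Theorem~\ref{thm:unmarkedhomeo}; the loss of uniform perfection and of a uniform bound on involution length is forced by the appeal to Theorem~\ref{thm:unmarkedhalfspacegen}, whose proof uses Dehn twists in a compact subsurface $S$ whose complexity cannot be bounded in terms of $f$ alone.
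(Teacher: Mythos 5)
Your proposal is correct and follows essentially the same route as the paper: invoke the fixed-point clause of Lemma~\ref{lem:involhavetrans} to place $\tau$ and $\varphi$ in the marked category, rerun the swindle of Lemma~\ref{lem:swindle} rel $\ast$ (which works because all $\varphi^n(H)$ avoid the marked point), and finish with Theorem~\ref{thm:unmarkedhalfspacegen}. The only difference is presentational: you conjugate the fixed point onto $\ast$ explicitly and spell out the perfection argument, both of which the paper leaves implicit.
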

  
  \begin{proof}

    Let $\tau$ and  $\varphi$ be as in Lemma \ref{lem:involhavetrans}.
    Lemma \ref{lem:swindle} applies equally to $\mcg(\Sigma, \ast)$ (with
    an identical proof), and so $\langle \langle \tau \rangle \rangle$
    contains $\mcg(H, \partial H)$ for some unmarked half-space and all
    elements of $\mcg(H, \partial H)$ are a single commutator in
    $\mcg(\Sigma)$. Theorem \ref{thm:unmarkedhalfspacegen} finishes the
    proof. \qedhere

  \end{proof}

\section{Self-similar but not uniformly}

  \label{sec:bigabel}

  It is natural to wonder whether surfaces with a self-similar ends space
  and with genus $0$ or $\infty$ are generated by involutions, are perfect,
  etc. It is already known that the mapping class group of the one-ended,
  infinite genus surface has abelianization containing an uncountable
  direct sum of $\Q$'s \cite{DD21}. This surface fits into this category,
  but perhaps is not a particularly compelling example since the results of
  \cite{DD21} are for pure mapping class groups of infinite-type surfaces,
  and for the one-ended infinite genus surface, the mapping class group
  happens to coincide with the pure mapping class group. However, using a
  covering trick and some of the results of \cite{DD21}, we can prove that
  the abelianization of $\mcg(\R^2 \setminus \N)$ is similarly large.

  \begin{proposition} \label{prop:flute}
    
    $\mcg(\R^2 \setminus \N)$ surjects onto $\bigoplus_{2^{\aleph_0}} \Q$.

  \end{proposition}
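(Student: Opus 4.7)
The idea is to lift the Domat--Dickmann construction through an infinite-type double cover of $\R^2 \setminus \N$. Let $\varphi \from \pi_1(\R^2 \setminus \N) \to \Z/2$ be the canonical surjection sending every loop around a puncture to $1$; it factors through $H_1(\R^2 \setminus \N;\Z) = \bigoplus_{\N}\Z$ and, since it records only the parity of total linking with punctures, it is preserved by every self-homeomorphism of $\R^2 \setminus \N$. Let $\pi \from \widetilde\Sigma \to \R^2 \setminus \N$ be the associated unbranched connected double cover, with deck involution $\iota$ acting freely. A short Euler characteristic computation on lifts of the finite truncations $\R^2 \setminus \{1,\dots,n\}$ shows the genus of $\widetilde\Sigma$ is infinite, so $\widetilde\Sigma$ is an infinite-type surface to which \cite{DD21} applies.

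Because $\varphi$ is invariant under self-homeomorphisms, every $f \in \Homeo^+(\R^2 \setminus \N)$ lifts to a homeomorphism of $\widetilde\Sigma$ commuting with $\iota$, unique up to composition with $\iota$. Passing to isotopy classes yields an exact sequence
\[ 1 \to \langle [\iota]\rangle \to \mcg_\iota(\widetilde\Sigma) \to \mcg(\R^2 \setminus \N) \to 1, \]
where $\mcg_\iota(\widetilde\Sigma)$ denotes isotopy classes of $\iota$-equivariant homeomorphisms. Since the kernel $\langle [\iota]\rangle$ has order at most two and the group $\bigoplus_{2^{\aleph_0}}\Q$ is torsion-free, any surjection $\mcg_\iota(\widetilde\Sigma) \twoheadrightarrow \bigoplus_{2^{\aleph_0}}\Q$ will descend to a surjection $\mcg(\R^2 \setminus \N) \twoheadrightarrow \bigoplus_{2^{\aleph_0}}\Q$.

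It therefore remains to construct a surjection $\mcg_\iota(\widetilde\Sigma) \twoheadrightarrow \bigoplus_{2^{\aleph_0}}\Q$ by running the argument of \cite{DD21} equivariantly. Their construction produces this summand of $\pmcg(\widetilde\Sigma)^{\mathrm{ab}}$ from countable families of disjoint subsurfaces in the infinite-genus region, each supporting a fractional handle-shift. Since $\iota$ acts freely, I can select each supporting subsurface as a disjoint pair $A_n = A_n' \sqcup \iota(A_n')$ inside a fundamental domain for the $\iota$-action, and then define the fractional twists symmetrically on the two halves; the resulting classes are $\iota$-equivariant and so lie in $\mcg_\iota(\widetilde\Sigma)$. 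The main obstacle is to verify that symmetrizing does not collapse the $\bigoplus_{2^{\aleph_0}}\Q$-family: one needs to check that the $2^{\aleph_0}$-family of symmetrized twists remains $\Q$-linearly independent in the abelianization. Given the abundance of disjoint equivariant handle-pairs afforded by the infinite genus of $\widetilde\Sigma$ and the flexibility in the choice of subsurfaces in \cite{DD21}, this independence should follow by a careful but routine pairing argument.
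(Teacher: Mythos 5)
Your cover is the right one --- it is exactly the cover the paper uses, namely the complement $\Sigma_{PL}$ of the branch locus in the degree-$2$ branched cover $\Sigma_L \to \R^2$ --- but the direction in which you run the argument creates a problem you do not address. To descend a homomorphism from your equivariant group $\mcg_\iota(\widetilde\Sigma)$ down to $\mcg(\R^2\setminus\N)$ you need to know the kernel of $\mcg_\iota(\widetilde\Sigma)\to\mcg(\R^2\setminus\N)$ dies in $\bigoplus_{2^{\aleph_0}}\Q$, and your claim that this kernel is exactly $\langle[\iota]\rangle$ is a Birman--Hilden-type statement (an equivariant homeomorphism whose descent is isotopic to the identity must be \emph{equivariantly} isotopic to $\id$ or $\iota$). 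That is not automatic, particularly for infinite-type surfaces, and you give no argument; it is not even clear your $\mcg_\iota$ is well defined until you specify whether the isotopies are equivariant. The paper avoids this entirely by fixing marked points $\ast$, $\tilde\ast$ and going in the opposite direction: since the defining subgroup $K=\ker(\pi_1(\Sigma_F,\ast)\to\Z/2)$ is characteristic (mapping classes permute the simple loops about single punctures, which all map to $1$), covering space theory gives an honest lifting homomorphism $\mcg(\Sigma_F,\ast)\to\mcg(\Sigma_{PL},\tilde\ast)$ with no exactness claim needed; the marked point is removed only at the end via the Birman exact sequence and right-exactness of abelianization.

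The more serious gap is your last step, where the entire content of the proposition is deferred to ``a careful but routine pairing argument.'' Moreover, the result of \cite{DD21} that is actually needed concerns Dehn twists along a sequence of disjoint separating curves exiting the single end of the one-ended infinite-genus surface $\Sigma_L$ and separating it from a marked point --- not fractional handle shifts, and not the surface $\widetilde\Sigma=\Sigma_{PL}$ itself, which has infinitely many planar ends in addition to its genus end; the paper must compose with the forgetful map $\mcg(\Sigma_{PL},\tilde\ast)\to\mcg(\Sigma_L,\tilde\ast)$ filling in the punctures before quoting \cite{DD21}. The observation that makes the whole argument work, and which your sketch is missing, is that the curves $\gamma_n$ in that theorem can be chosen to be the connected preimages of simple closed curves $\alpha_n$ downstairs, so that $T_{\gamma_n}$ is precisely the lift of $T_{\alpha_n}^2$. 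Hence the image of the lifting homomorphism already contains the entire topologically generated subgroup $\overline{\langle T_{\gamma_n}\rangle}$ whose image in the abelianization contains $\bigoplus_{2^{\aleph_0}}\Q$, and no equivariant re-derivation of the Domat--Dickmann construction, nor any independence verification, is required. Finally, to convert ``the abelianization contains $\bigoplus_{2^{\aleph_0}}\Q$'' into ``surjects onto $\bigoplus_{2^{\aleph_0}}\Q$'' you need the divisibility argument of Lemma~\ref{lem:divisible}, which your proposal never invokes.
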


  For the proof of the proposition, we need the following fact about
  abelian groups, which follows from \cite[Theorem 21.3 and 23.1]{Fuc70}.

  \begin{lemma} \label{lem:divisible}

    Let $A$ be an abelian group. Suppose $A$ contains $\bigoplus_I \Q$ for
    some non-empty set $I$. Then $A$ surjects onto $\bigoplus_I \Q$.

  \end{lemma}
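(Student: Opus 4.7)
The plan is to invoke the classical structure theory of divisible abelian groups. The first step is to observe that $\Q$ is a divisible abelian group, i.e.\ every element is divisible by every positive integer, and that direct sums of divisible groups are again divisible. Hence the hypothesized subgroup $D := \bigoplus_I \Q \subseteq A$ is itself divisible.

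The key structural input is the classical fact (Fuchs, Theorem 21.3) that every divisible subgroup $D$ of an abelian group $A$ splits off as a direct summand, so that $A = D \oplus B$ for some subgroup $B \subseteq A$. Granted this, the projection $\pi \from A \to D$ onto the first summand is a surjective homomorphism from $A$ onto $D \cong \bigoplus_I \Q$, which is exactly what the lemma asserts.

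Strictly speaking, the statement asks for a surjection onto $\bigoplus_I \Q$ (not merely onto a subgroup of $A$ isomorphic to it), but composing $\pi$ with the given isomorphism $D \cong \bigoplus_I \Q$ resolves this bookkeeping. The role of Fuchs Theorem 23.1, which classifies divisible groups as direct sums of copies of $\Q$ and Pr\"ufer groups, is only needed if one wishes to re-derive the divisibility of $\bigoplus_I \Q$ abstractly; for our purposes the direct verification suffices.

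There is no real obstacle here: the entire argument rests on the splitting property of divisible subgroups, and once that is cited the rest is a one-line projection. The only thing to be careful about is not confusing divisibility of a subgroup with divisibility of the ambient group $A$ (which need not be divisible), and making sure we are applying the splitting lemma to $D$ inside $A$ rather than the other way around.
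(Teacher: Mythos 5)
Your proof is correct and matches the paper's approach, which simply cites Fuchs' Theorems 21.3 and 23.1: the subgroup $\bigoplus_I \Q$ is divisible, hence splits off as a direct summand, and the projection gives the desired surjection. Your remark that Theorem 23.1 is not strictly needed for this direct argument is also accurate.
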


  \begin{proof}[Proof of Proposition \ref{prop:flute}]

    Let $\Sigma_L$ be the infinite genus surface with one end. This admits
    a $2$-fold branched cover of $\R^2$ where $\R^2 = \Sigma_L/D$ and $D=
    \Z/2\Z$ acts by an involution. See Figure \ref{fig:branchcover}. More
    formally, one can construct this from gluing infinitely many copies of
    a $2$-fold branch cover of an annulus by a $2$-holed torus and one copy
    of a $2$-fold branched cover of a disc by a $1$-holed torus. Let
    $\Sigma_F$ be $\R^2$ with the branch points removed, i.e. $\Sigma_F
    \cong \R^2 \setminus \N$, and let $\Sigma_{PL}$ be $\Sigma_L$ with the
    branch points removed. Then, $\Sigma_{PL} \to \Sigma_F$ is a regular
    degree $2$ cover with deck group $D$.
    
    \begin{figure} [htp!] 
      \begin{center}
       \includegraphics{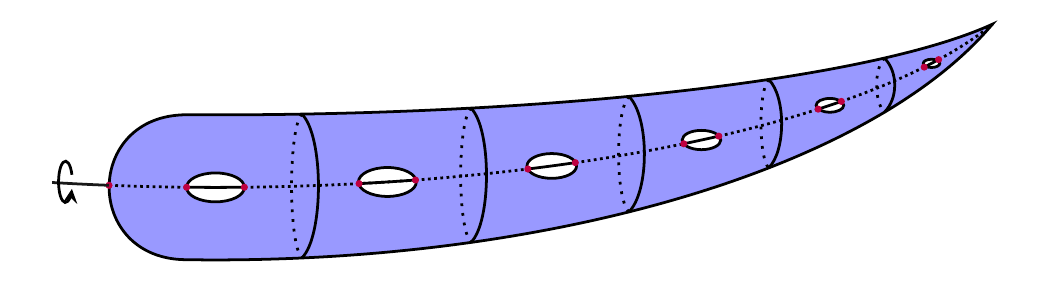} 
       \end{center} 
       \caption{The surface $\Sigma_L$ admits an involution which gives
       a degree 2 branched cover of $\R^2$ branched at the red points.}
      \label{fig:branchcover} 
    \end{figure}

    Choose marked points $\tilde{\ast} \in \Sigma_{PL}$ and $\ast \in
    \Sigma_F$. We first show that there is a lifting homomorphism
    $\mcg(\Sigma_F, \ast) \to \mcg(\Sigma_{PL}, \tilde{\ast})$, defined by
    lifting representative homeomorphisms. Since these are mapping class
    groups fixing marked points, it is a straightforward consequence of
    covering space theory that such a homomorphism exists and is unique
    provided the action of $\mcg(\Sigma_F, \ast)$ preserves the subgroup $K
    = \ker(\pi_1(\Sigma_F, \ast) \to D)$. Since the punctures of $\Sigma_F$
    came from branched points of degree $2$, any simple loop in
    $\pi_1(\Sigma_F, \ast)$ which encloses a single puncture does not lift
    to a closed curve in $\Sigma_{PL}$ and so must map to the nontrivial
    element of $D$. We can choose a basis $\{\beta_n\}_{n \in \N}$ of the
    free group $\pi_1(\Sigma_F, \ast)$ consisting entirely of simple loops
    each enclosing a single puncture. Consequently, $K$ consists precisely
    of those even length words in this generating set. For any mapping
    class $f \in \mcg(\Sigma_F, \ast)$, the set $\{f(\beta_n)\}_{n \in \N}$
    is also another generating set of simple loops enclosing single
    punctures, and for the same reasons $K$ consists of words of even
    length in these generators. It is clear then that $f(K) = K$.
    
    We now have a lifting homomorphism $\mcg(\Sigma_F, \ast) \to
    \mcg(\Sigma_{PL}, \tilde{\ast})$. Since the points deleted from
    $\Sigma_L$ are isolated, $\mcg(\Sigma_{PL}, \tilde{\ast})$ preserves
    that set of ends, and we have a well-defined forgetful map
    $\mcg(\Sigma_{PL}, \tilde{\ast}) \to \mcg(\Sigma_L, \tilde{\ast})$. In
    \cite{DD21}, explicit mapping classes are constructed which project to
    nontrivial elements in the abelianization of $\mcg(\Sigma_L,
    \tilde{\ast})$. (See \cite[Theorem 6.1]{DD21}.) Specifically, if
    $\{\gamma_n\}_{n \in \N}$ is a sequence of distinct, pairwise disjoint,
    separating, simple closed curves where each $\gamma_n$ separates the
    marked point from the single end of $\Sigma_L$, then the subgroup
    topologically generated by the twists $\{T_{\gamma_n}\}_{n \in \N}$
    projects to a group containing a $\bigoplus_{2^{\aleph_0}} \Q$. One can
    easily find such $\gamma_n$ which double cover simple closed curves
    $\alpha_n$ in $\Sigma_F$, and so $T_{\gamma_n}$ is the lift of
    $T_{\alpha_n}^2$. (E.g. one can choose $\alpha_1$ to be a simple closed
    curve bounding a disc with the marked point and three punctures
    and then choose $\alpha_i, \alpha_{i+1}$ to always bound an annulus
    with two punctures. Then $\gamma_i$ are the preimages of the $\alpha_i$
    under the covering map.) Thus, $\mcg(\Sigma_F, \tilde{\ast})$ maps onto the
    same abelian group (generated by the $T_{\gamma_n}$). I.e.
    $\HH_1(\mcg(\Sigma_F, \tilde{\ast}); \Z)$ has a quotient $A$ containing
    $\bigoplus_{2^{\aleph_0}} \Q$. By Lemma \ref{lem:divisible}, $A$ maps
    onto $\bigoplus_{2^{\aleph_0}} \Q$, so we also get a surjection
    $\varphi: \HH_1(\mcg(\Sigma_F, \tilde{\ast}); \Z) \to
    \bigoplus_{2^{\aleph_0}} \Q$.

    To pass to $\mcg(\Sigma_F)$, we borrow a technique from \cite{DD21}.
    Consider the Birman short exact sequence (see \cite{DD21}) 
    $$ 1 \longrightarrow
    \pi_1(\Sigma_F, \ast) \longrightarrow \mcg(\Sigma_F, \tilde{\ast})
    \longrightarrow \mcg(\Sigma_F) \longrightarrow 1.$$
    Abelianization is right exact, so we get the commutative diagram 
    $$ 
    \begin{tikzcd}
      \bigoplus_{_{\aleph_0}} \Z \arrow[r] \arrow[d,"\id"] 
      & 
      \HH_1(\mcg(\Sigma_F, \tilde{\ast}); \Z) \arrow[r] \arrow[d, "\varphi"] 
      &
      \HH_1( \mcg(\Sigma_F); \Z) \arrow[r] \arrow[d,"\bar\varphi"] & 0\\
      \bigoplus_{_{\aleph_0}} \Z \arrow[r] 
      & 
      \bigoplus_{2^{\aleph_0}} \Q \arrow[r]
      &
      P \arrow[r]
      & 0
    \end{tikzcd}
    $$ 
    
    The image of $\bigoplus_{_{\aleph_0}} \Z$ in $\bigoplus_{2^{\aleph_0}}
    \Q$ still misses a copy of $\bigoplus_{2^{\aleph_0}} \Q$, so the
    quotient $P$ still contains a copy of $\bigoplus_{2^{\aleph_0}} \Q$.
    The map $\bar\varphi$ is surjective, so we can conclude $\HH_1(
    \mcg(\Sigma_F); \Z)$ surjects onto $\bigoplus_{2^{\aleph_0}} \Q$, again
    by Lemma \ref{lem:divisible}. \qedhere

  \end{proof}

  We now produce many more classes of examples by building surfaces that
  naturally map onto the one-ended infinite genus surface $\Sigma_L$ or
  $\Sigma_F = \R^2 \setminus \N$. 

  \begin{theorem} \label{thm:bigabel}

    Suppose $\Sigma$ is a surface of one of the following two types. 
    \begin{itemize}
      \item[(1)] $E(\Sigma)$ has exactly one end accumulated by genus.
      \item[(2)] $\Sigma$ has genus $0$ and $E(\Sigma)$ has
        one maximal end $y$, such that in the partial order on $[E]$, the
        class of $y$ has an immediate predecessor $E(x)$ with countably
        infinite cardinality. 
    \end{itemize}

    Then $\mcg(\Sigma)$ maps onto $\bigoplus_{2^{\aleph_0}} \Q$. 

  \end{theorem}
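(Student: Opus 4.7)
The strategy is, in each case, to produce a surjection $\mcg(\Sigma) \to \bigoplus_{2^{\aleph_0}} \Q$ by reducing to known results: \cite[Theorem 6.1]{DD21} of Domat and Dickmann for $\Sigma_L$ in case (1), and Proposition \ref{prop:flute} for $\R^2 \setminus \N$ in case (2). In both cases, the distinguished end(s) appearing in the hypothesis are topologically characterized and hence preserved by $\mcg(\Sigma)$, which is what permits the reduction.

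In case (1), the unique genus-accumulated end $e$ is fixed by every element of $\mcg(\Sigma)$. I would extract a cofinal nested sequence of subsurfaces $V_1 \supseteq V_2 \supseteq \cdots$, each with a single connected compact boundary curve $\gamma_n$, whose end spaces form a clopen neighborhood basis for $e$. Since $e$ is accumulated by genus, one can arrange each shell $V_n \setminus V_{n+1}$ to carry positive genus, reproducing the ``flute-like'' configuration on which Domat and Dickmann build their homomorphism near the unique end of $\Sigma_L$. Their construction of that homomorphism only depends on a neighborhood of the genus-accumulated end, so it carries over verbatim to $\mcg(\Sigma)$: the topologically infinite products of the twists $T_{\gamma_n}$ detect a copy of $\bigoplus_{2^{\aleph_0}} \Q$ inside $\HH_1(\mcg(\Sigma);\Z)$, and Lemma \ref{lem:divisible} upgrades this to the desired surjection.

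For case (2), since $\Sigma$ has genus $0$ there are no genus-bounding twists available, so I would mimic the branched cover argument of Proposition \ref{prop:flute}. The end $y$ is fixed and $E(x)$ is setwise preserved by $\mcg(\Sigma)$. The definition of the preorder together with the maximality of $y$ and the fact that $y \notin E(x)$ force $y$ to lie in the closure of $E(x)$, so I can pick a countably infinite sequence $\{x_n\} \subseteq E(x)$ with $x_n \to y$. I would then fill in the $x_n$ as interior points of an auxiliary compactification and construct a $\Z/2\Z$-branched cover branched precisely at them, yielding $\tilde\Sigma \to \Sigma$ in analogy with $\Sigma_L \to \R^2$. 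The resulting surface $\tilde\Sigma$ has infinite genus and, since the branching accumulates only at $y$, exactly one end accumulated by genus (the preimage of $y$), so $\tilde\Sigma$ falls into case (1). I would then transfer the surjection $\mcg(\tilde\Sigma) \to \bigoplus_{2^{\aleph_0}} \Q$ back to $\mcg(\Sigma)$ via a lifting homomorphism and the Birman short exact sequence, following the proof of Proposition \ref{prop:flute} line by line.

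The main obstacle lies in case (2): justifying the branched cover construction when the $x_n$ are not necessarily isolated ends of $\Sigma$ (in contrast to the isolated punctures of $\R^2 \setminus \N$), and verifying that the resulting $\tilde\Sigma$ has only one genus-accumulated end so that case (1) applies. One must also establish well-definedness of the lifting homomorphism $\mcg(\Sigma, \ast) \to \mcg(\tilde\Sigma, \tilde\ast)$ by showing the monodromy kernel is $\mcg(\Sigma,\ast)$-invariant, following the generating-set argument of Proposition \ref{prop:flute} that identifies this kernel as the even-length words in a canonical basis of small loops each encircling one $x_n$.
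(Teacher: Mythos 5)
There are genuine gaps in both cases. In case (1), your plan is to rerun the Domat--Dickmann construction directly on $\mcg(\Sigma)$, asserting that it ``only depends on a neighborhood of the genus-accumulated end'' and so ``carries over verbatim.'' That assertion is precisely the hard point, and it is not justified: the functionals of \cite{DD21} are constructed on (pure) mapping class groups, and the reason they give information about $\mcg(\Sigma_L)$ is the special fact that $\mcg(\Sigma_L)=\pmcg(\Sigma_L)$. A surface of type (1) generally has a large end space on which $\mcg(\Sigma)$ acts nontrivially, the natural map $\HH_1(\pmcg(\Sigma);\Z)\to\HH_1(\mcg(\Sigma);\Z)$ need not be injective, and a priori conjugation by non-pure classes could collapse the twists $T_{\gamma_n}$ in the abelianization of the full group. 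The paper avoids this entirely by producing a homomorphism defined on all of $\mcg(\Sigma,\ast)$: the forgetful map $(\Sigma,\ast)\to(\Sigma_L,\ast)$ that deletes every end except the unique genus-accumulated one (well defined on mapping class groups because that end is canonical), together with an explicit model of $\Sigma$ in which one exhibits disjoint curves $\alpha_n$ mapping to a good sequence on $\Sigma_L$; the surjection is then the composite with the known one for $\mcg(\Sigma_L,\ast)$, followed by the Birman exact sequence and Lemma \ref{lem:divisible} to drop the marked point.

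In case (2) the gap is worse: the branched cover of $\Sigma$ branched ``at the $x_n$'' cannot be built in general, because the points of $E(x)$ need not be isolated planar ends of $\Sigma$. For example, if $E(\Sigma)\cong\omega^2+1$ then every point of $E(x)$ is itself an accumulation point of isolated ends, and a non-isolated end cannot be filled in as an interior point of any compactification that is still a surface, so there is nothing to branch over. You flag this as ``the main obstacle'' but offer no way around it, and the construction does not survive it. The paper goes in the opposite direction: it forgets all ends except $y$ and $E(x)$. The key verification is that $y$ is the \emph{only} accumulation point of $E(x)$ (if $E(x)$ had another accumulation point it would be equivalent to $x$, forcing $E(x)\cup\{y\}$ to be perfect and hence a Cantor set, contradicting countability), so the quotient end space is a convergent sequence and the target surface is exactly $\Sigma_F=\R^2\setminus\N$; well-definedness of the induced map on mapping class groups uses that mapping classes preserve equivalence classes of ends, and then Proposition \ref{prop:flute} and a pushed-forward good sequence of curves finish the argument as in case (1). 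I recommend replacing both of your reductions with forgetful maps in this way.
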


  \begin{proof}

    Choose a marked point $\ast$ on $\Sigma$. Note that by the same trick
    of using the Birman short exact sequence $$ 1 \longrightarrow
    \pi_1(\Sigma, \ast) \longrightarrow \mcg(\Sigma, \ast) \longrightarrow
    \mcg(\Sigma) \longrightarrow 1,$$ it is enough to show the
    abelianization of $\mcg(\Sigma,\ast)$ maps onto
    $\bigoplus_{2^{\aleph_0}} \Q$.

    The statement for the one-ended infinite genus surface $\Sigma_L$ is by
    \cite{DD21}. The statement for $\Sigma_F = \R^2 \setminus \N$ is
    Proposition \ref{prop:flute}. For all other case, we will consider an
    appropriate map to one of these two surfaces.

    On $\Sigma_L$, we will say a sequence of simple closed curves
    $\set{\gamma_n}_{n \in \N}$ is \emph{good} if the curves are distinct,
    pairwise disjoint, separating, and each curve separates the maximal end
    of $\Sigma_L$ from the marked point. On $\Sigma_F$, a sequence of
    curves $\set{\alpha_n}_{n \in \N}$ is \emph{good} if under the covering
    map $(\Sigma_L,\ast) \to (\Sigma_F,\ast)$, each $\alpha_n$ is double
    covered by a curve $\gamma_n$ and the sequence $\{\gamma_n\}$ is good.
    By \cite{DD21} and the proof of Proposition \ref{prop:flute}, the
    subgroup topologically generated by Dehn twists about a good sequence
    of curves maps onto $\bigoplus_{2^{\aleph_0}} \Q$ under the map to the
    abelianization of the mapping class group.

    First, assume $\Sigma$ is of the first type. The proof in the other
    case will be similar. The assumption on $\Sigma$ means we have a map
    $(\Sigma,\ast) \to (\Sigma_L,\ast)$ by forgetting all but the only end
    accumulated by genus. This induces a well-defined map
    $\mcg(\Sigma,\ast) \to \mcg(\Sigma_L,\ast)$, since this end is
    invariant under $\mcg(\Sigma, \ast)$. By the previous paragraph, it is
    enough to exhibit a sequence of pairwise disjoint curves
    $\set{\alpha_n}_{n \in \N}$ on $\Sigma$ whose image under the forgetful
    map forms a good sequence on $\Sigma_L$. To do this we will represent
    $\Sigma$ in an explicit way as described below. 
    
    \begin{figure} [htp!] 
      \begin{center}
       \includegraphics{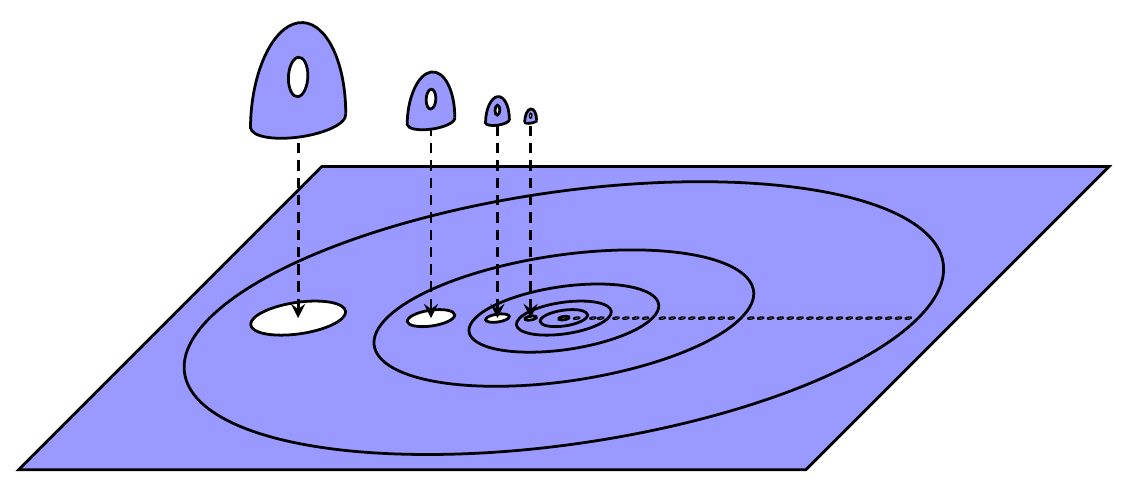} 
       \end{center} 
       \caption{Building $\Sigma$ of the first type.}
       \label{fig:lochends} 
    \end{figure}

    Identify $S^2 = \R^2\cup \set{\infty}$ with base point $\infty$. We
    will construct $\Sigma$ from $S^2$ by removing points from $\R^2
    \subset S^2$ and attaching handles appropriately. Let $K \subset [0,1]$
    be the standard Cantor set. Recall $E(\Sigma)$ is homeomorphic to a
    closed subset of $K$. Since the homeomorphism group of $K$ acts
    transitively, we can realize $E(\Sigma)$ as a closed subset $E \subset
    K$ with the only end accumulated by genus at $0$. By \cite{Ric63}, the
    ends space of $S^2-E$ is homeomorphic to $E$. It remains to attach
    handles to $\R^2 - E$ so that the handles will only accumulate onto the
    origin. To this end, choose a sequence $\set{y_n}_{n \in \N} \subset
    [0,1] - K$ such that $y_n \to 0$ monotonically. In particular,
    $\set{y_n} \cap E = \emptyset$. Let $d_n = y_n-y_{n+1}$. For each $n$,
    let $T_n$ be a torus with one boundary component. Let $z_n$ be the
    midpoint of $[y_{n+1},y_n]$. In $\R^2$, let $p_n = (-z_n, 0)$, and
    $B_n$ be the open ball of diameter $d_n/2$ centered at $p_n$. Now
    remove each $B_n$ from $\R^2$ and attach $T_n$ by gluing $\partial T_n$
    to $\partial B_n$. Let $\Sigma'$ be the resulting surface with marked
    point $\infty$. By construction, the tori accumulate only onto the
    origin. It follows then by the classification of surfaces, $\Sigma'$ is
    homeomorphic to $\Sigma$, and we can make this homeomorphism take
    $\infty$ to $\ast$. By filling in all of $E$ except the origin, we get
    a marked surface $(\Sigma_L',\infty)$ homeomorphic to
    $(\Sigma_L,\ast)$, and a representation of the forgetful map
    $(\Sigma,\ast) \to (\Sigma_L,\ast)$. Using this picture, it is now easy
    to find the curves $\alpha_n$ which we take to be the circle of radius
    $y_n$ centered at the origin. By construction, the circles
    $\{\alpha_n\}$ avoid $E$, are pairwise disjoint, and each separates the
    origin from $\infty$. Furthermore, since there is a handle between two
    consecutive circles $\alpha_n$ and $\alpha_{n+1}$, namely $T_{n+1}$,
    these circles remain topologically distinct after filling in all of
    $E-\{0\}$. This finishes the proof in this case.

    Now suppose $\Sigma$ is of the second type. We first claim that, by
    forgetting all but the maximal end of $E(\Sigma)$ and the class $E(x)$
    of its immediate predecessor, we get a map $(\Sigma,\ast) \to
    (\Sigma_F,\ast)$. Taking the same approach as above, realize
    $E(\Sigma)$ as a closed subset $E \subset K$ with the maximal end at
    the origin. By Richards, the surface $(S^2\setminus E,\infty)$ is
    homeomorphic to $(\Sigma,\ast)$. We claim the origin is the only
    accumulation point of $E(x)$. Since $E(x)$ has no successor except the
    origin, any other accumulation point of $E(x)$ must be equivalent to
    $x$. But then every point in $E(x)$ is an accumulation point of $E(x)$.
    This makes $E(x) \cup \{0\}$ a closed and perfect subset of $K$, so it
    is homeomorphic to $K$, contradicting our assumption that $E(x)$ has
    countable cardinality. Thus, for any compact interval $I \subset
    (0,1]$, $I\cap E(x)$ has finite cardinality, so we can enumerate $E(x)$
    as a decreasing sequence $\{x_n\}_{n \in\N} \subset E$ converging to
    $0$. This shows $\left( S^2 \setminus (E(x)\cup \{0\} ), \infty \right)
    \cong (\Sigma_F,\ast)$. Since mapping classes induce homeomorphisms
	of the ends space which, as noted in Section \ref{sec:preliminaries}, 
	preserve the equivalence class of each end, 
	we obtain a well-defined map $\mcg(\Sigma,
    \ast) \to \mcg(\Sigma_F, \ast)$. To finish, take any point $y_n \in
    [x_{n+1},x_n]$ such that $\set{y_n} \cap E = \emptyset$.  Then the
    circles $\{\alpha_n\}_{n \in \N}$ of radius $y_n$ centered at the
    origin are pairwise disjoint and we can extract from them a subsequence
    that project to a good sequence of curves on $\Sigma_F$. This finishes
    the proof. \qedhere

  \end{proof}
  
  \begin{remark}

    Note that for $\Sigma$ of the second type in Theorem \ref{thm:bigabel},
    we do not need the maximal end $y$ to have a unique immediate
    predecessor. This is because the mapping class group always preserves
    equivalence classes of ends, so even if $y$ has other immediate
    predecessors, the map forgetting all ends except $y$ and $E(x)$ still
    induces a well-defined homomorphism on the level of mapping class
    groups. 
    
  \end{remark}

  \begin{remark}
    
    In our setting above, it seems plausible that the forgetful map from
    $\mcg(\Sigma)$ to either $\mcg(\Sigma_L)$ or $\mcg(\Sigma_F)$ is
    surjective, but we will not pursue that statement here.

  \end{remark}

  We record some consequences of Theorem \ref{thm:bigabel}.

  \begin{corollary} \label{cor:bigabel}

    Suppose $\Sigma$ is a surface that satisfies one of the descriptions in
    Theorem \ref{thm:bigabel}. Let $\ast \in \Sigma$ be a marked point. Let
    $G$ be either $\Homeo^+(\Sigma)$, $\mcg(\Sigma)$,
    $\Homeo^+(\Sigma,\ast)$, or $\mcg(\Sigma,\ast)$. Then $G$ is not
    perfect, is not generated by torsion elements, and does not have the
    automatic continuity property.

  \end{corollary}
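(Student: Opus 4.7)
The plan is to derive all three conclusions from the large abelian quotient produced by Theorem~\ref{thm:bigabel}. Its proof constructs a surjection $\mcg(\Sigma,\ast) \twoheadrightarrow A := \bigoplus_{2^{\aleph_0}} \Q$ and, via the Birman exact sequence, a surjection $\mcg(\Sigma) \twoheadrightarrow A$. Post-composing with the continuous quotient maps $\Homeo^+(\Sigma,\ast) \twoheadrightarrow \mcg(\Sigma,\ast)$ and $\Homeo^+(\Sigma) \twoheadrightarrow \mcg(\Sigma)$, we obtain, for each of the four candidate groups $G$ in the statement, a surjective homomorphism $\Phi_G \colon G \twoheadrightarrow A$.

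The first two conclusions are now immediate. Since $A$ is abelian and nonzero, the abelianization of $G$ surjects onto $A$ and is in particular nontrivial, so $G$ is not perfect. Since $A$ is torsion-free and nonzero, $A$ cannot be generated by torsion elements, so neither can $G$: the image of any torsion generating set of $G$ under $\Phi_G$ would give one for $A$.

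For failure of automatic continuity, the plan is to first handle the mapping class groups by exhibiting a discontinuous homomorphism onto a countable (hence separable) topological group; the homeomorphism-group cases then follow because AC is preserved under continuous surjections between Polish groups (by the Polish open mapping theorem, such a surjection is a quotient map). The proof of Theorem~\ref{thm:bigabel} uses a sequence $\{\gamma_n\}_{n \in \N}$ of pairwise disjoint separating simple closed curves escaping to the distinguished end of $\Sigma$. Because each $\gamma_n$ eventually avoids every compact subset, the Dehn twists $T_{\gamma_n}$ converge to the identity in the compact-open topology, and this convergence descends to $\mcg(\Sigma,\ast)$ and $\mcg(\Sigma)$. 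The \cite{DD21}-style construction ensures that the classes $[T_{\gamma_n}]$ are $\Z$-linearly independent in $\mcg(\Sigma,\ast)^{\mathrm{ab}}$, so one can define a homomorphism $\eta \colon \langle [T_{\gamma_n}] \rangle \to \Q$ sending every $[T_{\gamma_n}]$ to $1$ and, since $\Q$ is an injective abelian group, extend it to all of $\mcg(\Sigma,\ast)^{\mathrm{ab}}$. Equipping $\Q$ with the discrete topology makes it a countable separable topological group, and the composite $\mcg(\Sigma,\ast) \to \Q$ sends the null sequence $(T_{\gamma_n})$ to the constant $1$; hence it is a discontinuous homomorphism to a separable group, contradicting AC. The same argument, with $\Sigma$ in place of $(\Sigma,\ast)$, handles $\mcg(\Sigma)$.

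The main technical point is the $\Z$-linear independence of $\{[T_{\gamma_n}]\}$ in the abelianization. For $\Sigma_L$ and $\Sigma_F = \R^2 \setminus \N$ this is essentially the content of \cite[Theorem~6.1]{DD21} and the proof of Proposition~\ref{prop:flute}; for the more general $\Sigma$ in Theorem~\ref{thm:bigabel}, independence is pulled back along the forgetful maps to $\Sigma_L$ or $\Sigma_F$ constructed in that proof, the circles $\{\alpha_n\}$ constructed there serving as the $\{\gamma_n\}$ for $\Sigma$.
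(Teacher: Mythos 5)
Your handling of the first two conclusions is correct and matches the paper: the surjection onto $A=\bigoplus_{2^{\aleph_0}}\Q$ from Theorem~\ref{thm:bigabel}, pre-composed with $\Homeo^+\to\mcg$ where needed, immediately rules out perfectness and torsion generation. The automatic continuity part, however, has a genuine gap, and the step at fault is the claim that the classes $[T_{\gamma_n}]$ are $\Z$-linearly independent in the abelianization. For the surfaces of type (1) in Theorem~\ref{thm:bigabel} (in particular for $\Sigma_L$ itself) this claim is false: each $T_{\gamma_n}$ is compactly supported, hence lies in $\mcg(S,\partial S)$ for a compact subsurface $S$ of genus at least $3$, and such mapping class groups are perfect, so every $[T_{\gamma_n}]$ is \emph{zero} in $\HH_1(\mcg(\Sigma,\ast);\Z)$. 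This vanishing is not a technicality to be repaired --- it is the engine of the Domat--Dickmann construction: the elements with nontrivial (and divisible, hence $\Q$-valued) image in the abelianization are the infinite products $\prod_n T_{\gamma_n}^{a_n}$, which survive precisely because their compactly supported partial products die; the cited Theorem 6.1 of \cite{DD21} concerns the image of the topological closure $\overline{\langle T_{\gamma_n}\rangle}\cong\Z^{\N}$, not of the individual twists. Consequently your functional $\eta$ cannot be defined as stated, and the null sequence $(T_{\gamma_n})$ is useless for detecting discontinuity, since every homomorphism to $\Q$ kills it.

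The paper avoids all of this with a cardinality argument: $\bigoplus_{2^{\aleph_0}}\Q$ admits $2^{\mathfrak{c}}$ epimorphisms onto $\Q$, whereas a separable topological group admits at most $\mathfrak{c}$ continuous maps to a countable discrete group (a continuous map is determined by its values on a countable dense set). Hence some epimorphism $G\to\Q$ is discontinuous, and $\Q$ with the discrete topology is separable, so automatic continuity fails; the same composite surjection handles $\Homeo^+(\Sigma)$ directly, without needing openness of the quotient map. If you want an explicit witness in your style, you would have to use partial products $P_N=\prod_{n\le N}T_{\gamma_n}^{a_n}$ converging to $P_\infty$ together with a functional $\eta$ satisfying $\eta([P_\infty])\ne 0$: then $P_\infty P_N^{-1}\to\id$ while $\eta([P_\infty P_N^{-1}])=\eta([P_\infty])$ is constant and nonzero. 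But certifying such an $\eta$ again reduces to the structure of the image of $\Z^{\N}$ in the abelianization, and the counting argument is both shorter and strictly easier.
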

  
  \begin{proof}
    
    Since a Polish group is separable, it can have at most
    $\mathfrak{c}=2^{\aleph_0}$ continuous epimorphisms to $\Q$. But
    $\bigoplus_{2^{\aleph_0}} \Q$ has $2^{\mathfrak{c}}$ epimorphisms to
    $\Q$. So, by Theorem \ref{thm:bigabel}, $\mcg(\Sigma)$ is not perfect,
    is not generated by torsion elements, and does not have the automatic
    continuity property. These three properties are inherited by quotients,
    so $\Homeo^+(\Sigma)$ also cannot have any of these properties. The
    same argument applies to a marked $\Sigma$. \qedhere 

  \end{proof}

  \begin{remark}

    If $E$ is a countable ends space homeomorphic to $\omega^\alpha + 1$,
    for some countable successor ordinal $\alpha$, then $\Sigma =S^2
    \setminus E$ is a surface of type $2$ of Theorem \ref{thm:bigabel}, by
    \cite[Proposition 4.3]{MR21}. This gives Theorem \ref{introthm4} of the
    introduction.

  \end{remark}

  \subsection{Topological generation by involutions}

  \begin{theorem} \label{thm:topgen}

    Let $\Sigma$ be either $\R^2 \setminus \N$ or the infinite genus
    surface with one end. Then $\mcg(\Sigma)$ is topologically generated by
    involutions and is the topological closure of the normal closure of a
    single involution. Consequentially, $\HH^1(\mcg(\Sigma),\Z)=0$.

  \end{theorem}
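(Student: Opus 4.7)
The plan is to extend the ``translation plus swindle'' scheme of Section \ref{sec:genunmarked} to the weaker setting of topological generation. I realize $\Sigma$ in an explicit symmetric model that admits a translation-like homeomorphism arising as a product of conjugate involutions. For $\Sigma = \R^2 \setminus \N$, place the punctures at the integer points on the $x$-axis; for $\Sigma = \Sigma_L$, attach a symmetric handle at each integer on the $x$-axis. In both cases, the $180^\circ$ rotation $\tau_p$ about any regular point $p$ on the $x$-axis is an orientation-preserving involution with a single fixed point, so any two such rotations are conjugate in $\mcg(\Sigma)$, and their composition $\tau_{p'}\tau_p$ is the horizontal translation by $2(p'-p)$. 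Setting $\tau := \tau_p$, this exhibits a nontrivial translation $T^k \in \langle\langle \tau \rangle\rangle$, with $k = 1$ for $\Sigma_L$ and $k = 2$ for $\R^2 \setminus \N$ (where rotation centers must avoid punctures). In the $\Sigma_L$ model, $T$ additionally serves as a handle shift.

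Given any compact subsurface $K \subset \Sigma$, for sufficiently large $m$ the iterates $\{T^{mkn}(K)\}_{n \in \Z}$ are pairwise disjoint, and $T^{mk}$ is again a product of two $\tau_p$'s, hence lies in $\langle\langle \tau \rangle\rangle$. Lemma \ref{lem:swindle} applied with $T^{mk}$ and $K$ in place of $\varphi$ and $H$ gives $\mcg(K, \partial K) \subset \langle\langle \tau \rangle\rangle$; in particular every Dehn twist lies in $\langle\langle \tau \rangle\rangle$. Together with the handle shift $T$ (for $\Sigma_L$), these constitute a topological generating set for $\pmcg(\Sigma)$ by the Patel--Vlamis framework \cite{APV20}, so $\pmcg(\Sigma) \subset \overline{\langle\langle \tau \rangle\rangle}$. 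For $\Sigma_L$ this suffices since $\mcg(\Sigma_L) = \pmcg(\Sigma_L)$. For $\Sigma = \R^2 \setminus \N$, the action on ends yields a short exact sequence
\[ 1 \longrightarrow \pmcg(\Sigma) \longrightarrow \mcg(\Sigma) \longrightarrow \Sym(\N) \longrightarrow 1, \]
and the image of $\tau$ in $\Sym(\N)$ is a non-finitary involution. Since all proper normal subgroups of the full infinite symmetric group are finitary, the image of $\langle\langle \tau \rangle\rangle$ in $\Sym(\N)$ is everything, and combining with $\pmcg(\Sigma) \subset \overline{\langle\langle \tau \rangle\rangle}$ yields $\overline{\langle\langle \tau \rangle\rangle} = \mcg(\Sigma)$.

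The vanishing $\HH^1(\mcg(\Sigma), \Z) = 0$ then follows: any homomorphism $\phi \colon \mcg(\Sigma) \to \Z$ satisfies $2\phi(\tau) = \phi(\tau^2) = 0$ and so $\phi(\tau) = 0$; since homomorphisms from a Polish group to $\Z$ are automatically continuous, $\phi$ vanishes on the closure $\overline{\langle\langle \tau \rangle\rangle} = \mcg(\Sigma)$. The main obstacle I anticipate is the topological density step: one must identify the correct topological generating set for $\pmcg(\Sigma)$ in each case (Dehn twists plus handle shifts for $\Sigma_L$; Dehn twists for $\R^2 \setminus \N$) and confirm, via the swindle together with the explicit translation construction, that all of these generators lie in $\langle\langle \tau \rangle\rangle$.
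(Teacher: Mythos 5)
Your proposal is correct and follows the same overall strategy as the paper's proof: realize $\Sigma$ in a symmetric model in which a product of two conjugate involutions is a translation, run the swindle of Lemma \ref{lem:swindle} to place Dehn twists in $\langle \langle \tau \rangle \rangle$, invoke the Patel--Vlamis result that Dehn twists topologically generate $\pmcg(\Sigma)$, and treat the action on ends separately. Two local differences are worth recording. First, for the Dehn-twist step the paper fixes a single non-compact half-space $H$ disjoint from all its $\phi^{2n}$-translates, applies the swindle once to get all of $\mcg(H,\partial H)$, and then conjugates an arbitrary twist into $H$ by a change-of-coordinates argument; you instead apply the swindle directly to an arbitrary compact subsurface $K$, choosing a power $T^{mk}$ of the translation depending on $K$ so that the iterates of $K$ are pairwise disjoint. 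Both are valid; yours dispenses with the half-space and the change-of-coordinates step at the cost of letting the translation vary with the support. Second, for the quotient $\Sym(\N)$ the paper exhibits an explicit involution in $\mcg(H,\partial H)$ transposing infinitely many disjoint pairs of punctures and checks directly that such permutations generate; you observe that the image of $\tau$ itself is already a non-finitary involution and quote the classification of normal subgroups of the infinite symmetric group. This is correct (the image of a normal closure under the surjection $\mcg(\Sigma)\to\Sym(\N)$ is the normal closure of the image) and arguably cleaner. Two cosmetic slips that do not affect the argument: with punctures or handles at the integer points, admissible rotation centers lie at half-integers, so the translations you obtain are the even ones ($k=2$) for $\Sigma_L$ just as for $\R^2\setminus\N$; and since $\Sigma_L$ has exactly one end accumulated by genus, $\pmcg(\Sigma_L)$ is the closure of the compactly supported subgroup, so the handle shift is not actually needed in your generating set.
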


  \begin{proof}
    
    We first focus on $\Sigma = \R^2 \setminus \N$. The beginning of the
    proof is very similar to that of Theorem \ref{thm:unmarkedhomeo}. Note
    that $\R^2 \setminus \N$ is homeomorphic to $\R^2 \setminus \Z^2$. This
    is because both surfaces have genus $0$, and their ends spaces are
    homeomorphic. The advantage of viewing the surface as $\R^2 \setminus
    \Z^2$ is as follows. 
    
    Let $\tau$ be the rotation in the plane by angle $\pi$ centered at the
    origin, i.e. $\tau(x+iy) = e^{i\pi}(x+iy)$. We also have the
    translation $\phi(x+iy) = (x+1)+iy$. Both maps preserve $\Z^2$, so they
    induce homeomorphisms of $\Sigma$, where $\tau$ has order $2$. One
    checks that $[\phi,\tau] = \phi \tau \phi^{-1} \tau = \phi^2.$

    We define a \emph{half-space} of $\Sigma$ to be a closed subset $H
    \subset \Sigma$, such that $\partial H$ is a properly embedded simple
    arc joining infinity to itself, and both $H$ and $H^c$ contain
    infinitely many punctures (isolated ends) of $\Sigma$. We will consider
    an explicit half-space in $\Sigma$. Let $h(x) = \sec(\pi x)-.5$ with
    domain $(-.5,.5)$. The graph of $h(x)$ is a convex curve that misses
    all of $\Z^2$ and is contained in the vertical strip $\set{(x,y): -.5 \le
    x \le .5}$. See figure \ref{fig:flute}. The set $H= \{(x,y) \in \Sigma:
    y \ge h(x)\}$ is a half-space, and $\phi^2$ is an $H$--free translation,
    in the sense that $\{ \phi^{2n} H\}_{n \in \Z}$ are pairwise disjoint.
    Therefore, with the same swindle as before, we obtain 
    \[\Homeo(H,\partial H) \le \langle \langle \phi^2 \rangle \rangle \le
    \langle \langle \tau \rangle \rangle.\] 
    While the swindle still works, the rest of the proof for Theorem
    \ref{thm:unmarkedhomeo} does (and should) not work. The only statement
    that seems to fail is Lemma \ref{lemma:halfspaceintersections}, (and so
    Theorem \ref{thm:halfspacehomeogen} also fails in this case).
    
    \begin{figure}[htp!] 
      \begin{center}
       \includegraphics{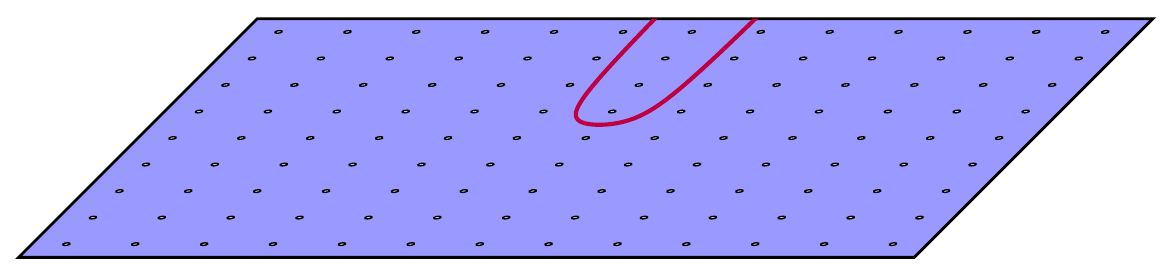} 
       \end{center} 
       \caption{The half-space $H$ in $\R^2 \setminus \Z^2$.}
      \label{fig:flute} 
    \end{figure}

    We now move to the mapping class group. As before, to simplify the
    discussion, we will conflate half-spaces and simple closed curves with
    their ambient isotopy classes. We will keep on denoting $\phi$ and
    $\tau$ for their mapping classes. 
    
    Consider the short exact sequence $$ 1 \to \pmcg(\Sigma) \to
    \mcg(\Sigma) \to \Homeo(E(\Sigma)) \to 1,$$ where $\pmcg(\Sigma)$ is
    called the \emph{pure mapping class group}, i.e.\ the subgroup fixing
    each end of $\Sigma$. Since $\Sigma$ has no genus, by \cite{PV18},
    $\pmcg(\Sigma) = \overline{\pmcg_c(\Sigma)}$, where $\pmcg_c(\Sigma)$
    is the subgroup of compactly supported mapping classes. Since Dehn
    twists generate the pure mapping class group of any compact surface,
    $\pmcg(\Sigma)$ is topologically generated by Dehn twists. The goal now
    is to show every Dehn twist in $\mcg(\Sigma)$ is contained in the
    normal closure of $\mcg(H, \partial H)$, and that the normal closure of
    $\mcg(H,\partial H)$ surjects onto $\Homeo(E(\Sigma))$ 
    
    We first deal with the Dehn twists. Let $\alpha \subset \Sigma$ be any
    simple closed curve. Then $\alpha$ bounds a topological disk containing
    finitely many points of $\Z^2$. Choose a simple closed curve $\beta
    \subset H$ that bounds an equal number of points of $\Z^2$. We can find
    a homeomorphism $f \in \Homeo^+(\Sigma)$, such that $f(\alpha)=\beta$.
    This is simply the change-of-coordinate principle made possible by the
    classification of surfaces. We now have  
    $$T_\alpha = T_{f^{-1}(\beta)} = f^{-1} T_\beta f \in \langle \langle 
    \mcg(H,\partial H) \rangle \rangle.$$

    To show $\langle \langle \mcg(H,\partial H) \rangle \rangle$ surjects
    onto $\Homeo(E(\Sigma))$, we produce sufficiently many permutations of
    non-maximal ends. First note that $E(\Sigma)$ has exactly one maximal
    end, represented by $\infty$, which must be invariant under any
    homeomorphism. Every other end is isolated, so $\Homeo(E(\Sigma))$ is
    nothing other than the permutation group $\Sym(\Z^2)$ on $\Z^2$. Within
    $H$, pair off infinitely many punctures/ends $\{(x_{i, 1}, x_{i,
    2})\}_{i \in I}$ such that $x_{i, 2}$ is directly above $x_{i, 1}$
    and all the pairs are pairwise disjoint.  It is clear that $\mcg(H,
    \partial H)$ contains a mapping class $f$ which transposes all pairs
    simultaneously. Note that $\bigcup_{i \in I}\{(x_{i, 1}, x_{i, 2})\}$ is both infinite and co-infinite in
    $E(\Sigma)$. Since by \cite{Ric63}, $\mcg(\Sigma)$ surjects onto
    $\Homeo(E(\Sigma)) = \Sym(\Z^2)$, the image of $\langle \langle
    \mcg(H,\partial H) \rangle \rangle$ in $\Sym(\Z^2)$ contains all order
    $2$ permutations supported on infinite, co-infinite subsets. It is
    straightforward to show this set generates $\Sym(\Z^2)$. In summary, we
    have shown $\langle \langle \mcg(H,\partial H) \rangle \rangle$
    topologically generates $\pmcg(\Sigma)$ and surjects onto
    $\Homeo(E(\Sigma))$. This yields
    \[ \mcg(\Sigma) = \overline{\langle \langle \mcg(H,\partial H) \rangle
    \rangle} = \overline{\langle \langle \phi^2 \rangle \rangle} =
    \overline{\langle \langle \tau \rangle \rangle}.\]
    
    To go from $\R^2 \setminus \Z^2$ to the one-ended infinite genus
    surface $\Sigma_L$ we observe that instead of removing the integer
    lattice points from $\R^2$, we can remove a small disk from each
    lattice point and glue on a handle to get a surface $\Sigma'$
    homeomorphic to $\Sigma_L$. Furthermore, we can make sure $\tau$ and
    $\phi$ preserve $\Sigma'$. A half-space in $\Sigma'$ is simply a closed
    component of a dividing arc that cuts off two component of infinite
    genus. The explicit half-space $H$ we defined for $\R^2 \setminus \Z^2$
    can also be made into a half-space here. Then running the same argument
    as above and observing that $\pmcg(\Sigma') = \mcg(\Sigma')$ completes
    the proof. 
    
    The last statement about the cohomology of these groups follows from
    the fact that any homomorphism from a Polish group to $\Z$ is
    automatically continuous \cite{Dud61}.\qedhere

  \end{proof}

 
  \bibliographystyle{abbrv} 
  \bibliography{biginvolutions.bib}


  \bigskip
  \bigskip
  \bigskip

  \begin{center}
    \begin{tabular}{|p{2.1in}@{\qquad\qquad\qquad}|p{2.1in}}
      Justin Malestein 
      \newline
      Department of Mathematics
      \newline
      University of Oklahoma
      \newline
      \texttt{justin.malestein@ou.edu}
      &
      Jing Tao 
      \newline
      Department of Mathematics
      \newline
      University of Oklahoma
      \newline
      \texttt{jing@ou.edu}
    \end{tabular}
  \end{center}

  \end{document}